\documentclass[12pt, reqno, a4paper]{amsart}
\usepackage[all]{xy}

\usepackage{ amssymb, amsmath, enumerate, amsfonts, amsthm, mathrsfs, url, bm, mathtools, comment}

\numberwithin{equation}{section}

\usepackage{xcolor}  	
\usepackage[backref]{hyperref}
\hypersetup{
	colorlinks,
    linkcolor={blue!60!black},
    citecolor={blue!60!black},
    urlcolor={red!60!black}
}

\usepackage{color}

\usepackage[margin=1.25in]{geometry}

\RequirePackage{doi}

\usepackage[square,sort,comma,numbers]{natbib}
\makeatletter
\@namedef{subjclassname@2020}{%
  \textup{2020} Mathematics Subject Classification}
\makeatother

 \newcommand{\set}[1]{\left\{#1\right\}}
\newcommand{\bigset}[1]{\bigl\{ #1 \bigr\}}

\newcommand{\abs}[1]{\left| #1\right|}
\newcommand{\bigabs}[1]{\bigl| #1 \bigr|}
\newcommand{\Bigabs}[1]{\Bigl| #1 \Bigr|}
\newcommand{\biggabs}[1]{\biggl| #1 \biggr|}
\newcommand{\Biggabs}[1]{\Biggl| #1 \Biggr|}

\newcommand{\brac}[1]{\left( #1 \right)}
\newcommand{\bigbrac}[1]{\bigl( #1 \bigr)}
\newcommand{\Bigbrac}[1]{\Bigl( #1 \Bigr)}
\newcommand{\biggbrac}[1]{\biggl( #1 \biggr)}
\newcommand{\Biggbrac}[1]{\Biggl( #1 \Biggr)}
\newcommand{\norm}[1]{\left\| #1\right\|}

\newcommand{\ang}[1]{\left\langle#1\right\rangle}

\newcommand{\rd}{\,\mathrm{d}}

\newcommand{\twosum}[2]{ \sum_{\substack{#1\\ #2}}}
\newcommand{\threesum}[3]{ \sum_{\substack{#1\\ #2\\ #3}}}

\newcommand{\N}{\mathbb{N}}
\newcommand{\Z}{\mathbb{Z}}

\newcommand{\R}{\mathbb{R}}
\newcommand{\C}{\mathbb{C}}
\newcommand{\T}{\mathbb{T}}

\newcommand{\E}{\mathbb{E}}

\newcommand{\sgn}{\mathrm{sgn}}

\newcommand{\meas}{\mathrm{meas}}

\newcommand{\supp}{\mathrm{supp}}

\newcommand{\eps}{\varepsilon}

\newcommand{\CB}{\mathcal{B}}

\newcommand{\poly}{\mathrm{poly}}
\newcommand{\lip}{\mathrm{Lip}}

\renewcommand{\mod}[1]{\,(\mathrm{mod}\,#1)}
\renewcommand{\bar}{\overline}

\renewcommand{\leq}{\leqslant}
\renewcommand{\geq}{\geqslant}
\renewcommand{\epsilon}{\varepsilon}

\newtheorem{theorem}{Theorem}[section]

\newtheorem{proposition}[theorem]{Proposition}
\newtheorem{lemma}[theorem]{Lemma}

\theoremstyle{definition}
\newtheorem{definition}[theorem]{Definition}
\newtheorem*{remark}{Remark}

\numberwithin{theorem}{section}

\author{Joni Ter\"av\"ainen}
\address{Department of Pure Mathematics and Mathematical Statistics,  University of Cambridge, CB3 0WB, UK}
\email{joni.p.teravainen@gmail.com}

\author{Mengdi Wang}
\address{\'{E}cole Polytechnique F\'{e}d\'{e}rale de Lausanne (EPFL), Lausanne, Switzerland}
\email{mengdi.wang@epfl.ch}

\begin{document}

\title{On the Green--Tao theorem for sparse sets}

\begin{abstract} We establish the following quantitative form of the Green--Tao theorem: if a set $\mathcal{A}$ of relative density $\delta$ within the primes up to $N$ contains no nontrivial arithmetic progressions of length $k\geq 4$, then $\delta\ll \exp(-(\log \log \log N)^{c_k})$ for some $c_k>0$. This improves on previous work of Rimani\'c and Wolf.

The main new ingredients in the proof are a version of the Leng--Sah--Sawhney quasipolynomial inverse theorem for unbounded functions and a dense model theorem with quasipolynomial dependencies, which may be of independent interest. 

\end{abstract}

\maketitle

\section{Introduction}

About two decades ago, Green and Tao~\cite{GT-prime} achieved a landmark result by proving that the primes contain arbitrarily long arithmetic progressions. This theorem can be viewed as a counterpart to Szemer\'edi's theorem on arithmetic progressions in the integers, and it naturally raises the question of obtaining quantitative bounds in the setting of primes. For 3-term arithmetic progressions (3-APs), Green's original work~\cite{G-trans} established that if a subset of\footnote{Here and throughout, $\mathbb{P}$ denotes the set of primes.} $\mathbb{P}\cap [N]$ contains no nontrivial 3-APs, then it has relative density $O((\log \log \log \log N)^{-c})$ for some $c>0$, which was later improved to a double-logarithmic decay by Helfgott--de Roton~\cite{HR}, and the power of the double logarithm was subsequently improved by Naslund~\cite{Nas}.

Before turning to the case of primes in detail, let us first recall the corresponding developments in the integers. A recent breakthrough of Kelley and Meka~\cite{KM} and its improvement by Bloom and Sisask~\cite{bloom-sisask} show that any subset $\mathcal A$ of $[N]\coloneqq  \set{1,2,\dots,N}$ that contains no nontrivial 3-APs must satisfy the density bound 
\[
\frac{|\mathcal A|}{N}\ll \exp\bigbrac{-c(\log N)^{1/9}}
\]
for some constant $c>0$. However, the Fourier-analytic methods used for these results become inadequate for progressions of length $k\geq 4$ and less is known about these higher-order cases. The current best bounds for sets $\mathcal A$ of $[N]$ lacking $k$-APs are
\begin{align}\label{eq:sz}
\frac{|\mathcal A|}{N}\ll\begin{cases}(\log N)^{-c_4},\quad &k=4,\\ \exp(-(\log \log N)^{c_k}),\quad &k\geq 5,\end{cases}    
\end{align}
due to Green--Tao~\cite{GT-poly-4aps} for $k=4$ and Leng--Sah--Sawhney~\cite{LSS2} for $k\geq 5$, improving on work of Gowers~\cite{Go}.

 In the case of polynomial progressions, Shao and the second author~\cite{SW} proved a logarithmic bound for the density of sets lacking configurations of the form $x,x+P_1(d),\dots,x+P_k(d)$ with $d\neq 0$, provided that the polynomials $P_1,\dots,P_k\in \Z[y]$ have distinct degrees,  thereby improving on an earlier result of Peluse~\cite{Pe}. See~\cite{Pre14},~\cite{PPS},~\cite{kuca},~\cite{peluse-sah-sawhney},~\cite{altman-sawhney} for some other results in the setting of polynomial progressions.

Since the primes have density $O((\log N)^{-1})$ within $[N]$ for large $N$, Bloom and Sisask's bound~\cite{bloom-sisask} immediately implies that any subset of primes up to $N$ with relative density $\exp\bigbrac{-c'(\log N)^{1/9}}$ for some constant $c'>0$ must contain nontrivial 3-APs. Nonetheless, the current quantitative versions of  Szemer\'edi's theorem remain far from establishing the existence of $k$-APs  in the primes when $k\geq 4$. It therefore remains important to investigate the quantitative bounds and the underlying techniques in the primes (and other sparse sets). In this direction, Rimani\'c and Wolf~\cite{RW} established the following result: if a set $\mathcal A\subseteq[N]\cap \mathbb P$ does not contain nontrivial $k$-APs, then 
\[
\frac{|\mathcal A|}{|[N]\cap \mathbb P|} \ll 
\begin{cases}
	 (\log \log \log N)^{-c_4} \qquad &\text{ when }k=4,\\
	(\log \log \log \log N)^{-c_k}  & \text{ when } k\geq 5,
\end{cases}
\] 
for  some small constant $c_k>0$. By inputting improved quantitative bounds for Szemer\'edi's theorem into~\cite{RW}, one can remove one logarithm from these bounds in the case $k\geq 5$. 

In this paper, we prove the following exponential improvement.

\begin{theorem}[Density bound]\label{density-bound}
Let $k\geq 4$ be a natural number, and let $N\geq 100$. Let $\mathcal A\subseteq[N]\cap \mathbb P$ be a subset of primes. If $\mathcal A$ does not contain any nontrivial $k$-term arithmetic progression then for some $c_k>0$ we have
\[
\frac{|\mathcal A|}{|[N]\cap \mathbb P|} \ll \begin{cases}(\log \log N)^{-c_4}\quad &\text{ when }k=4,\\ \exp(-(\log \log \log N)^{c_k}),\quad &\text{ when }k\geq 5.\end{cases}
\]
 \end{theorem}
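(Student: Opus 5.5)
We follow the transference strategy of Green--Tao in the quantitative form of Rimani\'c--Wolf, feeding in two quantitative inputs: a quasipolynomial inverse theorem for the Gowers $U^{k-1}$ norm valid for functions bounded by a pseudorandom majorant, and a dense model theorem with quasipolynomial losses.

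\emph{Setup.} First apply the $W$-trick. Let $W=\prod_{p\le w}p$ with $w$ a slowly growing parameter (about $\log\log N$), and pick a residue $b$ coprime to $W$ so that $\mathcal A$ has relative density $\gg\delta$ on $\{n: Wn+b\in\mathbb P\}$. Set $N'=N/W$ and
\[
f(n)=\tfrac{\phi(W)}{W}\log(Wn+b)\,1_{\mathcal A}(Wn+b)
\]
on $\Z/N''\Z$, with $N''$ a prime of size about $kN'$. Then $0\le f\le\nu$, where $\nu$ is a Goldston--Y{\i}ld{\i}r{\i}m-type truncated divisor sum majorant, and $\E f\gg\delta$. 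The $k$-AP count $\Lambda_k(f)$ is then at most $O(N''^{-1})$, coming from trivial progressions only. The linear forms estimates for $\nu$ give $\|\nu-1\|_{U^{k-1}}\ll w^{-c}$ together with the error terms needed below, with $R=N^{c}$ sieve truncation.

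\emph{Decomposition.} Next decompose $f=g+h$ with $0\le g\le 1+O(\epsilon)$, $\E g=\E f$, and $\|h\|_{U^{k-1}}\le\epsilon$. To get this, combine the unbounded inverse theorem (correlation of $h$ with a nilsequence of degree $k-2$, dimension and complexity $\ll \log(1/\epsilon)^{O(1)}$) with a dense model argument of energy-increment or Hahn--Banach type, where the dual class consists of such nilsequences. The quasipolynomial dependence of both ingredients lets $\epsilon$ be as small as $\exp(-\log(1/\delta)^{O(1)})$ provided $w$ is at least $\exp(\log(1/\epsilon)^{O(1)})$, the pseudorandomness error $w^{-c}$ being what controls the dense model step.

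\emph{Counting.} The generalized von Neumann theorem for $\nu$-bounded functions bounds $\Lambda_k$ by $\|\cdot\|_{U^{k-1}}$, so
\[
\Lambda_k(f)=\Lambda_k(g)+O(\epsilon+w^{-c}).
\]
Since $g$ is essentially bounded with mean $\gg\delta$, a Varnavides-type averaging of \eqref{eq:sz} gives $\Lambda_k(g)\gg c(\delta)$. Here $c(\delta)=\exp(-\exp(\log(1/\delta)^{O(1)}))$ for $k\ge5$, and for $k=4$ it is $\exp(-\delta^{-O(1)})$. This gives a contradiction unless $\epsilon,w^{-c}\gg c(\delta)$. The constraint $w\le\log\log N$ (needed so that $W\le N^{o(1)}$ and the sieve weights behave) then yields $c(\delta)\gg(\log\log N)^{-O(1)}$ after accounting for the quasipolynomial losses. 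Solving, for $k\ge5$ we get $\log(1/\delta)^{O(1)}\ll\log\log\log N$, which is the claimed bound. For $k=4$ we get $\delta^{-O(1)}\ll\log\log N$, giving $\delta\ll(\log\log N)^{-c_4}$.

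\emph{Main obstacle.} The hard step is the decomposition, specifically keeping the whole chain quasipolynomial. The unbounded inverse theorem has to be proved by rerunning the Leng--Sah--Sawhney argument with $\nu$-bounded functions, controlling the Cauchy--Schwarz steps through linear forms conditions at polynomially many levels. The dense model theorem has to avoid the exponential losses of iterated energy increments. For the latter, I would first try the Reingold--Trevisan--Tulsiani--Vadhan min-max approach, approximating the nilsequence dual class by a net of polynomial size so that the only losses are polynomial in the inverse-theorem parameters.
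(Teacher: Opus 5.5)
\textbf{Genuine gap: the quasipolynomial dense model.} You assert this step rather than supply it, and the route you suggest would not give it.

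The min-max/Hahn--Banach dense model theorems of Gowers and of Reingold--Trevisan--Tulsiani--Vadhan lose exponentially, as the paper notes. That loss comes from handling the threshold/positive-part nonlinearity, which forces pseudorandomness against long products of dual functions at exponentially small accuracy. It does not come from the size of the dual class, so replacing the nilsequences by a polynomial-size net removes nothing. Nor is the number of energy-increment steps the problem; the paper's iteration has only quasipolynomially many steps.

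The real problem is boundedness of the projections. Write $\eps\approx(\log N)^{-c}$ for the pseudorandomness level. Since $\|f\|_2^2$ is roughly $\log N$ rather than $O(1)$, you need $\|\Pi_{\CB}f\|_2\ll1$ uniformly over factors that are joins of $\exp((\log1/\eps)^{O(1)})$ nilsequence level-set factors. Such factors have up to $\exp\exp((\log1/\eps)^{O(1)})$ atoms, and $f$ could concentrate on atoms of density far below $\eps$, where $\|\nu-1\|_{U^{2k}}\le\eps$ says nothing.

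The paper's key extra input is arithmetic information about the primes themselves, not the sieve weight:
\begin{enumerate}
\item Atoms of $C$-regular factors are approximated in every $\ell^p$ by nilsequences (Lemmas~\ref{lem-meas} and~\ref{set-nil}).
\item A second majorant $\nu_2=\frac{\phi(W)}{W}\Lambda(W\cdot+b)$ with $\|\nu_2\|_\infty\ll\log N$ is introduced.
\item Vaughan's identity and Type~I/II estimates for nilsequences along sparse progressions (Proposition~\ref{cor-1}, Lemma~\ref{cor-mangoldt}, with Brun--Titchmarsh) bound correlations of $\nu_2$ with nilsequences of dimension $(\log1/\eta)^3$ and complexity $\exp((\log1/\eta)^3)$ by sums of the nilsequence over long progressions. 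The error is $\eta^2N$, for $\eta$ as small as $\exp(-\exp((\log1/\eps)^{1/3}))$.
\end{enumerate}
Lemma~\ref{lem-cor} turns this into $|\Pi_\CB f|\le2$ off a negligible union of small atoms and $\|\Pi_\CB f\|_2^2\le5$. That is exactly what the energy increment (Lemma~\ref{tra-wea}, Proposition~\ref{tra-wea-2}) needs. Your proposal has no substitute for this, so your decomposition stays at the exponential-loss level of earlier work.

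\textbf{Bookkeeping: you lose a logarithm.} Against main terms $\exp(-\exp((\log1/\delta)^{C_k}))$ and $\exp(-\delta^{-C_4})$, the stated bounds need a total error of $\exp(-(\log\log N)^{c})$.
\begin{enumerate}
\item Comparing $c(\delta)$ with an error of size $(\log\log N)^{-O(1)}$, as you do, gives only $(\log1/\delta)^{O(1)}\ll\log\log\log\log N$ and $\delta^{-O(1)}\ll\log\log\log N$. This is one logarithm short of the theorem, and for $k=4$ it is just the Rimani\'c--Wolf bound; your solving step drops an exponential.
\item An error of $\exp(-(\log\log N)^{c})$ requires a $(\log N)^{-c}$-pseudorandom majorant followed by a quasipolynomial dense model. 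So $w$ must be a small power of $\log N$; the paper takes $w=(\log N)^{c_0}$.
\item Your restriction $w\le\log\log N$ is unfounded: $W=e^{(1+o(1))w}$, so $w=(\log N)^{c_0}$ still gives $W=\exp(O((\log N)^{c_0}))=N^{o(1)}$. With $w\approx\log\log N$, even a quasipolynomial dense model yields error only $\exp(-(\log\log\log N)^{c})$.
\end{enumerate}

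\textbf{Minor point.} You need not rerun Leng--Sah--Sawhney for $\nu$-bounded functions. Theorem~\ref{tra-inv} obtains the transferred inverse theorem by Conlon--Fox--Zhao/Tao--Ter\"av\"ainen densification, applying the bounded inverse theorem as a black box to truncated dual functions, at polynomial cost.
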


 \subsection{Proof outline} 

Our proof of Theorem~\ref{density-bound} proceeds through the framework of the \emph{transference principle}, originally introduced by Green~\cite{G-trans}, and refined in the works~\cite{GT-prime},~\cite{TZ-polynomial-sze},~\cite{reing},~\cite{Go-dense},~\cite{CFZ}, among others. The broader aim of this paper is to construct a \emph{dense model} with quasipolynomial bounds, formulated precisely in Lemma~\ref{tra-wea} and Proposition~\ref{tra-wea-2}. Roughly speaking, these results assert the following: if a function $f\colon \mathbb{Z}\to\mathbb{R}_{\geq 0}$ supported on $[N]$ admits a pseudorandom majorant $\nu\colon \mathbb{Z} \to \R_{\geq 0}$  satisfying the Gowers norm bound
\begin{align}\label{eq:nubound}
\norm{\nu -1}_{U^{2k}[N]} \leq \eps 
\end{align}
 for some small parameter $0<\eps<1$, and if, for technical reasons, we additionally assume the mild $L^{\infty}$ bound 
 \[
 \norm{\nu}_\infty \ll \exp\exp\bigbrac{(\log(1/\eps))^{c_k'}},
 \]
for some small constant $c_k'>0$,
 then there exists a bounded function $g\colon [N] \to [0,2]$ such that
\begin{align}\label{unif-intr}
\norm{f-g}_{U^k[N]} \ll_k \exp\bigbrac{- (\log(1/\eps))^{\gamma_k}}
\end{align}
for some small $\gamma_k>0$. 
This dense model allows one to transfer multilinear counting expressions from the (potentially unbounded) function $f$  to the bounded function $g$.\footnote{We note that our pseudorandomness hypothesis is weaker than in~\cite{GT-prime} or~\cite{CFZ}. The possibility of such a relaxation to a $U^{2k}[N]$-norm bound was first shown in the qualitative setting in~\cite{dodos}.} Once such a function $g$ is in place, the remaining arguments become essentially standard. We emphasise that~\eqref{unif-intr} improves the exponential bounds obtained in~\cite{Go-dense} and~\cite{reing}, and Theorem~\ref{density-bound} crucially relies on this strengthened estimate.

We briefly sketch the transference argument. The version of Szemer\'edi's theorem due to Varnavides~\cite{Var} asserts that for $N\geq N_0(\delta,k)$ large enough, for any $1$-bounded function $h\colon \mathbb{Z}\to [0,1]$ with average $\delta=\E_{n\in [N]} h(n)$, one has  
\[
N^{-1} \sum_{x\in \Z}\E_{d\in [N]} h(x) h(x+d) \cdots h(x+(k-1)d) \geq c(\delta,k)
\] 
for some function $c(\delta,k)>0$. Both functions $N_0(\delta,k)$ and $c(\delta,k)$ can be made explicit by using the best known quantitative bounds for Szemer\'edi's theorem.
On the other hand, the generalised von Neumann theorem (Lemma~\ref{neumann}) shows that the difference between the counting expressions weighted by $f$ and $g$ can be controlled by a suitable Gowers norm. Let $f$ be a suitable $W$-tricked version of the function $\Lambda\cdot 1_{\mathcal{A}}$, where $\Lambda$ is the von Mangoldt function. Assuming that we have a pseudorandom majorant $\nu$ as in~\eqref{eq:nubound} with $\varepsilon=(\log N)^{-c_k'}$ for some small $c_k'>0$, from the above we obtain a function $g\colon \Z\to [0,2]$ that is supported on the interval $[N]$ and satisfies~\eqref{unif-intr}, and then we can estimate
\begin{align*}
&N^{-1} \sum_{x\in \Z}\E_{d\in [N]} f(x) f(x+d) \cdots f(x+(k-1)d)\\
=&N^{-1} \sum_{x\in \Z} \E_{d\in [N]} g(x) g(x+d) \cdots g(x+(k-1)d) +O(\norm{f-g}_{U^{k-1}[N]})\\
\geq &2^kc(\delta/2,k) - O_k(\exp\bigbrac{- (\log(1/\eps))^{\gamma_k}})	
\end{align*}
for some $\gamma_k>0$.
To obtain a nontrivial bound, we therefore require
\[
c(\delta/2,k) \gg \exp\bigbrac{- (\log(1/\eps))^{\gamma_k/2}},
\]
and now the range of $\delta$ in which we obtain a contradiction follows from the quantitative Szemer\'edi theorem.
 We can indeed take $\varepsilon$ to be of size $(\log N)^{-c_k'}$ for some $c_k'>0$ by using the Goldston--Y{\i}ld{\i}r{\i}m type asymptotics of Green and Tao~\cite{GT-linear}, as in ~\cite{TT}. However, it appears very difficult to take $\eps$ significantly smaller than a small negative power of $\log N$, since the pseudorandom majorant $\nu$ is essentially a $W$-tricked Selberg sieve, with $W=\prod_{p\leq w}p$, and such a function is Gowers uniform only up to accuracy $w^{-c_k'}$ for some $c_k'>0$. Current technology seems to limit us to $W\ll N^{c}$ for some $c>0$, and hence we must restrict to $w$ somewhat smaller than $\log N$. 
Consequently, our argument suffers an additional logarithmic loss compared with the dense setting.

In the final part of this subsection, we briefly outline our approach to constructing a dense model. The  objective, as suggested by inequality~\eqref{unif-intr}, is to find a structured function that approximates $f$ in the Gowers uniformity norm. A trivial ansatz for such a structured function is the global average $\E_{[N]}(f)$, which is constant on the entire interval $[N]$. If $f$ is sufficiently pseudorandom, one would expect 
\[
\norm{f-\E_{[N]} (f)}_{U^k[N]} \leq\eps.
\]
 Otherwise, we  refine $[N]$ using a partition $\CB =\set{B_1,\dots,B_m}$ and consider the conditional expectation $\E_\CB (f)$, which is locally constant on each atom $B_i\in \CB$. We then test whether $\norm{f-\E_{\CB} (f)}_{U^k[N]} \leq\eps $ or not. 
 
 The inverse theorem (Theorem~\ref{tra-inv}) serves as the tool for generating successive refinements of the partition, while Pythagoras's theorem (Lemma~\ref{pythagoras}) allows us to accumulate the gain from each step. However, a remaining issue is to ensure that the iteration terminates with usable information. For a $1$-bounded function $g$, one enforces termination via the bound $\|g\|_2\leq 1$; this approach appears in, for instance,~\cite{LSS2,PP2,PPS,SW}. In our setting, however, $f$ is modelling the von Mangoldt function, and typically $\norm{f}_2^2\gg \log N$. Since the gain at each step is small (for the primes, we can only ensure a gain on the order of $\exp(-(\log\log N)^{1/100})$, say), we cannot afford such a long iteration. However, since the iteration produces a sequence of increasingly refined partitions, if it does not run for too long then we can guarantee that \emph{most} atoms are not too small, and averaging over these atoms helps flatten the peaks of $f$. Using this observation, we get around this issue by showing that $\norm{\E_\CB(f)}_2\ll 1$ for a large class of partitions $\CB$, which is sufficient to run the iteration.  As expected, $\E_\CB(f)$ will be constant on the atoms of the final partition, but it is not clear that it is $\ell^2$-norm bounded. To establish this, we show that each atom $B_i\in \CB$ can be approximated by suitable nilsequences, and then deduce boundedness by controlling the correlation of $f$ with these nilsequences by using the nilpotent Hardy--Littlewood method (see Proposition~\ref{cor-1}).


\subsection{Organisation of the paper}

Section~\ref{sec2} establishes a transferred inverse theorem for the Gowers uniformity norms with quasipolynomial bounds; this result is of independent interest and may have applications beyond the present setting. 

In Section~\ref{sec3}, our goal is to prove the dense model theorem with quasipolynomial dependencies. In Lemma~\ref{tra-wea}, we apply an energy increment argument together with the transferred inverse theorem to show that the target function $f$ can be approximated in $U^k$ norm by a conditional expectation of $f$ on a factor given in terms of level sets of nilsequences. We then further show that the indicator functions of the atoms of this factor can be well approximated by nilsequences. We end this section with Proposition~\ref{tra-wea-2}, which is a dense model theorem stating that if a certain technical condition relating to $f$ correlating with only ``major arc'' nilsequences holds, then the model of $f$ given by the conditional expectation is a bounded function.

In Section~\ref{sec4}, we use Vaughan's identity and Type I and II estimates for nilsequences to show that the technical nilsequence condition from Proposition~\ref{tra-wea-2} is satisfied in the case of the von Mangoldt function.

In Section~\ref{sec5} we prove a generalised von Neumann theorem with explicit quantitative dependencies suitable for our application.

Finally, in Section~\ref{sec6} we put these ingredients together to deduce Theorem~\ref{density-bound}.

\subsection{Notation}

We record here some basic notational conventions that will be used throughout the paper. If $A$ is a set, we use $1_A$ to denote the indicator of $A$; and if $A$ is a statement, we let $1_A$ equal to $1$ when $A$ is true and to $0$ when $A$ is false. Whenever $\theta \in \mathbb{R}$, we write $\|\theta\|_\T$ for the distance from $\theta$ to the nearest integer. When no confusion arises, we abbreviate this as $\norm{\theta}$.

We use the asymptotic notation $X\ll Y$, $Y\gg X$ or $X=O(Y)$ to indicate that $|X|\leq CY$ for some constant $C>0$. 

For a nonempty finite set $A$ and a function $f\colon A \to \C$ we use the averaging notation
\[
\E_{x\in A} f(x) =\frac{1}{|A|} \sum_{x\in A} f(x).
\]
For $1\leq p\leq \infty$ and functions $f,g\colon [N] \to\C$, we define the (normalised) $\ell^p$ norms by
\[
\norm{f}_{p} = \bigbrac{\E_{n\in [N]} |f(n)|^p}^{1/p}
\]
with the usual convention that $\norm{f}_\infty = \sup_{n\in [N]}|f(n)|$. Note that the definition of $\norm{f}_{p}$ depends on $N$, but this will always be clear from context. We also define the inner product 
\[
\ang{f,g} =\E_{n\in [N]} f(n) \overline{g(n)}.
\]

\subsection*{Gowers uniformity norm}
Let $G$ be an abelian group, let $h\in G$, and let $f\colon G\to\C$ be a finitely supported function. For $n\in G$, we define the  multiplicative difference operator by 
\[
\Delta_{ h} f(n) = \overline{f(n+h)} f(n).
\]
For a $k$-tuple $\vec h=(h_1,\dots,h_k)\in G^k$, we define
\begin{align}\label{eq:deltadef}
\Delta_{\vec h} f(n) =\Delta_{h_k}\cdots \Delta_{h_1} f(n) = \prod_{\omega\in \set{0,1}^k} \mathcal C^{|\omega|} f(n+\omega\cdot \vec h),
\end{align}
where $\mathcal C\colon z\to \bar z$ denotes complex conjugation and for a vector $\omega=(\omega_1,\dots,\omega_k)$ write $|\omega| =|\omega_1| +\cdots + |\omega_k|$.

If $f\colon G\to\C$ is a finitely supported function on an abelian group $G$, we define the unnormalised Gowers uniformity norm $\norm{f}_{\tilde{U}^k(G)}$ of order $k$ to be the quantity
\[
\norm{f}_{\tilde{U}^k(G)} =\biggbrac{\sum_{n\in G}\sum_{\vec h \in G^k} \Delta_{\vec h} f(n)}^{1/2^k}.
\]

We will need the Gowers--Cauchy--Schwarz inequality over $G$, which states that for any natural number  $k\geq 1$ and any family of functions $f_\omega\colon G\to \C$ for $\omega \in \set{0,1}^k$, we have
\begin{align}\label{eq:gcs2}
\Bigabs{\sum_{x,h_1,\dots,h_k\in G}\prod_{\omega\in \set{0,1}^k} \mathcal C^{|\omega|}f_\omega(x+\omega \cdot \vec h)} \leq \prod_{\omega \in \set{0,1}^k} \norm{f_\omega}_{\tilde{U}^k(G)}.
\end{align}
See~\cite[(4.2)]{TT}. The standard proof for finite abelian groups given in~\cite[Lemma B.2]{GT-linear} works also for infinite abelian groups.

Finally, for any function $f\colon\Z \to\C$ and any $N\geq 1$, we define the interval Gowers uniformity norm by
\[
\norm{f}_{U^k[N]} =\frac{\norm{f1_{[N]}}_{\tilde{U}^k(\Z)}}{\norm{1_{[N]}}_{\tilde{U}^k(\Z)}}. 
\]
We will also need a version of Gowers--Cauchy--Schwarz adapted to $[N]$. 

\begin{lemma}[Interval  Gowers--Cauchy--Schwarz]\label{le:gcs}
 Let $k$ be a natural number, and let $N\geq 1$. Let $f_\omega\colon \Z\to \C$ be functions supported on  $[N]$ for $\omega\in \{0,1\}^k$. Then
\begin{align}\label{eq:gcs}
\Bigabs{N^{-1} \sum_{x\in \mathbb{Z}}\E_{h_1,\dots,h_k \in [-N,N]} \prod_{\omega\in \set{0,1}^k} \mathcal C^{|\omega|}f_\omega(x+\omega \cdot \vec h)} \ll_k \prod_{\omega \in \set{0,1}^k} \norm{f_\omega}_{U^k[N]}.
\end{align}   
\end{lemma}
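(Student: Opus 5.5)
The plan is to reduce the restricted average over $h_i\in[-N,N]$ to the full-group Gowers--Cauchy--Schwarz inequality~\eqref{eq:gcs2} over $G=\Z$, and then normalise. The only difficulty is the cutoff $h_i\in[-N,N]$. Since each $f_\omega$ is supported on $[N]$, the cutoff is essentially automatic. Indeed, if $x\in[N]$ (forced by $f_{0}$) and $x+h_i\in[N]$ (forced by $f_{e_i}$, where $e_i$ is the $i$-th unit vector), then $h_i\in[-N+1,N-1]$. So any tuple $(x,\vec h)$ with $h_i\notin[-N,N]$ for some $i$ makes the product vanish.

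Concretely, I would write
\[
\sum_{x\in\Z}\sum_{h_1,\dots,h_k\in[-N,N]}\prod_{\omega}\mathcal C^{|\omega|}f_\omega(x+\omega\cdot\vec h)=\sum_{x\in\Z}\sum_{\vec h\in \Z^k}\prod_{\omega}\mathcal C^{|\omega|}f_\omega(x+\omega\cdot\vec h),
\]
using the support observation above. The left side of~\eqref{eq:gcs} is $N^{-1}(2N+1)^{-k}$ times this quantity. By~\eqref{eq:gcs2} with $G=\Z$, its absolute value is at most
\[
N^{-1}(2N+1)^{-k}\prod_\omega \norm{f_\omega}_{\tilde U^k(\Z)}.
\]
Since $f_\omega=f_\omega 1_{[N]}$, we have $\norm{f_\omega}_{\tilde U^k(\Z)}=\norm{f_\omega}_{U^k[N]}\,\norm{1_{[N]}}_{\tilde U^k(\Z)}$. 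The product therefore becomes
\[
\norm{1_{[N]}}_{\tilde U^k(\Z)}^{2^k}\prod_\omega\norm{f_\omega}_{U^k[N]}.
\]

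It remains to show $\norm{1_{[N]}}_{\tilde U^k(\Z)}^{2^k}\ll_k N^{k+1}$. This quantity counts the $(x,\vec h)$ with all $x+\omega\cdot\vec h\in[N]$. Any such tuple has $x\in[N]$ and $|h_i|<N$, so the count is at most $N(2N)^k$. Combining the bounds gives the left side of~\eqref{eq:gcs} $\leq N^{-1}(2N+1)^{-k}\cdot 2^kN^{k+1}\prod_\omega\norm{f_\omega}_{U^k[N]}\ll_k\prod_\omega\norm{f_\omega}_{U^k[N]}$, as required.

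There is no real obstacle. The one point needing care is the support argument that removes the $h$-cutoff, which uses only the vertices $\omega=0$ and $\omega=e_i$. We only need the upper bound on $\norm{1_{[N]}}_{\tilde U^k(\Z)}^{2^k}$; a matching lower bound of order $N^{k+1}$ holds but is not needed for this direction.
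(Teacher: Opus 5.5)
Your proposal is correct and follows the paper's own argument: the supports of the $f_\omega$ (using only $\omega=0$ and $\omega=e_i$) force $h_i\in[-N,N]$, so the cutoff can be dropped and \eqref{eq:gcs2} over $G=\Z$ applied. You also spell out the normalisation bound $\norm{1_{[N]}}_{\tilde U^k(\Z)}^{2^k}\le N(2N)^k$, which the paper's one-line proof leaves implicit.
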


\begin{proof}
This follows from~\eqref{eq:gcs2} by taking $G=\mathbb{Z}$ and noting that $x+\omega \cdot \vec h\in [N]$ for all $\omega\in \{0,1\}^k$ implies $x,h_1,\ldots, h_k\in [-N,N]$.
\end{proof}

\subsection*{Acknowledgements}

JT was supported by funding from the European Union's Horizon Europe research and innovation programme under ERC grant agreement no. 101162746.
MW was supported by the Swiss National Science Foundation grant TMSGI2-2112.  

\section{Transferred inverse theorem for Gowers uniformity norms}\label{sec2}

In this section, we establish a transferred version of the quasipolynomial inverse theorem for the Gowers uniformity norm for unbounded functions with a pseudorandom majorant. This result matches the quasipolynomial bound of Leng--Sah--Sawhney~\cite{LSS} for $1$-bounded functions. For convenience, we briefly recall the relevant definitions relating to nilsequences here, while referring the reader to~\cite{LSS} for more detailed notation.

\begin{definition}[Filtered nilmanifold]\label{nilmanifold}

Let $s$ be a natural number and $d,M\geq 1$. We say that $G/\Gamma =(G/\Gamma, G_\bullet,\mathcal X)$ is a \emph{ nilmanifold  of degree at most $s$, dimension at most $d$  and complexity at most $M$} if the following conditions hold:
\begin{enumerate}
	\item $G$ is a connected, simply-connected nilpotent Lie group of dimension at most $d$;
	\item $\Gamma$ is a discrete, cocompact subgroup of $G$;
	\item $G_\bullet =(G_{(i)})_{i=0}^\infty$ is a filtration of $G$ of degree at most $s$;
	\item  $\mathcal X$ is  a Mal'cev basis for $G/\Gamma$ adapted to $G_\bullet$, and $\mathcal X$ is  $M$-rational.
\end{enumerate} 
\end{definition}

\begin{definition}[Nilsequences]\label{def-nil}
	Let $s,d,D$ be natural numbers and $M'\geq 1$.
	A \emph{polynomial orbit} of degree at most $s$, dimension at most $d$ and complexity at most $M'$ is any function $n \mapsto g(n)\Gamma$ from $\Z^D \to G/\Gamma$, where $(G/\Gamma, G_\bullet,\mathcal X)$ is a filtered nilmanifold of degree at most $s$, dimension at most $d$ and complexity at most $M'$ and $g\in \poly(\Z^D, G_\bullet)$ is a polynomial sequence, defined in~\cite[Definition 2.5]{LSS}.
	
	We define the \emph{Lipschitz norm} of a function $F\colon X\to \C$ on a metric space $(X,\rd_X)$ by
	\[
	\norm{F}_\lip = \norm{F}_{L^\infty(X)} + \sup_{\substack{x,y \in X\\ x\neq y}} \frac{|F(x)-F(y)|}{ \rd_X(x,y)}.
	\]
	
	 Let $K\geq 1$. A \emph{nilsequence of degree at most $ s$, dimension at most $d$ and complexity at most $(M', K)$} is any function $h\colon \Z^D \to\C$ of the form 
\[
h(n) = F(g(n)\Gamma),
\] 
where $n\mapsto g(n)\Gamma$ is a polynomial orbit of degree $\leq s$, dimension $\leq d$ and complexity $\leq M'$, and $F\colon G/\Gamma \to\C$ is a function with
\begin{align*}
\norm{F}_\lip\leq K   
\end{align*}
(where the Lipschitz norm is with respect to the metric on $G/\Gamma$ given by the Mal'cev basis $\mathcal{X}$, as in~\cite[Definition 3.4]{LSS}). 

Lastly, for $M\geq 1$, we say that a nilsequence $h$ has \emph{complexity at most $M$} if it has complexity at most $(M',K)$ for some $M',K\leq M$.
\end{definition}

With this notation, the quasipolynomial inverse theorem of Leng--Sah--Sawhney states the following.

\begin{theorem}[Quasipolynomial inverse theorem]\label{inverse}
Let $k \geq 2$ be a natural number, let $N\geq 2$, and let $0<\eta < 1/3$. Suppose that $f\colon [N]\to\C$ is a $1$-bounded function such that
\[
\norm{f}_{U^k[N]} \geq \eta.
\]
Then there exist a nilmanifold $G/\Gamma$ of degree at most $k-1$, dimension at most $d$, and complexity at most $M$; a Lipschitz function $F\colon G/\Gamma \to \C$ with $\norm{F}_\lip=1$; and a polynomial map $g\colon \Z \to G$ such that
\[
\Bigl|\E_{n\in [N]} f(n)\,\overline{F(g(n)\Gamma)}\Bigr| \geq \eps,
\]
where
\[
d \leq (\log(1/\eta))^{O_k(1)} \quad \text{and} \quad \eps^{-1},\, M \leq \exp\!\bigl((\log(1/\eta))^{O_k(1)}\bigr).
\]
\end{theorem}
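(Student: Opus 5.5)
This theorem is the Leng--Sah--Sawhney quasipolynomial inverse theorem, and I would not reprove it. I would import it from~\cite{LSS} and check only that their normalisations match the ones used here. The plan is therefore a reduction to the published statement, in three steps.

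\textbf{Step 1: pass to a cyclic group.} Leng--Sah--Sawhney state their result for $1$-bounded functions on $\Z/N'\Z$ with $N'$ prime, or equivalently on intervals $[N]$. If only the cyclic version is available, I would embed $[N]$ into $\Z/N'\Z$ with $N'$ a prime in $[2^kN,2^{k+1}N]$, extend $f$ by zero, and compare norms. The standard comparison gives
\[
\norm{f}_{U^k(\Z/N'\Z)}\gg_k \norm{f}_{U^k[N]} .
\]
The reason is that for functions supported on $[N]$, no wraparound occurs once $N'>2^kN$, and the normalising factor $\norm{1_{[N]}}_{\tilde U^k(\Z)}^{2^k}\asymp_k N^{k+1}$ differs from $(N')^{k+1}$ only by a constant depending on $k$. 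So the hypothesis gives $\norm{f}_{U^k(\Z/N'\Z)}\geq c_k\eta$, and the loss of $c_k$ is harmless at quasipolynomial scale.

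\textbf{Step 2: apply the cyclic inverse theorem and restrict.} The cyclic theorem then yields a nilsequence $F(g(n)\Gamma)$ on $\Z/N'\Z$ whose correlation with $f$ is at least $\exp(-(\log(1/\eta))^{O_k(1)})$. Its complexity and dimension obey the bounds stated in the theorem. Since $f$ vanishes outside $[N]$, this correlation is a sum over $[N]$ only. Viewing $g$ as a polynomial map on $\Z$, the average over $\Z/N'\Z$ equals $(N/N')\,\E_{n\in[N]}$, and the factor $N/N'\gg_k 1$ is again harmless.

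\textbf{Step 3: normalise.} First rescale $F$ so that $\norm{F}_\lip=1$ exactly; this changes the correlation by at most a factor $M$, which the bounds absorb. If $\eta$ is close to $1/3$, the bounds are trivially consistent, since the right-hand sides are then $O_k(1)$. Finally, if the reference is only for $k\geq 3$, I would treat $k=2$ directly. There, $\norm{f}_{U^2}\geq\eta$ gives a Fourier coefficient of size $\gg\eta^2$, and I would take $G=\R$, $\Gamma=\Z$ with $F$ a smoothed $e(\cdot)$ of bounded Lipschitz norm. The main obstacle is purely bookkeeping: making sure the Lipschitz and complexity conventions of Definition~\ref{def-nil} agree with those in~\cite{LSS}, which they do by construction since the definitions were taken from there.
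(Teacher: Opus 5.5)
Your proposal is correct and follows the paper's proof. The paper simply quotes \cite[Theorem~1.2]{LSS} and divides $F$ by its Lipschitz constant $K\leq\exp((\log(1/\eta))^{O_k(1)})$, absorbing this loss into $\eps$ exactly as in your Step~3; your detour through $\Z/N'\Z$ in Steps~1--2 is harmless but unnecessary, since that theorem is already formulated for $1$-bounded functions on the interval $[N]$.
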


\begin{proof}
This is \cite[Theorem~1.2]{LSS}, with the normalisation adjusted to ensure that the Lipschitz norm is $1$. This is achieved by replacing $\eps$ with $\eps/K$ and then setting $K=1$ in their statement.	
\end{proof}

By combining the densification arguments of~\cite{CFZ} and~\cite{TT} with Theorem~\ref{inverse}, the  main result of this section establishes an inverse theorem for unbounded functions with a pseudorandom majorant. This result refines \cite[Theorem 8.3]{TT} by improving the parameter dependence from exponential to quasipolynomial. We first introduce a lemma that computes the complexity of a product of nilsequences.

\begin{lemma}[Product of nilsequences]\label{pro-nil}

Let $m,s\in \N$ and let $d,M\geq 1$. Suppose that $\psi_1,\dots,\psi_m\colon \Z \to\C$ are nilsequences of degree  $s$, dimension at most $d$, and complexity at most $M$. Then $\prod_{1\leq j\leq m}\psi_j$ is a nilsequence of degree at most $s$, dimension at most $md$, and complexity at most $(m+1)M^m$.
\end{lemma}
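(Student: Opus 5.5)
The natural approach is to realise the product on the direct product nilmanifold. Write $\psi_j(n)=F_j(g_j(n)\Gamma_j)$, where $(G_j/\Gamma_j,(G_j)_\bullet,\mathcal X_j)$ has degree at most $s$, dimension $d_j\leq d$, and complexity at most $M_j'\leq M$, and where $\norm{F_j}_\lip\leq K_j\leq M$. Set
\[
G=G_1\times\cdots\times G_m,\qquad \Gamma=\Gamma_1\times\cdots\times\Gamma_m,\qquad G_{(i)}=G_{1,(i)}\times\cdots\times G_{m,(i)}.
\]
Then $G$ is connected, simply connected and nilpotent of dimension $\sum_j d_j\leq md$, and $\Gamma$ is discrete and cocompact. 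The filtration $G_\bullet$ has degree at most $s$, since the commutator relations $[G_{(i)},G_{(j)}]\subseteq G_{(i+j)}$ hold componentwise.

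The polynomial sequence is $g(n)=(g_1(n),\dots,g_m(n))$, which lies in $\poly(\Z,G_\bullet)$ because discrete derivatives are computed componentwise. The function on $G/\Gamma\cong\prod_j G_j/\Gamma_j$ is $F(x_1,\dots,x_m)=\prod_j F_j(x_j)$, so that $F(g(n)\Gamma)=\prod_j\psi_j(n)$. It then remains to check the complexity of the Mal'cev basis and the Lipschitz norm of $F$.

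For the Mal'cev basis, take $\mathcal X$ to be the concatenation of the $\mathcal X_j$, ordered so that it is adapted to the product filtration. Concretely, list first the basis elements lying in $G_{j,(i)}\setminus G_{j,(i+1)}$ for the smallest levels across all $j$, grouped by level; basis elements from different factors commute, so the ordering within a level is harmless. Since the factors commute, the structure constants of $\mathcal X$ are exactly those of the $\mathcal X_j$, with zeros in the cross terms. Hence the rationality is at most $\max_j M_j'\leq M$, and the subgroup $\Gamma$ is correctly described by integer coordinates. The Mal'cev coordinates of a point are the concatenation of its coordinates in each factor, so the associated metric $\rd$ on $G/\Gamma$ (defined as in~\cite[Definition 3.4]{LSS} via $\ell^\infty$-type coordinate distances and an infimum over $\Gamma$) dominates each factor metric, namely $\rd(x,y)\geq \rd_j(x_j,y_j)$. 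I expect this comparison of metrics to be the only genuinely fiddly point, since the LSS metric is defined as the largest metric below a coordinate distance. I would verify it by noting that the projections $G/\Gamma\to G_j/\Gamma_j$ are $1$-Lipschitz: the coordinate distance projects to a coordinate distance, and the quotient by $\Gamma$ is compatible with the projection. A loss of a constant factor depending on $m$ would in any case be absorbed by the generous bound $(m+1)M^m$.

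Finally, the Lipschitz bound follows from a telescoping argument. We have $\norm{F}_\infty\leq\prod_j K_j\leq M^m$, and
\[
|F(x)-F(y)|\leq\sum_{j=1}^m\Bigl(\prod_{i<j}|F_i(y_i)|\Bigr)|F_j(x_j)-F_j(y_j)|\Bigl(\prod_{i>j}|F_i(x_i)|\Bigr)\leq mM^{m}\,\rd(x,y).
\]
Thus $\norm{F}_\lip\leq (m+1)M^m$. Together with $M'\leq M\leq(m+1)M^m$, this shows that the product is a nilsequence of degree at most $s$, dimension at most $md$, and complexity at most $(m+1)M^m$.
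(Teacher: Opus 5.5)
Your proposal is correct and follows the same route as the paper: realise the product on the direct-product nilmanifold with the concatenated Mal'cev basis, observe that the structure constants are unchanged and the cross brackets vanish, and apply the telescoping Lipschitz estimate to get $\|F\|_{\lip}\le M^m+mM^m$. Your two extra checks — that the basis ordering stays adapted to the product filtration, and that the projections $G/\Gamma\to G_j/\Gamma_j$ are $1$-Lipschitz — supply details the paper uses only implicitly, in particular when it bounds $\sup d_{\mathcal X_j}(x_j,y_j)/d_{\mathcal X}(x,y)$ by $1$.
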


\begin{proof}

Write $\psi_j(n) =F_j(g_j(n)\Gamma_j)$; by Definition~\ref{def-nil}, we may assume that $F_j\colon G_j/\Gamma_j\to\C$ has Lipschitz norm at most $M$; $(G_j/\Gamma_j, G_{j\bullet},\mathcal X_j)$ is a filtered nilmanifold of degree  $s$, dimension at most $d$ and $\mathcal X_j$ is $M$-rational; and $g_j\in\poly(\Z, G_{j\bullet})$ is a polynomial sequence.

Set $G\coloneqq G_1\times \cdots \times G_m$ and $\Gamma\coloneqq  \Gamma_1\times \cdots \times \Gamma_m$ equipped with the product filtration $G_\bullet = G_{1\bullet}\times \cdots G_{m\bullet}$ (taken componentwise), and equip $G/\Gamma$ with the product metric. Define
\[
g(n)\coloneqq (g_1(n),\dots,g_m(n))\in G
\quad\text{and}\quad
F(x_1,\dots,x_m)\coloneqq \prod_{j=1}^m F_j(x_j).
\]
Then $g\in \poly(\Z,G_\bullet)$ and $\prod_{1\leq j\leq m}\psi_j = F(g(\cdot)\Gamma)$.

As an analogue of \cite[Fact~3.9]{LSS}, define
\[
\mathcal X \coloneqq \bigcup_{1\le j\le m}
\bigset{
\underbrace{(0,\dots,0, X_j, 0,\dots,0)}_{\text{only the $j$-th entry is nonzero}}
\colon X_j\in \mathcal X_j}.
\]
Then $\mathcal X$ forms a basis of $\log G$. It follows from~\cite[Definition~3.5]{LSS} and Definition~\ref{nilmanifold} that $(G/\Gamma,G_\bullet)$ has degree at most $s$, and its dimension satisfies
\[
\dim(G/\Gamma)\le \sum_{1\le j\le m}\dim(G_j/\Gamma_j)\le md.
\]

Moreover, by~\cite[Definitions~3.3 and~3.5]{LSS}, for any $X,X'\in\mathcal X$ the Lie bracket behaves as follows. 
If there exists some $1\le j\le m$ and $X_j,X'_j\in\mathcal X_j$ such that
\[
X=(0,\dots,0,X_j,0,\dots,0),\qquad
X'=(0,\dots,0,X'_j,0,\dots,0),
\]
then
\[
[X,X']=(0,\dots,0,[X_j,X'_j],0,\dots,0),
\]
so the rational structure constants $c_{ijk}$ agree with those of $G_j$ in this case. 
Otherwise, if $X$ and $X'$ are supported on distinct coordinates (i.e., there exist $i\neq j$ with
$X=(0,\dots,0,X_i,0,\dots,0)$ and $X'=(0,\dots,0,X'_j,0,\dots,0)$), then $[X,X']=0$, so all cross terms vanish. 
Therefore, $G/\Gamma$ has complexity at most $M$ with respect to the Mal'cev basis $\mathcal X$.

Finally, one may verify from Definition~\ref{def-nil} that
\begin{align*}
\|F\|_{\lip}
&\le \prod_{1\le j\le m}\|F_j\|_{\infty}
  + \sup_{\substack{\vec x,\vec y \in G/\Gamma\\ \vec x\neq \vec y}} \frac{\bigl| \prod_{1\le j\le m}F_j(x_j\Gamma_j)
  - \prod_{1\le j\le m}F_j(y_j\Gamma_j)\bigr|}{d_{\mathcal X}((x_1,\dots,x_m),(y_1,\dots,y_m))}\\
  &\leq  M^m +m \sup_{1\leq j\leq m} \prod_{i\neq j}\|F_i\|_{\infty} \sup_{\substack{\vec x,\vec y \in G/\Gamma\\ \vec x\neq \vec y}}\frac{|F_j(x_j\Gamma_j)-F_j(y_j\Gamma_j)|}{d_{\mathcal X}((x_1,\dots,x_m),(y_1,\dots,y_m))}.
\end{align*}
Moreover, by the definition of the Lipschitz norm, for each $1\leq j\leq m$ and each $x_j\neq y_j\in G_j/\Gamma_j$ one has
\[
|F_j(x_j\Gamma_j)-F_j(y_j\Gamma_j)|\leq d_{\mathcal X_j}(x_j,y_j)\cdot\norm{F_j}_\lip. 
\]
Combining these estimates, and using the assumption that $\norm{F_i}_\lip\leq M$ for all $1\leq i\leq m$ we obtain
\[
\|F\|_{\lip}\leq M^m+ mM^m \sup_{1\leq j\leq m}\sup_{\substack{\vec x,\vec y \in G/\Gamma\\ \vec x\neq \vec y}}\frac{d_{\mathcal X_j}(x_j,y_j)}{d_{\mathcal X}((x_1,\dots,x_m),(y_1,\dots,y_m))}\leq (m+1)M^m,
\]
which yields the claimed complexity bound.

\end{proof}

\begin{theorem}[Transferred inverse theorem---quasipolynomial bounds]\label{tra-inv}

Let $k\geq 2$ be a natural number, let $N\geq 2$, and let $0<\eps  <1/3$ be a parameter. Let $\nu\colon [N] \to \R_{\geq 0}$ be a function that satisfies
\begin{equation}\label{eq:nuassumption}
\norm{\nu -1}_{U^{2k}[N]} \leq \exp \bigbrac{ -(\log(1/\eps))^{D_k}}    
\end{equation}
for some constant  $D_k>1$   sufficiently large in terms of $k$.
Suppose that $f\colon [N] \to \C$ satisfies $|f|\leq \nu$ and 
\[
\norm{f}_{U^k[N]} \geq \eps.
\]
Then there exist a constant $0<C_k<D_k/10$ depending only on $k$; a  filtered nilmanifold $G/\Gamma$ of degree $k-1$, dimension at most $(\log(1/\eps))^{C_k}$, and complexity at most $\exp\bigbrac{(\log(1/\eps ))^{C_k} }$; a Lipschitz function $F\colon  G/\Gamma \to\C$ with $\norm{F}_\lip =1$; and a polynomial map $g\colon \Z \to G$ such that
\[
\bigabs{\E_{n\in [N]} f(n) \overline{F(g(n)\Gamma)}} \geq  \exp\bigbrac{-(\log(1/\eps))^{C_k} }.
\]	
\end{theorem}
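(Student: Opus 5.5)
We follow the densification strategy of \cite[Section 8]{TT} and \cite{CFZ}, with Theorem~\ref{inverse} as the black-box input in place of the older inverse theorem. Expand $\norm{f}_{U^k[N]}^{2^k}$ as an average over $\vec h$ of $\Delta_{\vec h} f$, and isolate one vertex of the cube. This gives
\[
\eps^{2^k}\ll \bigabs{\E_{n\in[N]} f(n)\,\mathcal{D}f(n)},
\]
where the dual function $\mathcal{D}f(n)$ is the average over $\vec h$ of the product of the other $2^k-1$ shifted copies of $f$ (suitably normalised and truncated to $[N]$). This dual function is not bounded. The aim is to replace $f$ in it by a bounded function, iteratively, one cube vertex at a time.

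The key densification step is as follows. Suppose we have $|\ang{f_0,\mathcal{D}(f_1,\dots)}|\geq \eta$ with some $f_i$ bounded by $\nu$. Apply Cauchy--Schwarz in the $n$ and $\vec h$ variables to move $f_1$ outside, weighted by $\nu$. Then use the $U^{2k}$ hypothesis \eqref{eq:nuassumption}, via a generalised von Neumann/Gowers--Cauchy--Schwarz argument (Lemma~\ref{le:gcs}), to replace the weights $\nu$ by $1$. The error is $O(\exp(-(\log(1/\eps))^{D_k}))$ times powers of $\eps$, which is negligible. After this, the dual-type average in which $f_1$ was paired can be shown to be bounded in $L^2$ (again by the pseudorandomness of $\nu$, since $\norm{\nu}$-weighted dual functions are close to the $\nu=1$ ones in $L^2$). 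Truncating it at height $\eta^{-O(1)}$ then produces a bounded function $\tilde f_1$ with $\norm{\tilde f_1}_{U^k[N]}\gg \eta^{O(1)}$, or with a correlation of comparable size.

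After at most $2^k$ such steps (a constant depending on $k$), we reach a $1$-bounded function $\tilde f$ with $\norm{\tilde f}_{U^k[N]}\geq \eps^{O_k(1)}$, and moreover $\tilde f$ is a bounded (truncated) dual-type average of $f$. More precisely, the cleanest route is the one in \cite{TT}: show directly that $|\ang{f,\mathcal{D}\tilde g}|$ is large for some $1$-bounded $\tilde g$ with large $U^k$ norm. We then apply Theorem~\ref{inverse} to the bounded function with $\eta=\eps^{O_k(1)}$. This gives a nilsequence $F(g(n)\Gamma)$ of degree $k-1$, dimension $(\log(1/\eps))^{O_k(1)}$ and complexity $\exp((\log 1/\eps)^{O_k(1)})$, correlating with it at level $\exp(-(\log 1/\eps)^{O_k(1)})$.

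The final step transfers this correlation back to $f$. Because the bounded function is built as an average of products of shifts of $f$ (and $\nu$), its correlation with the nilsequence unfolds into an average over $\vec h$ of $f(n+\omega\cdot\vec h)$ multiplied by shifted nilsequences. By pigeonholing in $\vec h$ and Cauchy--Schwarz, together with pseudorandomness of $\nu$ to remove the other factors, we arrive at a correlation of $f$ itself with a product of boundedly many shifted nilsequences times a bounded function. Alternatively, and more simply, we use the inverse theorem's dual form: for bounded $\tilde f$ with large $U^k$ norm, $\ang{f,\tilde f}$ large plus the structure of $\tilde f$ lets us approximate $\tilde f$ by a linear combination of nilsequences. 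Lemma~\ref{pro-nil} controls the dimension and complexity of products of $O_k(1)$ such nilsequences, keeping everything quasipolynomial. Finally, we choose $C_k$ as the resulting exponent and require $D_k>10C_k$, so that all $\nu$-errors are dominated.

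The main obstacle I expect is this last transfer. We must ensure that the truncation and Cauchy--Schwarz steps lose only polynomially in $\eps$. We must also ensure that replacing $\nu$ by $1$ costs only $\exp(-(\log 1/\eps)^{D_k})$ times a quasipolynomial factor in the complexity. The difficulty is that the $\nu$-errors must beat the nilsequence complexity $\exp((\log 1/\eps)^{C_k})$, and this is exactly where we need $D_k$ large relative to $C_k$.
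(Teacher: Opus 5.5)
You have the right ingredients: the dual function, Cauchy--Schwarz against $\nu$, the $U^{2k}$ bound to replace $\nu$ by $1$, truncation, Theorem~\ref{inverse} and Lemma~\ref{pro-nil}. But the way you assemble them has a genuine gap. You invoke the bounded inverse theorem only once, and the final ``transfer back to $f$'' does not go through.

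Suppose a nilsequence $\psi$ correlates with a truncated dual function $\widetilde H$. Using that $\widetilde H$ is $L^2$-close to the untruncated dual function $H$, the most you learn is that the cube average with $\psi$ at one vertex and $f$ at the other $2^k-1$ vertices is large. Pigeonholing in $\vec h$ then gives a correlation of $f$ with $\psi$ times a product of shifts of $f$. The pseudorandomness of $\nu$ cannot remove those factors: they are arbitrary functions dominated by $\nu$, not copies of $\nu$, and a nilsequence times an arbitrary function is not a nilsequence.

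The ``dual form'' alternative fails for a similar reason. A bounded function with large $U^k$-norm is not approximable by a short combination of nilsequences in any sense that survives pairing with the unbounded $f$, and certainly not with quasipolynomial bounds. Densifying all vertices first and applying the inverse theorem at the end has the same defect. Knowing that $\psi$ correlates with the bounded function sitting at one vertex does not let you substitute $\psi$ for it in the cube average.

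The missing idea is to interleave the two operations at every vertex, which is what the paper does. It inducts on sets $\mathcal I'\subseteq\{0,1\}^k$ containing $\vec 1$. The hypothesis is that the cube average with $f$ at the vertices of $\mathcal I'$ and already-constructed nilsequences at the remaining vertices is at least $\exp(-(\log(1/\eps))^{C_{\mathcal I'}})$. To delete a vertex $\omega'\neq\vec 1$:
\begin{enumerate}
\item Form the dual function $H_{\omega'}$ from the current mixture of $f$'s and nilsequences, and let $\widetilde H_{\omega'}$ be its truncation at height $1$.
\item Cauchy--Schwarz and \eqref{eq:nuassumption} give $\|H_{\omega'}\|_2$ large and $\|H_{\omega'}-\widetilde H_{\omega'}\|_2$ tiny.
\item Gowers--Cauchy--Schwarz applied to $\langle H_{\omega'},\widetilde H_{\omega'}\rangle$ gives $\|\widetilde H_{\omega'}\|_{U^k[N]}$ large.
\item Theorem~\ref{inverse} gives $\psi_{\omega'}$ correlating with $\widetilde H_{\omega'}$, hence with $H_{\omega'}$.
\item $\langle H_{\omega'},\psi_{\omega'}\rangle$ is precisely the cube average with $\psi_{\omega'}$ placed at vertex $\omega'$, so the hypothesis holds for $\mathcal I'\setminus\{\omega'\}$.
\end{enumerate}

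After $2^k-1$ such steps only $f$ at $\vec 1$ survives. Pigeonholing in $\vec h$ and Lemma~\ref{pro-nil} then produce a single nilsequence.

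Each step costs only a quasipolynomial factor. The $\nu$-errors merely have to beat these correlation sizes, since the nilsequences are $1$-bounded; this is why $D_k$ large in terms of $C_k$ suffices. The nilsequence complexity plays no role in the $\nu$-error, contrary to the obstacle you anticipated.
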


\begin{proof}

The strategy is to adapt the densification method of~\cite{CFZ}, together with the inverse theorem for the Gowers $U^k$ norm for $1$-bounded functions (Theorem~\ref{inverse}) and an induction argument. The overall structure closely follows the proof of \cite[Theorem 8.3]{TT}.

We  extend $f$ and $\nu$ to functions on $\mathbb{Z}$ supported on $[N]$. We shall establish by induction on $\mathcal{I}'\subseteq \{0,1\}^k$, with $\vec 1\in \mathcal{I}'$, the following statement.

\textbf{Hypothesis}  $\textnormal{H}(\mathcal{I}')$. Set $\mathcal I''$ as the complement of $\mathcal I'$ in $\{0,1\}^k$.
 We claim that there exists a constant $C_{\mathcal I',k}>0$, depending only on $k$ and $\mathcal I'$, such that for every $\omega_2\in \mathcal I''$ there exists a nilsequence $\psi_{\omega_2} = F_{\omega_2}(g_{\omega_2}(\cdot)\Gamma_{\omega_2})$, where $g_{\omega_2}\colon\Z \to G_{\omega_2}$ is a polynomial map,
$F_{\omega_2}$ is a $1$-Lipschitz  function that is  defined on a nilmanifold $G_{\omega_2}/\Gamma_{\omega_2}$ of degree $k-1$, complexity $\leq \exp ((\log(1/\eps))^{C_{\mathcal I',k}})$, and dimension $\leq (\log(1/\eps)   )^{C_{\mathcal I',k}}$,  and such that we have 
\begin{multline}\label{bias-ass}
	\biggabs{N^{-1}\sum_{n\in \Z}\E_{\vec h\in[-N,N]^{k}} \prod_{\omega_1\in \mathcal I'} \mathcal C^{|\omega_1|} f(n+\omega_1\cdot \vec h) \prod_{\omega_2\in \mathcal I''} \psi_{\omega_2} (n+\omega_2 \cdot \vec h)} \\\gg_{\mathcal{I}'} \exp\!\bigl(-(\log(1/\eps))^{C_{\mathcal I',k}}\bigr).
\end{multline}

Note that $\textnormal{H}(\{0,1\}^k)$ holds by the assumption of the theorem. Indeed, by assumption, the left-hand side equals
\[
\Bigabs{N^{-1}\sum_{n\in \Z}\E_{\vec h\in[-N,N]^{k}} \prod_{\omega\in \set{0,1}^k} \mathcal C^{|\omega|} f(n+\omega\cdot \vec h)}\gg_k\norm{f}_{U^k[N]}^{2^k}\gg_k\eps^{2^k}.
\]

Note also that if $\textnormal{H}(\{\vec 1\})$ holds, then assuming that $C_k>1$ is large enough in terms of $k$, we obtain
\[
\Bigabs{N^{-1}\sum_{n\in \Z}\E_{\vec h\in[-N,N]^{k}}\mathcal C^{|\vec 1|} f (n+\vec 1\cdot\vec h) \prod_{\omega \in \set{0,1}^k\setminus\{\vec 1\}} \psi_\omega (n+\omega \cdot \vec h)} \gg_k \exp\bigbrac{-(\log(1/\eps))^{C_k/2} }.
\]
After the change of variables $n+\vec 1\cdot \vec h \to n$ and an application of the pigeonhole principle, we can find some $\vec h_0 \in [-N,N]^{k}$ such that
\begin{equation}\label{eq:H(1)}
\Bigabs{N^{-1}\sum_{n\in \Z} \mathcal C^{|\vec 1|} f(n) \prod_{\omega \in \set{0,1}^k\backslash \set{\vec 1}} \psi_\omega (n+(\omega-\vec 1) \cdot \vec h_0)} \gg_k \exp\bigbrac{-(\log(1/\eps) )^{C_k/2} }.
\end{equation}
Since $C_k$ is large enough, we can absorb the constant in the $\gg_k$ notation by adjusting the exponent and say that~\eqref{eq:H(1)} is $\geq \exp\bigbrac{-(1/2)(\log(1/\eps))^{C_k} }$. Moreover, since  $\supp(f)\subset [N]$, we can further adjust the exponent and reduce the inequality to
\[
\Bigabs{\E_{n\in [N]} f(n) \prod_{\omega \in \set{0,1}^k\backslash \set{\vec 1}}\mathcal C^{|\vec 1|} \psi_\omega (n+(\omega-\vec 1) \cdot \vec h_0)} \geq \exp\bigbrac{-(\log(1/\eps) )^{C_k} }.
\]

The theorem now follows by taking the tensor product of the nilsequences obtained above and then rescaling the resulting Lipschitz function to have norm 1. Moreover, by Lemma~\ref{pro-nil}, the dimension and complexity of the resulting product nilsequence increase at most polynomially, as claimed.

\noindent\textbf{Proof of the induction step.}
Now, by induction, the remaining task is to show that $\textnormal{H}(\mathcal{I}')$ implies $\textnormal{H}(\mathcal{I}'\setminus \omega')$ for any $\omega'\in \mathcal{I}'\setminus \{\vec 1\}$.

Let us define the dual function $H_{\omega'}$ by
\[
H_{\omega'}(n) =\E_{\vec h\in [-N,N]^{k}} \prod_{\omega_1\in \mathcal I'\backslash\set{\omega'} } \mathcal C^{|\omega_1|}f(n+(\omega_1-\omega')\cdot \vec h) \prod_{\omega_2\in \mathcal I''} \psi_{\omega_2} (n+(\omega_2-\omega') \cdot \vec h).
\]
Let $\eps_{\mathcal{I}'}\gg_k \exp\bigbrac{-(\log(1/\eps))^{C_{\mathcal I',k}}}$ be such that the left-hand side of~\eqref{bias-ass}  is $\geq \eps_{\mathcal{I}'}$.

From assumption~\eqref{bias-ass}, we have
\[
\eps_{\mathcal{I}'} \leq \left|N^{-1}\sum_{n\in \Z} f(n) H_{\omega'} (n)\right|.
\]
Observe that we have the bound 
\begin{align}\label{eq:monotone}
\mathbb{E}_{n\in [N]}\nu(n) \ll_k \|\nu\|_{U^{2k}[N]}\ll_k 1,    
\end{align}
which follows from Lemma~\ref{le:gcs} applied to one copy of $\nu$ and $2k-1$ copies of $1_{[(2k+1)N]}$.
Now, applying the Cauchy--Schwarz inequality and using the assumption $|f| \leq \nu$, we obtain
\[
\eps_{\mathcal{I}'} ^2 \ll_k N^{-1}\sum_{n\in \Z} \nu(n) |H_{\omega'} (n)|^2 = N^{-1}\sum_{n\in \Z} |H_{\omega'} (n)|^2 + N^{-1}\sum_{n\in \Z} (\nu -1) (n) |H_{\omega'} (n)|^2.
\]
Since $\norm{\nu-1}_{U^{2k}[N]} \ll \exp (-(\log(1/\eps))^{D_k})$ for some sufficiently large $D_k$ depending on $k$, and since $1/\eps_{\mathcal{I}'}\ll_k \exp((\log(1/\eps))^{O_k(1)})$,  the Gowers--Cauchy--Schwarz inequality gives us
\begin{align*}
N^{-1}\sum_{n\in \Z} (\nu -1) (n) |H_{\omega'} (n)|^2 &\ll_k \norm{\nu -1}_{U^{2k}[N]} \norm{\nu +1}_{U^{2k}[N] }^{2^{2k}-1}\\
&\ll_k \exp (-(\log(1/\eps))^{D_k})(\|\nu-1\|_{U^{2k}[N]}+2)^{2^{2k}-1} \\
&\ll_k \exp (-(\log(1/\eps))^{D_k}).
\end{align*}
Combining these estimates, and taking $D_k$ large enough, we deduce
\begin{equation}\label{eq:F2bound}
N^{-1}\sum_{n\in \Z}|H_{\omega'}(n)|^2\gg_k\varepsilon_{\mathcal{I}'}^2.
\end{equation}
Now, introduce the $1$-bounded truncated function
\[
\widetilde{H}_{\omega'}(n)=\min\{|H_{\omega'}(n)|,1\}\cdot \sgn(H_{\omega'}(n)),    
\]
where $\sgn(z)=z/|z|$ for $z\in \mathbb{C}\setminus\{0\}$ and $\sgn(0)=0$. 
By the Cauchy--Schwarz inequality, we have
\begin{align*}
N^{-1}\sum_{n\in \Z}\overline{H_{\omega'}(n)}\widetilde{H}_{\omega'}(n)& = N^{-1}\sum_{n\in \Z}|H_{\omega'}(n)|^2-N^{-1}\sum_{n\in \Z}\overline{H_{\omega'}(n)}(H_{\omega'}(n)-\widetilde{H}_{\omega'}(n))\\
&\geq N^{-1}\sum_{n\in \Z}|H_{\omega'}(n)|^2-N^{-1}\sum_{n\in \Z}(|H_{\omega'}(n)-\widetilde{H}_{\omega'}(n)|^2\\
&\quad +|H_{\omega'}(n)-\widetilde{H}_{\omega'}(n)|)\\
&\geq N^{-1}\sum_{n\in \Z} |H_{\omega'}(n)|^2-N^{-1}\sum_{n\in \Z}|H_{\omega'}(n)-\widetilde{H}_{\omega'}(n)|^2\\
&\quad -\left(N^{-1}\sum_{n\in \Z}|H_{\omega'}(n)-\widetilde{H}_{\omega'}(n)|^2\right)^{1/2}.
\end{align*}
On the other hand,
\begin{align*}
|H_{\omega'}(n)-\widetilde{H}_{\omega'}(n)|&\leq \max\{|H_{\omega'}(n)|-1,0\}\\
&\leq \Bigabs{\E_{\vec h\in [-N,N]^{k}} \prod_{\omega_1\in \mathcal I'\backslash\set{\omega'} } \nu(n+(\omega_1-\omega')\cdot \vec h)-1}\\
&\leq \sum_{\substack{\mathcal{J}\subseteq \mathcal{I}'\setminus \{\omega'\}\\ \mathcal{J}\neq \emptyset}} \biggabs{\E_{\vec h\in [-N,N]^{k}}\prod_{\omega_1\in \mathcal{J}}(\nu-1)(n+(\omega_1-\omega')\cdot \vec h)}.
\end{align*}
Building on this, we can conclude from the Gowers--Cauchy--Schwarz inequality and the assumption~\eqref{eq:nuassumption}  that
\begin{equation}\label{eq:F-tildeF}
N^{-1}\sum_{n\in \Z}|H_{\omega'}(n)-\widetilde{H}_{\omega'}(n)|^2\ll_k \|\nu-1\|_{U^{2k}[N]}\ll_k \exp(-(\log(1/\varepsilon))^{D_k}).
\end{equation}
Now, in view of~\eqref{eq:F2bound} and the fact that $D_k$ is large enough in~\eqref{eq:nuassumption}, we have
\begin{align*}
\varepsilon_{\mathcal{I}'}^2 &\ll_k N^{-1}\sum_{n\in \Z}\overline{H_{\omega'}(n)}\widetilde{H}_{\omega'}(n)\\
&\ll_k N^{-1}\sum_{n\in \Z} \widetilde{H}_{\omega'}(n) \E_{\vec h\in [-N,N]^{k}} \prod_{\omega_1\in \mathcal I'\backslash\set{\omega'} } \mathcal C^{|\omega_1|+1}f(n+(\omega_1-\omega')\cdot \vec h)\\
&\qquad \times\prod_{\omega_2\in \mathcal I''}\overline{ \psi_{\omega_2} (n+(\omega_2-\omega') \cdot \vec h)}.
\end{align*}
We then conclude from Gowers--Cauchy--Schwarz inequality  that
\[
\eps_{\mathcal{I'}}^{O_k(1)} \ll_k \|\widetilde{H}_{\omega'}\|_{U^{k}[N]}.
\]
Applying Theorem~\ref{inverse} to the $1$-bounded function $\widetilde{H}_{\omega'}$, we obtain a nilsequence  $\psi_{\omega'} = F_{\omega'} (g_{\omega'} (\cdot) \Gamma_{\omega'})$, where $F_{\omega'}\colon G_{\omega'}/\Gamma_{\omega'}\to \C$ is $1$-Lipschitz and the nilmanifold $G_{\omega'}/\Gamma_{\omega'}$ has the required degree, complexity, and dimension bounds,  and such that
\[
\Bigabs{N^{-1}\sum_{n\in \Z}  \widetilde{H}_{\omega'} (n) \overline{\psi_{\omega'} (n)}}  \gg_k \exp \bigbrac{-(\log(1/\eps_{\mathcal{I}'}))^{O_k(1)}} \gg_k \exp\bigbrac{-(\log(1/\eps ))^{O_k(C_{\mathcal I',k})}}.
\]
Choose a number $C_{\mathcal I'\backslash\set{\omega'},k}\gg_k C_{\mathcal I',k}$  so that the above left-hand side is in fact bounded below by $\exp (-(\log(1/\eps))^{C_{\mathcal I'\backslash\set{\omega'},k}})$.
By the $1$-boundedness of $\psi_{\omega'}$, the Cauchy--Schwarz inequality, the estimate~\eqref{eq:F-tildeF}, and the assumption that $D_k$ is large enough, we conclude that also
\[
\Bigabs{N^{-1}\sum_{n\in \Z}  H_{\omega'} (n) \overline{\psi_{\omega'} (n)}} \gg_k \exp (-(\log(1/\eps))^{C_{\mathcal I'\backslash\set{\omega'},k}}),
\]
so 
\begin{align}\begin{split}
&\Bigabs{N^{-1}\sum_{n\in \Z} \overline{\psi_{\omega'} (n)} \E_{\vec h\in [-N,N]^{k}} \prod_{\omega_1\in \mathcal I'\backslash\set{\omega'} } \mathcal C^{|\omega_1|}f(n+(\omega_1-\omega')\cdot \vec h) \prod_{\omega_2\in \mathcal I''} \psi_{\omega_2} (n+(\omega_2-\omega') \cdot \vec h) }\\
&\gg_k \exp (-(\log(1/\eps))^{C_{\mathcal I'\backslash\set{\omega'},k}}).
\end{split}
\end{align}
This completes the proof of the induction claim after the linear change of variables $n\to n+\omega'\cdot \vec h$ and possibly renaming $\overline{\psi_{\omega'}}$ as $\psi_{\omega'}$.
Now the theorem is proved.
\end{proof}

\section{Transferred regularity lemmas (dense model)}\label{sec3}

In this section, we establish a quantitative transferred regularity lemma. Roughly speaking, it asserts that if a function $f$ is bounded by a pseudorandom majorant, then $f$ can be decomposed into a structural component and a uniform component. The corresponding $1$-bounded case was studied in~\cite{GT-regularity}. We emphasise that our notion of a structural component differs from that in~\cite{GT-regularity}, where the structural part is stronger in the sense that it is a nilsequence. When the structural component is $1$-bounded, this corresponds to the so-called \emph{dense model}, which has been investigated qualitatively in~\cite{GT-prime} and quantitatively (by different methods) in~\cite{Go-dense} and~\cite{reing}. We begin by recalling some preliminary definitions.

\begin{definition}[Factor]
Let $N\geq 1$. A \emph{factor} $\CB$ of the set $[N]$ is a partition of $[N]$ into disjoint nonempty subsets. That is, $[N] = \bigsqcup_{B \in \CB} B$. For $x \in [N]$, we denote by $\CB(x) \in \CB$ the unique \emph{atom} (or \emph{cell}) containing $x$. Given two factors $\CB$ and $\CB'$, we say that $\CB'$ \emph{refines} $\CB$ if every atom of $\CB$ is a union of atoms of $\CB'$. The \emph{join} of factors $\CB_1,\dots,\CB_d$ is the partition
\[
\CB_1\vee \cdots \vee \CB_d = \set{B_1\cap \cdots \cap B_d\colon  B_i\in \CB_i},
\]
with empty atoms discarded.
\end{definition}

\begin{definition}[Factor of complexity $d$ and resolution $K$]\label{factor-res}
Let $N\geq 1$ and let $d,s,M$ be natural numbers.
Let $h\colon [N]\to \R$ be a real-valued function and let $K \geq 1$ be a parameter. The \emph{factor induced by $h$ of resolution $K$} is defined by
\[
\CB_{h,K} \coloneqq  \left\{ \Bigl\{ n \in [N] \colon  \tfrac{j}{K} < h(n) \leq \tfrac{j+1}{K} \Bigr\}\colon j\in \mathbb{Z}\right\},
\]
with empty atoms discarded.

A \emph{factor of degree $s$, dimension at most $d$ and complexity at most $(M,1)$ and resolution $K$} is a join of the form
\[
\CB = \CB_{h_1,K} \vee \cdots \vee \CB_{h_{m},K},
\]
for some integer $m \leq M$ and some nilsequences $h_1,\ldots, h_{m}\colon [N]\to \mathbb{R}$ of degree $s$, dimension at most $d$ and complexity at most $(M,1)$. In particular, each $h_i$ is 1-bounded. We refer to the factors $\CB_{h_i,K}$ as the \emph{generators} of $\CB$.
\end{definition}

Before stating the main result of this section, we address a subtle issue with the above decomposition: many points may lie near the boundaries between atoms, which is undesirable. To avoid this, we adopt the notion of \emph{regular factors}, following \cite[Definition~3.5]{LSS}.

\begin{definition}[Regular factor]\label{reg-fac}
 Let $N,K\geq 1$, $C>0$, and let $h\colon [N]\to \mathbb{R}$ be a function. The factor $\CB_{h,K}$ is \emph{$C$-regular} if
\[
\sup_{r>0} \frac{1}{2r} \frac{\#\set{ n\in [N]\colon  \norm{K \cdot h(n)}_\T \leq r}}{N}\leq C.
\]
More generally, a factor $\CB$ of dimension at most $d$, complexity at most $(M,1)$ and resolution $K$ is said to be $C$-\emph{regular} if all of its generators are $C$-regular.	
\end{definition}

The following fact guarantees that any induced factor can be made regular after a small shift.

\begin{lemma}\label{lem-reg}
Let $N,K\geq 1$. There exists an absolute constant $C>0$ such that the following statement holds. Given any function $h\colon [N]\to \R$, there exists a shift $t\in [0,\frac{1}{K})$ such that $\CB_{h-t, K}$ is $C$-regular.	
\end{lemma}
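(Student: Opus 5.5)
The plan is to reduce the statement to a maximal-function (Hardy--Littlewood covering) argument for a single probability measure on the circle. Replacing $h$ by $h-t$ changes $\norm{K h(n)}_\T$ into $\norm{K h(n) - Kt}_\T$. As $t$ ranges over $[0,1/K)$, the quantity $s\coloneqq Kt$ ranges over $[0,1)\cong \T$. So it suffices to find $s\in\T$ with
\[
\sup_{r>0}\frac{1}{2r}\,\mu\bigl(\{x\in\T\colon \norm{x-s}_\T\leq r\}\bigr)\leq C,
\]
where $\mu$ is the probability measure on $\T$ given by the pushforward of the uniform measure on $[N]$ under $n\mapsto K h(n) \bmod 1$. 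The left-hand side is exactly the (centred) Hardy--Littlewood maximal function $M\mu(s)$ of the measure $\mu$.

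First, I will note that for $r\geq 1/2$ the ball is all of $\T$. The ratio is then at most $1/(2r)\leq 1$, so only $r<1/2$ matters. There the ball is an arc of Lebesgue measure exactly $2r$, and the ratio is $\mu(I)/|I|$ for the closed arc $I$ of length $2r$ centred at $s$.

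Second, I will invoke the weak-type $(1,1)$ bound for the maximal function of a finite measure on $\T$:
\[
\meas\{s\in\T\colon M\mu(s)>\lambda\}\leq \frac{3\,\mu(\T)}{\lambda}=\frac{3}{\lambda}.
\]
This follows from the Vitali covering lemma in one dimension. Let $E_\lambda$ be the set on the left. For any compact $L\subseteq E_\lambda$, cover it by finitely many arcs $I$ with $\mu(I)>\lambda|I|$. Extract a disjoint subfamily whose threefold enlargements cover $L$. This gives $\meas(L)\leq 3\sum|I|<3\lambda^{-1}\sum\mu(I)\leq 3/\lambda$. One should check that $E_\lambda$ is measurable, or simply work with inner measure, which suffices for what follows.

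Third, I will take $\lambda=C=4$. Then the exceptional set has measure at most $3/4<1$, so some $s\in[0,1)$ satisfies $M\mu(s)\leq 4$. Setting $t=s/K\in[0,1/K)$ gives that $\CB_{h-t,K}$ is $4$-regular, with $C=4$ independent of $N$, $K$ and $h$.

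The only mildly delicate step is the covering argument on the circle rather than on $\R$, together with measurability. Since $\mu$ is a finite sum of point masses, it is cleaner to argue directly. For each $s$ in the bad set, pick an arc $I_s$ centred at $s$ with $\mu(I_s)>4|I_s|$; such arcs have length less than $1/4$ and contain atoms. Then apply the finite Vitali lemma to a compact subset of the bad set. Alternatively, one can discretise $s$ to a fine grid and pass to a limit, but the Vitali approach is the one I would pursue.
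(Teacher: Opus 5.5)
Your proposal is correct and takes essentially the same approach as the paper: both apply the Hardy--Littlewood maximal (weak-type $(1,1)$) inequality to the distribution of $Kh(n) \bmod 1$ on the circle and choose the shift $Kt$ outside the exceptional set. The difference is that you prove the maximal inequality directly via the one-dimensional Vitali covering lemma (with the inner-measure remark handling measurability), which yields the explicit constant $C=4$, whereas the paper simply cites the Green--Tao argument.
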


\begin{proof}
We define a function $M_{h,K}\colon [0,1] \to \R_{\geq 0}\cup\{\infty\}$ by setting
\[
M_{h,K} (t) = \sup_{r>0} \frac{1}{2r} \frac{\# \set{ n\in [N]\colon \norm{K\cdot h(n) -t}_\T \leq r}}{N}.
\]
It follows from the Hardy--Littlewood maximal inequality (see~\cite[Proof of Corollary 2.3]{GT-regularity}) that for any $\lambda>0$
\[
\meas \set{t\in [0,1)\colon M_{h,K} (t)\geq \lambda} \ll \lambda^{-1}.
\]

Thus, taking $\lambda=C$, it follows that for all real numbers $t\in [0,1)$ outside of a set of Lebesgue measure $O(C^{-1})$ we have the inequality
\[
 \sup_{r>0} \frac{1}{2r} \frac{1}{N} \# \set{ n\in [N]\colon \norm{K\cdot h(n) -t}_\T \leq r} \leq C.
\]
In light of Definition~\ref{reg-fac}, this implies that, when $C$ is large enough, there exists some $t\in [0,1)$ such that $\CB_{h-\frac{t}{K},K}$ is $C$-regular. The lemma follows by renaming $t/K$ as $t\in [0,1/K)$.
\end{proof}


\subsection{Transferred regularity lemma I}

In this subsection, we prove Lemma~\ref{tra-wea}, which is a transferred regularity lemma. In the following subsection, we simplify this result to a more applicable form, which is Proposition~\ref{tra-wea-2}.

\begin{definition}[Conditional expectation]\label{cond-ex}
Let $N\geq 1$. Let $\CB$ be a factor of $[N]$ and let $f \colon  [N] \to \C$ be a function. The \emph{conditional expectation} of $f$ with respect to $\CB$ is the function $\Pi_\CB f \colon  [N] \to \C$ defined by
\[
\Pi_\CB f(x) = \E_{y\in \CB(x)} f(y).
\]	
\end{definition}

By construction, $\Pi_\CB f$ takes constant values on the atoms of $\CB$; that is, $\Pi_\CB f(x) = \Pi_\CB f(y)$ whenever $\CB(x) = \CB(y)$. We will frequently use the identity
\begin{align}\label{eq:conditionalexp}
 \mathbb{E}_{x\in B}\Pi_{\mathcal{B}}f(x)=\mathbb{E}_{x\in B}f(x)   
\end{align}
for $B\in \mathcal{B}$.

Our transferred weak regularity lemma (Lemma~\ref{tra-wea}) relies on the following orthogonality identity.

\begin{lemma}[Pythagoras's theorem]\label{pythagoras}
Let $N\geq 1$. Let $\CB$ and $\CB'$ be factors of $[N]$, with $\CB'$ refining $\CB$. Let $f\colon [N]\to\C$ be a function. Then
\[
\norm{\Pi_{\CB'} f}_2^2 = \norm{\Pi_{\CB} f}_2^2 + \norm{\Pi_{\CB'} f - \Pi_\CB f}_2^2.
\]	
\end{lemma}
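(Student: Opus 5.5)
The plan is to expand the square of $\Pi_{\CB'}f = \Pi_\CB f + (\Pi_{\CB'}f - \Pi_\CB f)$ and show that the cross term vanishes. This gives
\[
\norm{\Pi_{\CB'} f}_2^2 = \norm{\Pi_{\CB} f}_2^2 + \norm{\Pi_{\CB'} f - \Pi_\CB f}_2^2 + 2\,\mathrm{Re}\,\ang{\Pi_{\CB'}f - \Pi_\CB f, \Pi_\CB f},
\]
so it suffices to prove $\ang{\Pi_{\CB'}f - \Pi_\CB f, \Pi_\CB f}=0$, i.e. that $\Pi_{\CB'}f - \Pi_\CB f$ is orthogonal to every function that is constant on the atoms of $\CB$.

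To establish the orthogonality, I will split the inner product $\E_{n\in[N]}$ as a sum over atoms $B\in\CB$, weighted by $|B|/N$. On a fixed atom $B$, the function $\Pi_\CB f$ equals the constant $c_B=\E_{y\in B}f(y)$. Hence the contribution of $B$ is $\bar{c_B}\,\frac{1}{N}\sum_{x\in B}(\Pi_{\CB'}f(x)-\Pi_\CB f(x))$.

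The key point is that, since $\CB'$ refines $\CB$, the atom $B$ is a disjoint union of atoms $B'\in\CB'$. Applying the identity~\eqref{eq:conditionalexp} to each such $B'$ gives $\sum_{x\in B'}\Pi_{\CB'}f(x)=\sum_{x\in B'}f(x)$. Summing over the $B'\subseteq B$ yields $\sum_{x\in B}\Pi_{\CB'}f(x)=\sum_{x\in B}f(x)$. By~\eqref{eq:conditionalexp} for $\CB$ itself, this also equals $\sum_{x\in B}\Pi_\CB f(x)$. So each atom contributes zero, and the cross term vanishes.

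There is no real obstacle here. The only point requiring care is the use of the refinement hypothesis, which is exactly what lets the sum of $\Pi_{\CB'}f$ over a $\CB$-atom be computed by~\eqref{eq:conditionalexp}.
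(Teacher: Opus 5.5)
Your proposal is correct and matches the paper's proof. Both expand the square and kill the cross term by summing atom-by-atom over $\CB$, using the refinement hypothesis and \eqref{eq:conditionalexp} to get $\E_{x\in B}\Pi_{\CB'}f(x)=\E_{x\in B}f(x)=\E_{x\in B}\Pi_\CB f(x)$ for each $B\in\CB$.
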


\begin{proof}
We may write $\Pi_{\CB'} f = \Pi_\CB f + (\Pi_{\CB'} f - \Pi_\CB f)$. By employing the identity $|a+b|^2 = |a|^2 + 2\textnormal{Re}(a\bar{b})+ |b|^2$, it suffices to show that 
\begin{align}\label{orthog}
\ang{ \Pi_{\CB} f , \Pi_{\CB'} f - \Pi_{\CB} f}=0.	
\end{align}
Let $[N] = \bigsqcup_{B \in \CB} B$, and choose a representative $x_B \in B$ for each atom $B \in \CB$. Since $\Pi_{\CB} f$ is constant on each atom $B$, we have $\Pi_{\CB} f(x) = \Pi_{\CB} f(x_B)$ for all $x \in B$. Hence
\[
\E_{x\in [N]} \Pi_\CB f(x) \overline{\Pi_{\CB'} f(x)} = \sum_{B \in \CB} \frac{|B|}{\lfloor N\rfloor} \Pi_{\CB} f(x_B) \E_{x\in B} \overline{\Pi_{\CB'} f(x)}.
\]
But if $x \in B$, then $\CB'(x) \subseteq \CB(x)=B$. Averaging over $x\in B$ gives
\[
\E_{x\in B} \Pi_{\CB'} f(x) = \E_{x\in B} \E_{y\in \CB'(x)} f(y) =\E_{y\in B} f(y)  = \E_{x\in B}\Pi_\CB f(x).
\]
Hence, combining the above two estimates yields
\begin{align*}
\E_{x\in [N]} \Pi_\CB f(x) \overline{\Pi_{\CB'} f(x)} 
&= \sum_{B\in \CB} \frac{|B|}{\lfloor N\rfloor} \Pi_\CB f(x_B) \E_{x\in B}\overline{\Pi_\CB f(x)}\\
&= \frac{1}{N}  \sum_{B\in \CB} \sum_{x\in B}\abs{\Pi_\CB f(x)}^2\\
&=\norm{\Pi_{\CB}f}_2^2,
\end{align*}
which establishes the orthogonality~\eqref{orthog}. This completes the proof.
\end{proof}

\begin{lemma}[Transferred weak regularity lemma]\label{tra-wea}
Let $N\geq 1$, let $C\geq 1$ be a large constant, let $k\geq 2$ be a natural number, and let $0<\eps<1/3$ be a parameter. Suppose $\nu\colon [N]\to\R_{\geq 0}$ satisfies
\begin{align}\label{pseudo-1}
\norm{\nu -1}_{U^{2k}[N]} \leq \exp \bigbrac{ -(\log(1/\eps))^{D_k}}
\end{align}
for some constant $D_k>1$ sufficiently large in terms of $k$. 
Let $f\colon [N]\to \C$ be a function with $|f|\leq \nu$.

Assume that for every $C$-regular factor $\CB$ of degree $k-1$, dimension at most $(\log(1/\eps))^{D_k/10}$, complexity at most $(\exp ((\log(1/\eps))^{D_k/10}), 1)$, and resolution at most $\exp ((\log(1/\eps))^{D_k/10})$, the following hold:
\begin{enumerate}
	\item ($\ell^2$-norm boundedness).
\begin{equation}\label{l2-bd}
\|\Pi_{\CB} f  \|^2_2 \leq 10;
\end{equation}
\item there exists a set $\Omega_\CB \subset[N]$, which is a union of atoms of $\CB$, such that
\begin{align}\label{excep-ass-3.7}
	\Bigabs{\E_{n\in [N]} (|f|+1)(n) 1_{[N]\backslash \Omega_\CB}(n)} \leq \exp(-(\log(1/\eps))^{D_k})
\end{align}
and such that
\begin{equation}\label{1-bdd-assump}
\abs{\Pi_\CB f(n)} \leq 2 \text{ whenever } n\in \Omega_\CB.	
\end{equation}
\end{enumerate}

Then there exists a number $C_k\in (1,D_k/10)$, and parameters
\[
d\leq (\log(1/\eps))^{C_k}, \qquad M,T,K\leq \exp((\log(1/\eps))^{C_k}),
\]
together with a factor $\CB$ which is a $T$-fold join of $C$-regular factors $\CB_{h_i,K}$, where each has resolution $K$ and each $h_i$ is a nilsequence of degree $k-1$, dimension at most $d$, and complexity at most $(M,1)$, and a set $\Omega\subset[N]$ that is a union of atoms of $\CB$ such that
\[
\Bigabs{\E_{n\in [N]} (|f|+1)(n) 1_{[N]\backslash \Omega}(n)} \leq \exp(-(\log(1/\eps))^{D_k})
\]
and
\[
\norm{(f -\Pi_\CB f)\cdot1_\Omega }_{U^{k}[N]} \leq \eps.
\]	
\end{lemma}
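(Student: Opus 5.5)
We will run an energy increment argument. Start with the trivial factor $\CB_0=\{[N]\}$, which is $C$-regular of every relevant type (no generators, $T=0$). At stage $i$ we have a factor $\CB_i$ that is a $T_i$-fold join of $C$-regular factors $\CB_{h_j,K}$, with $h_j$ nilsequences of degree $k-1$, dimension $\le d$ and complexity $\le (M,1)$. Fix the resolution $K=\exp((\log(1/\eps))^{C_k'})$ from the outset; $d$ and $M$ are set by Theorem~\ref{tra-inv}. Let $\Omega_i\coloneqq\Omega_{\CB_i}$ be the set supplied by hypothesis (2), and consider $f_i\coloneqq (f-\Pi_{\CB_i}f)1_{\Omega_i}$. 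If $\norm{f_i}_{U^k[N]}\le\eps$, we stop and output $\CB=\CB_i$, $\Omega=\Omega_i$; the exceptional-set bound is exactly \eqref{excep-ass-3.7}. Otherwise we note that $|f_i|\le \nu+2 \le 3\cdot\frac{\nu+2}{3}$ on $\Omega_i$ by \eqref{1-bdd-assump}. Setting $\nu'=(\nu+2)/3$, we still have $\norm{\nu'-1}_{U^{2k}[N]}\le \norm{\nu-1}_{U^{2k}[N]}/3$, so Theorem~\ref{tra-inv} applies to $f_i/3$ with majorant $\nu'$. It yields a $1$-Lipschitz nilsequence $\psi=F(g(\cdot)\Gamma)$ of degree $k-1$, dimension $\le(\log(1/\eps))^{C}$ and complexity $\le\exp((\log(1/\eps))^C)$ with $|\langle f_i,\psi\rangle|\ge \eta\coloneqq\exp(-(\log(1/\eps))^{C})$.

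Next we convert this correlation into an energy increment. Split $\psi$ into its real and imaginary parts $h,h'$; each is a real nilsequence of the same complexity with Lipschitz norm $\le 1$. Apply Lemma~\ref{lem-reg} to shift them by $t<1/K$, so that $\CB_{h-t,K}$ and $\CB_{h'-t',K}$ are $C$-regular, and set $\CB_{i+1}=\CB_i\vee\CB_{h-t,K}\vee\CB_{h'-t',K}$. The shifted functions $h-t$ are still nilsequences of controlled complexity (constants are Lipschitz). On each atom of $\CB_{i+1}$, $\psi$ is within $O(1/K)$ of a constant, so $\psi=\psi_{i+1}+O(1/K)$ with $\psi_{i+1}$ being $\CB_{i+1}$-measurable. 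Then
\[
\eta\le |\langle f_i,\psi_{i+1}\rangle| + O(K^{-1}\E(\nu+3)) .
\]
By \eqref{eq:monotone} the error is $\ll K^{-1}$, which is negligible. We also have $\langle f1_{\Omega_i},\psi_{i+1}\rangle=\langle \Pi_{\CB_{i+1}}f,\psi_{i+1}1_{\Omega_i}\rangle$, because $\Omega_i$ is a union of atoms of $\CB_i$, hence of $\CB_{i+1}$. Similarly $\langle \Pi_{\CB_i}f 1_{\Omega_i},\psi_{i+1}\rangle=\langle \Pi_{\CB_i} f,\psi_{i+1}1_{\Omega_i}\rangle$. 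Hence $\eta/2\le|\langle \Pi_{\CB_{i+1}}f-\Pi_{\CB_i}f,\psi_{i+1}1_{\Omega_i}\rangle|\le \norm{\Pi_{\CB_{i+1}}f-\Pi_{\CB_i}f}_2$, and Lemma~\ref{pythagoras} gives $\norm{\Pi_{\CB_{i+1}}f}_2^2\ge\norm{\Pi_{\CB_i}f}_2^2+\eta^2/4$.

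As long as $\CB_{i+1}$ remains within the class covered by the hypotheses, \eqref{l2-bd} caps the energy at $10$. The iteration therefore stops after at most $40\eta^{-2}=\exp(O((\log(1/\eps))^C))$ steps. This gives $T\le 2\cdot 40\eta^{-2}$ generators, and all parameters are $\exp((\log(1/\eps))^{C_k})$ for a suitable $C_k$.

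The main bookkeeping obstacle is consistency of the exponents. The factor class in the hypothesis allows at most $\exp((\log(1/\eps))^{D_k/10})$ generators, dimension and resolution, while Theorem~\ref{tra-inv} requires $\norm{\nu'-1}_{U^{2k}}\le\exp(-(\log(3/\eps))^{D_k'})$ with $D_k'$ depending only on $k$. So we must choose $C_k$, which is determined by the exponent in Theorem~\ref{tra-inv} applied at scale $\eps/3$, and then take $D_k>10C_k$ large. Since all constants in Theorem~\ref{tra-inv} depend only on $k$, this choice is possible. One more check is needed: the join of nilsequence factors is interpreted per Definition~\ref{factor-res} as a join of single-nilsequence factors, each of complexity $(M,1)$, with the number of them $T\le M$. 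This requires taking $M$ to be the maximum of the complexity bound and $T$, which is harmless.
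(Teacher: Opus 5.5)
Your proposal is correct and follows essentially the same energy-increment argument as the paper: Theorem~\ref{tra-inv} is applied to $(f-\Pi_{\CB_i}f)1_{\Omega_i}$ with the renormalised majorant $(\nu+2)/3$, the resulting nilsequence is shifted via Lemma~\ref{lem-reg} to give a $C$-regular generator, Pythagoras accumulates the gain, and the cap \eqref{l2-bd} forces termination after $\exp(O((\log(1/\eps))^{C}))$ steps. The differences are cosmetic. You add both the real and imaginary parts as generators at each step, and you extract the increment from self-adjointness of $\Pi_{\CB_{i+1}}$ rather than the paper's $\ell^1$ bound followed by Cauchy--Schwarz. You also make the division by $3$ in the application of Theorem~\ref{tra-inv} explicit, which the paper glosses over.
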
 

\begin{proof}
The overall strategy is to apply the \emph{energy increment argument}. 

Suppose that $C_k$ is sufficiently large in terms of $k$ and that $D_k$ is sufficiently large in terms of $C_k$. The following is the key claim powering this increment argument.

\textbf{Key claim.} Let $f\colon [N]\to \mathbb{C}$ be as in the lemma and set 
$$K=\exp((\log(1/\eps))^{C_k})/10,$$
let $d\in \mathbb{N}$, and let $C>0$ be a large absolute constant.
Let $\mathcal{B}$ be a $C$-regular factor of $[N]$ of complexity $(d+\frac{1}{2}\exp((\log(1/\varepsilon))^{C_k}),1)$ and resolution $K$. Then at least one of the following alternatives holds.
\begin{enumerate}
\item[(i)] 
 There exists a set $\Omega_\CB\subset[N]$ with properties~\eqref{excep-ass-3.7}--\eqref{1-bdd-assump}, and such that  
\begin{align} \label{assump-tr}
\norm{ (f-\Pi_{\CB} f)\cdot 1_{\Omega_\CB} }_{U^k[N]} \leq \eps.	
\end{align}
\item[(ii)]
There exists a refinement $\mathcal{B}'$ of $\mathcal{B}$ having  complexity at most $(d+1+\frac{1}{2}\exp((\log(1/\varepsilon))^{C_k}),1)$ and resolution $K$, and with the generators of $\mathcal{B}'$ being $C$-regular factors, such that
\[
\norm{ \Pi_{\CB'}f - \Pi_{\CB}f }_2^2  \geq \exp\bigbrac{- (\log(1/\eps) )^{C_{k}}}/101
\]
and
\[
\mathcal{B}'=\mathcal{B}\vee \mathcal{B}_{h,K}
\]
for some nilsequence $h$ of degree $k-1$, dimension $\leq (\log(1/\eps))^{C_k}$ and complexity at most $(\exp((\log(1/\eps))^{C_k})/2,1)$.
\end{enumerate}

\textbf{Proof of key claim.}

Without loss of generality, we may assume that the inequality~\eqref{assump-tr} fails. 

By assumption, there exists a set $\Omega_{\CB}\subseteq [N]$ such that $|\Pi_{\CB}f(n)|\le 2$ for all $n\in \Omega_{\CB}$. 
Thus, $\nu+2$ is a majorant of $(f-\Pi_{\CB} f)\cdot 1_{\Omega_\CB}$ on $[N]$. We normalise this majorant by setting $\tilde \nu =\frac{\nu+2}{3}$, so $\tilde \nu$ satisfies the pseudorandomness assumption~\eqref{pseudo-1}. For convenience, we abuse the notation by using $\nu$ to denote $\tilde\nu$ in the remainder of the proof. 

We first note that, if~\eqref{assump-tr} fails, then one can apply Theorem~\ref{tra-inv} to the function $(f-\Pi_{\CB} f)\cdot 1_{\Omega_\CB}$ (viewed as a function on $[N]$ by extending it by 0 outside $\Omega_\CB$). This yields a nilsequence $F_{\mathcal{B}}(g_{\mathcal{B}}(n)\Gamma_{\mathcal{B}})$ of degree $k-1$, of dimension at most $(\log(1/\eps))^{C_{k}}$ and of complexity at most $(\exp((\log(1/\varepsilon))^{C_{k}/2})/2,1)$, such that
\begin{align}\label{eq:fPiB}
 \bigabs{ \E_{n\in [N]} \bigbrac{ f-\Pi_{\CB} f }(n) 1_{\Omega_\CB}(n) \overline{F_{\mathcal{B}}(g_{\mathcal{B}}(n)\Gamma_{\mathcal{B}})}} \geq \exp\bigbrac{- (\log(1/\eps))^{C_{k}/2}}/2. 
\end{align}
Taking the real or imaginary part if necessary, we may assume $F_{\mathcal{B}}$ is real-valued, and then the bound on the right-hand side of~\eqref{eq:fPiB} becomes $\frac{1}{4}\exp\bigbrac{- (\log(1/\eps) )^{C_{k}/2}}$. Observe that $|\Pi_{\CB}f|\leq \Pi_{\CB}\nu$ and that the pseudorandomness condition~\eqref{pseudo-1} implies $\|\nu\|_{U^{2k}[N]} \leq 1+\exp \bigbrac{ -(\log(1/\eps))^{D_k}}$. Thus, as in~\eqref{eq:monotone}, we have
\begin{align*}
\|\Pi_{\CB}\nu\|_1, \|\nu\|_1\ll_k \|\nu\|_{U^{2k}[N]} \ll_k 1.   
\end{align*}
Consequently, by Definition~\ref{cond-ex}, we obtain
\begin{align}\label{l1-norm}
\norm{|f|+|\Pi_{\CB }f|}_1 \leq \norm{\nu}_1+ \norm{\Pi_{\CB}\nu}_1 \ll_k 1+ \frac{1}{N}\sum_{B \in \CB} \bigabs{\sum_{x \in B}\nu(x)} \ll_k 1+\norm{\nu}_1 \ll_k 1.
\end{align} 
Combining inequalities~\eqref{eq:fPiB} and~\eqref{l1-norm}, we have from the triangle inequality that for any $t \in [0,1/K)$,
\begin{align*}
	&\Bigabs{ \E_{n\in [N]}\bigbrac{ f  -\Pi_{\CB} f }(n) 1_{\Omega_\CB}(n)  \bigbrac{ F_{\mathcal{B}} (g_{\mathcal{B}} (n)\Gamma_{\mathcal{B}}) -t} }\\
	\geq& \Bigabs{ \E_{n\in [N]}  \bigbrac{ f  -\Pi_{\CB} f }(n) 1_{\Omega_\CB}(n) F_{\mathcal{B}} (g_{\mathcal{B}} (n)\Gamma_{\mathcal{B}})} -\frac{\norm{|f|+|\Pi_{\CB}f|}_1}{K}\\
	\geq& \frac{1}{5}\exp\bigbrac{- (\log(1/\eps))^{C_{k}/2}}
\end{align*}
if $C_k$ is chosen large enough so that the implied constant in~\eqref{l1-norm} can be absorbed.

In light of Lemma~\ref{lem-reg}, we may choose $t$ so that the induced factor $\CB_{h_{\mathcal{B}},K}$ with $h_{\mathcal{B}}(n)\coloneqq F_{\mathcal{B}}(g_{\mathcal{B}}(n)\Gamma_{\mathcal{B}})-t$ is $C$-regular and so that 
\begin{align}\label{eq:fPi_lower}
\bigabs{ \E_{n\in [N]} \bigbrac{ f  -\Pi_{\CB} f }(n) 1_{\Omega_\CB}(n) h_{\mathcal{B}} (n) }\geq \frac{1}{5} \exp\bigbrac{- (\log(1/\eps))^{C_{k}/2}}.
\end{align}

We then define $\CB' = \CB \vee \CB_{h_{\mathcal{B}}, K}$. This factor is $C$-regular and has complexity at most $(d+1+\frac{1}{2}\exp((\log(1/\varepsilon))^{C_k}),1)$ and resolution $K$.

It follows from the decomposition $[N]=\bigsqcup_{B \in \CB'} B$, Definition~\ref{cond-ex}, estimate~\eqref{l1-norm} and the fact that $|h_{\mathcal{B}}(n)-h_{\mathcal{B}}(n_B)|\leq \frac{1}{K}$ whenever $n,n_B$ are in the same atom $B\in \mathcal{B}'$, that
\begin{align*}
&\bigabs{ \E_{n\in [N]}\bigbrac{ f  -\Pi_{\CB} f }(n) 1_{\Omega_\CB}(n) h_{\mathcal{B}} (n) }\\
=& \Bigabs{\frac{1}{N}\sum_{B\in \CB'} \sum_{n\in B} \bigbrac{ f  -\Pi_{\CB} f }(n) 1_{\Omega_\CB}(n)  h_{\mathcal{B}}(n)}\\
\leq& \Bigabs{\frac{1}{N}\sum_{B\in \CB'} h_{\mathcal{B}}(n_B)\sum_{n\in B} \bigbrac{ f  -\Pi_{\CB} f }(n) 1_{\Omega_\CB}(n) } \\
&\quad+  \frac{1}{N}\sum_{B\in \CB'} \sum_{n\in B} \bigabs{(f-\Pi_{\CB }f)(n) }\cdot \bigabs{h_{\mathcal{B}} (n) -h_{\mathcal{B}}(n_B)} \\
   \leq& 2\cdot \frac{1}{N}\sum_{B\in \CB'} \bigabs{ \sum_{n\in B} (f -\Pi_{\CB} f) (n) } +\frac{\norm{|f|+|\Pi_{\CB }f|}_1}{K}\\
\leq& \frac{2}{N}\sum_{B\in \CB'} \sum_{n\in B} \bigabs{\Pi_{\CB'}f (n) -\Pi_{\CB}f(n) } +O_k(K^{-1}),
\end{align*}
since $\|h_\CB\|_{\infty} \le 1$, $\Pi_{\CB} f$ is constant on each atom of $\CB'$, and $\Omega_{\CB}$ is a union of atoms of $\CB$ (thus a union of atoms of $\CB'$).

Combining this with the lower bound~\eqref{eq:fPi_lower} and applying the Cauchy--Schwarz inequality gives us
\[
\| \Pi_{\CB'} f - \Pi_{\CB} f\|_2^2 \geq \frac{1}{101} \exp (-(\log(1/\eps) )^{C_{k}}),
\]
so the key claim follows.

\textbf{Iteration step.} We then iterate the key claim to prove the lemma. We define the initial factor $\CB^{(0)} = \set{[N]}$. With the notation of Definition~\ref{factor-res}, we have $\mathcal{B}^{(0)}=\mathcal{B}_{1/10,2}$. This factor has complexity $(1,1)$ and is $C$-regular for any large absolute constant $C$. 

We may assume that
\begin{align*}
\norm{ f-\Pi_{\CB^{(0)}} f}_{U^k[N]} >\eps	
\end{align*}
since otherwise we are done. Hence, by the key claim there exists a refinement $\mathcal{B}^{(1)}$ of $\mathcal{B}^{(0)}$ such that
\[
\norm{ \Pi_{\CB^{(1)}}f - \Pi_{\CB^{(0)}}f }_2^2  \geq \exp\bigbrac{- (\log(1/\eps))^{C_{k}}}/101
\]
and
\[
\mathcal{B}^{(1)}=\mathcal{B}^{(0)}\vee \mathcal{B}_{h_1,K}
\]
for some nilsequence $h_1$ of degree $k-1$, dimension $\leq (\log(1/\eps))^{C_k}$ and complexity at most $(\exp((\log(1/\eps))^{C_k})/2,1)$. Moreover, $\CB^{(1)}$ is $C$-regular.

Suppose then that for some integer $j\geq 1$, we have defined factors $\mathcal{B}^{(i)}$ of degree $k-1$, complexity at most $(i+\exp((\log(1/\eps))^{C_k})/2,1)$ and resolution at most $K$ for all $0\leq i\leq j$, such that $\mathcal{B}^{(i+1)}$ refines $\mathcal{B}^{(i)}$ for all $0\leq i\leq j-1$. Suppose additionally that
\[
\norm{ \Pi_{\CB^{(i+1)}}f - \Pi_{\CB^{(i)}}f }_2^2  \geq  \exp\bigbrac{- (\log(1/\eps))^{C_{k}}}/101
\]
for all $0\leq i\leq j-1$. If we have some $j\leq 200\exp((\log(1/\eps))^{C_k})$ and a corresponding set $\Omega_{\CB^{(j)}}$ which is a union of atoms of $\CB^{(j)}$, satisfying properties~\eqref{excep-ass-3.7}--\eqref{1-bdd-assump} and such that
\begin{align} \label{assump-tr2}
\norm{ (f-\Pi_{\CB^{(j)}} f)\cdot1_{\Omega_{\CB^{(j)}}} }_{U^k[N]} \leq \eps,	
\end{align}
we are done.

Assuming that this does not hold, from the key claim we obtain a refinement $\mathcal{B}^{(j+1)}$ of $\mathcal{B}^{(j)}$ of complexity at most $(j+1+\exp((\log(1/\eps))^{C_k})/2,1)$ and resolution at most $K$ such that $\mathcal{B}^{(j+1)}$ is $C$-regular and 
\[
\norm{ \Pi_{\CB^{(j+1)}}f - \Pi_{\CB^{(j)}}f }_2^2  \geq \exp\bigbrac{- (\log(1/\eps))^{C_{k}}}/101.
\]

We then iterate this for $J=1100\lceil \exp((\log(1/\eps))^{C_k})\rceil$ steps. If~\eqref{assump-tr2} fails for all $0\leq j\leq J$, we have
\begin{align}\label{eq:Pi_lowerbound}
\norm{ \Pi_{\CB^{(i+1)}}f - \Pi_{\CB^{(i)}}f }_2^2  \geq  \exp\bigbrac{- (\log(1/\eps))^{C_{k}}}/101
\end{align}
for all $0\leq i\leq J-1$. Now we may apply Lemma~\ref{pythagoras} and~\eqref{eq:Pi_lowerbound} to obtain for any $1\leq i\leq J$ the estimate
\begin{align*}
\| \Pi_{\CB^{(i)}} f \|_2^2 & = \|  \Pi_{\CB^{(i-1)}} f\|_2^2 + \| \Pi_{\CB^{(i)}} f - \Pi_{\CB^{(i-1)}} f\|_2^2\\
& \geq \|  \Pi_{\CB^{(i-1)}} f\|_2^2+  \exp\bigbrac{- (\log(1/\eps) )^{C_{k}}}/101.
\end{align*}
Iterating this, we obtain
\begin{align*}
\| \Pi_{\CB^{(J)}} f \|_2^2 \geq J \exp\bigbrac{- (\log(1/\eps))^{C_{k}}}/101.
\end{align*}

Note from the construction of $\CB^{(J)}$ that it is a $C$-regular factor of degree $k-1$, with complexity at most $(J+ \exp((\log(1/\eps))^{C_{k}})/2,1)$ and resolution at most $K$. Hence, by assumption~\eqref{l2-bd}, we have 
\[
\norm{\Pi_{\CB^{(J)}} f}^2_2 \leq 10.
\]
This gives us a contradiction since $J \exp\bigbrac{- (\log(1/\eps) )^{C_{k}}}>1010$. Hence, the iteration must terminate before step $J$, so~\eqref{assump-tr2} holds for some $0\leq j\leq J-1$. This completes the proof of the lemma. 
\end{proof}

\begin{remark}
We note that our energy increment argument differs somewhat from the approach of Green and Tao in \cite[Section 2]{GT-regularity} where they run the energy increment argument with measurable sets, establishing first that $f$ must correlate with a measurable set rather than a nilsequence. The advantage of this is that refining a factor $\CB$ of cardinality $M_0$ with the partition $\{E,[N]\setminus E\}$ produces at most $2M_0$ atoms, since each atom $B \in \CB$ can be refined into the atoms $B\cap E$ and $B\setminus E$. In contrast, a factor $\CB_{h,K}$ of resolution $K$ contains at most $K$ atoms. Refining each $B \in \CB$ using $\CB_{h,K}$ may therefore produce up to $K$ sets, resulting in at most $KM_0$ atoms in total. The first approach is thus more efficient. However, after iterating roughly $K$ successive refinements starting from the trivial factor $\{[N]\}$, the two approaches become essentially comparable, exhibiting essentially exponential growth in the number of atoms in terms of the number of iterations. Therefore we have chosen to use a direct nilsequence-based refinement.
\end{remark}


\subsection{Transferred regularity lemma II}

Our next goal is to simplify assumptions~\eqref{l2-bd}--\eqref{1-bdd-assump}. These assert that
\[
\|\Pi_{\CB} f  \|^2_2 \leq 10
\]
if $\CB$ is a $C$-regular factor of degree $k-1$, dimension at most $(\log(1/\eps))^{D_k/10}$, complexity at most $(\exp ((\log(1/\eps))^{D_k/10}), 1)$ and resolution at most $\exp ((\log(1/\eps))^{D_k/10})$, and that there exists a set $\Omega_\CB$ which is a union of atoms of $\CB$ and such that
\[
 |\Pi_\CB f \cdot 1_{\Omega_\CB}| \leq 2,\quad \text{and }\quad \Bigabs{\E_{n\in [N]} (|f|+1)(n) 1_{[N]\backslash \Omega_\CB}(n)} \leq \exp(-(\log(1/\eps))^{D_k}).
\]
By Definition~\ref{cond-ex}, the projection $\Pi_\CB f$ is defined as the conditional expectation of $f$ on the atoms $B \in \CB$. However, this projection is not directly computable, since the structure of the atoms $B$ is generally unknown. We will show that if a factor $\CB$ is regular, then each atom $B \in \CB$ can be well approximated by nilsequences. This will provide a more tractable way of estimating $\norm{\Pi_\CB f}_2$.

The following notation is adapted from \cite[Definition~2.2]{GT-regularity}. In contrast to that definition, we avoid introducing a growth function and instead track the complexity explicitly, as this plays an essential role in our argument. Furthermore, for technical reasons, we require approximation in the $\ell^p$-norm for arbitrarily large $p$, rather than in the $\ell^2$-norm.

\begin{definition}[Measurability]\label{meas}
Let $N,M,\widetilde{C}\ge 1$ and let $s$ be a natural number.  
A finite set $E\subseteq [N]$ is said to be \emph{$s$-measurable with complexity $(M,\widetilde{C})$} if, for every $p\ge 1$, there exists a $1$-bounded, real-valued nilsequence $\psi$ of degree at most $s$, dimension at most $M$, and complexity at most $M^{10p^2+1}$, such that
\[
\|1_E - \psi\|_{p}^p \le \widetilde{C} M^{-10p^2}.
\]
\end{definition}

\begin{lemma}[Atoms of regular factors are measurable]\label{lem-meas}
Let $C\geq 1$ be fixed. Let $k \in \mathbb{N}$, and let $2\leq K\leq M$ be large enough parameters depending on $k$. Let $N\geq 1$, and suppose that $h = F(g(\cdot)\Gamma)$ is a real-valued nilsequence on $[N]$ of degree $k-1$, dimension at most $M$ and complexity at most $(M,1)$. Let $\CB_{h,K}$ be a $C$-regular factor induced by $h$ and of resolution $K$. Then each atom $B \in \CB_{h,K}$ is $(k-1)$-measurable with complexity $(M,4C)$.
\end{lemma}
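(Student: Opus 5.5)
The plan is to approximate the indicator of an atom by composing the nilsequence $h$ with a Lipschitz cutoff. Fix an atom $B=\{n\in[N]\colon j/K<h(n)\le (j+1)/K\}$. For a parameter $0<\delta\le 1/(2K)$, to be chosen in terms of $p$ and $M$, let $\phi_\delta\colon\R\to[0,1]$ be the piecewise linear function that equals $1$ on $[j/K+\delta,(j+1)/K-\delta]$, equals $0$ outside $[j/K,(j+1)/K]$, and is linear in between. Then $\phi_\delta$ is $\delta^{-1}$-Lipschitz and bounded by $1$.

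Set $\psi(n)=\phi_\delta(F(g(n)\Gamma))$. This is a real-valued, $1$-bounded nilsequence on the same nilmanifold and polynomial orbit as $h$, so it has degree $k-1$ and dimension at most $M$. Its Lipschitz norm satisfies
\[
\|\phi_\delta\circ F\|_\lip\le 1+\delta^{-1}\|F\|_\lip\le 1+\delta^{-1},
\]
and the nilmanifold complexity is still at most $M$. Hence $\psi$ has complexity at most $\max(M,1+\delta^{-1})$.

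Next, $1_B$ and $\psi$ agree except at those $n$ with $h(n)$ within $\delta$ of one of the endpoints $j/K$ or $(j+1)/K$. At such $n$ we have $\|K h(n)\|_\T\le K\delta$ and $|1_B-\psi|\le 1$. By $C$-regularity (Definition~\ref{reg-fac}) with $r=K\delta$,
\[
\|1_B-\psi\|_p^p\le \frac{\#\{n\in[N]\colon \|Kh(n)\|_\T\le K\delta\}}{N}\le 2CK\delta\le 4CK\delta .
\]
(The factor $4C$ leaves room if one counts the two endpoints separately or handles a $\pm$ shift.)

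It remains to choose $\delta$ so that $K\delta\le M^{-10p^2}$ and $1+\delta^{-1}\le M^{10p^2+1}$. Take $\delta=M^{-10p^2-1}/2$. Since $K\le M$, this gives $K\delta\le M^{-10p^2}/2$, and $1+\delta^{-1}=1+2M^{10p^2+1}$. This exceeds $M^{10p^2+1}$ by a factor of about $2$, which is the one place requiring care. I would fix it by taking $\delta=M^{-10p^2-1}$ instead: then $K\delta\le M^{-10p^2}$, giving the error bound $2C M^{-10p^2}\le 4CM^{-10p^2}$. For the Lipschitz norm, I would use the sharper bound $\|\phi_\delta\circ F\|_\lip\le \|\phi_\delta\|_\infty+\delta^{-1}\cdot\sup|F(x)-F(y)|/d(x,y)\le 1+\delta^{-1}(1-\|F\|_\infty)$, or else rescale slightly, say $\delta=2M^{-10p^2-1}$, which uses $K\le M/2$. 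Either way the Lipschitz norm is at most $M^{10p^2+1}$, since "$M$ large enough" absorbs the constants. The condition $\delta\le 1/(2K)$ holds because $M$ is large.

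Apart from this bookkeeping of absolute constants in the complexity exponent, the argument is routine: regularity does all the work, and no equidistribution theory is needed.
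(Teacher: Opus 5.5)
Your proposal is correct and follows the paper's argument: you compose $h$ with a piecewise-linear cutoff around the atom's endpoints, control the set where $1_B$ and the cutoff disagree by $C$-regularity at scale $r=K\delta$, and choose the transition width so that the cutoff has slope about $\tfrac12 M^{10p^2+1}$. The paper uses an outward cutoff of width $2M^{-10p^2}/K$, which is your rescaled $\delta$ with the last factor $M$ replaced by $K$. Of your two bookkeeping fixes, keep the rescaling $\delta=2M^{-10p^2-1}$: it gives Lipschitz norm at most $1+\tfrac12 M^{10p^2+1}\le M^{10p^2+1}$ and error $4CKM^{-10p^2-1}\le 4CM^{-10p^2}$ using only $K\le M$, so $K\le M/2$ is not needed. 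The other fix, the bound $1+\delta^{-1}(1-\|F\|_\infty)$, does not close by itself when $\|F\|_\infty<\delta$.
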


\begin{proof}
Fix an integer $j \in \mathbb{Z}$ and consider the set
\[
B=\set{ n\in [N]\colon  \frac{j}{K} < F(g(n)\Gamma) \leq \frac{j+1}{K}},
\]
which is an atom of the factor $\CB_{h,K}$ by Definition~\ref{factor-res}. Let $\eps >0$ be a small parameter to be chosen later. Define a cutoff $\eta\colon \R\to[0,1]$ such that $\eta(x)=0$ for $x \notin [\frac{j-\varepsilon}{K}, \frac{j+1+\varepsilon}{K})$ and $\eta(x)=1$ for $x \in [\frac{j}{K}, \frac{j+1}{K})$, with
\[
\|\eta\|_{\lip} \leq 1 + K/\varepsilon,
\]
which is achievable by taking $\eta$ to be a piecewise linear function.

By the definitions of the set $B$ and function $\eta$, we have for any real number $p\geq 1$ the estimate 
\begin{align}\label{eq:1Bh}\begin{split}
\norm{1_B - \eta\circ h}_p^p & =\E_{n\in [N]} \bigabs{1_B(n) -\eta (F(g(n)\Gamma))}^p \\
&\leq \E_{n\in [N] } 1_{F(g(n)\Gamma) \in [\frac{j-\eps }{K},\frac{j}{K})\cup [\frac{j+1 }{K},\frac{j+1+\eps }{K})}
|\eta(F(g(n)\Gamma))|^p.
\end{split}
\end{align}
Since $\CB_{h,K}$ is $C$-regular, for any $r>0$ we have from Definition~\ref{reg-fac} that
\[
\#\set{n\in [N]\colon  \norm{K\cdot F(g(n)\Gamma)}_\T \leq r}\leq 2CrN.
\]
Choosing $r=\varepsilon$, it follows that
\[
\#\{n\in [N]\colon |F(g(n)\Gamma)- j/K|\le \varepsilon / K \}
   \leq 2C \eps N,
\]
uniformly for all $j\in \Z$.  
Since $|\eta|\le 1$, this and~\eqref{eq:1Bh} imply
\[
\|1_B - \eta\circ h\|_{p}^{p} \leq 2C\varepsilon.
\]

Now we choose $\varepsilon = 2 M^{-10p^{2}}$ (note that $\varepsilon <1$ because $K\le M$ and $M\geq 2$ and $p\ge 1$).  
With this choice we obtain
\[
\|\eta\|_{\lip} \leq KM^{10p^{2}}/2 + 1 \leq M^{10p^2+1}/2+1,
\qquad\text{and}\qquad
\|1_B - \eta\circ h\|_{p}^p \leq 4CM^{-10p^2}.
\]
The composition $\eta\circ h = \eta\circ F (g(\cdot)\Gamma)$ is a nilsequence on the same polynomial orbit $n\mapsto g(n)\Gamma$ (of degree $k-1$), and its Lipschitz norm satisfies
\begin{align*}
\|\eta\circ F\|_{\lip}
&\le \|\eta\circ F\|_\infty
   + \sup_{x\ne y} \frac{|\eta(F(x)) - \eta(F(y))|}{d(x,y)} \\
&\leq 1
   + \sup_{\substack{x\ne y \\ F(x)\ne F(y)}}
      \frac{|\eta(F(x)) - \eta(F(y))|}{|F(x)-F(y)|}
      \cdot \frac{|F(x)-F(y)|}{d(x,y)}  \\
&\leq 2+M^{10p^{2}+1}/2\\
&\leq M^{10p^2+1}.
\end{align*}
Combining the two inequalities above and invoking Definition~\ref{meas}, we obtain the desired conclusion of the lemma.
\end{proof}

\begin{lemma}[Approximating measurable sets by nilsequences]\label{set-nil}
Let $N\geq 1$, and let $C\geq 1$ be fixed. Let $s,m\in \mathbb{N}$, and let $M\geq 1$ be a large number. Suppose that $\CB_1,\dots,\CB_m$ are factors of $[N]$ whose atoms are $s$-measurable sets of complexity at most $(M,C)$. Let $\CB =\bigvee_{1\leq j\leq m}\CB_j$. Then for any atom $B \in \CB$ there exists a real-valued nilsequence $\psi_B$ of degree $s$, dimension at most $mM$ and complexity at most $mM^{11m^3}$, such that  
\[
\E_{n\in [N]}\bigabs{1_B(n) -\psi_B (n)} \leq \frac{Cm}{M^{10m}}.
\]
\end{lemma}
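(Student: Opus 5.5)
Each atom $B\in\CB$ has the form $B=B_1\cap\cdots\cap B_m$ with $B_j\in\CB_j$, so $1_B=\prod_{j=1}^m 1_{B_j}$. The plan is to approximate each $1_{B_j}$ by a nilsequence given by the measurability assumption (Definition~\ref{meas}), take $\psi_B$ to be the product of these nilsequences, and control the product through a telescoping bound plus Hölder's inequality. The exponent $p$ in Definition~\ref{meas} will be chosen as $p=m$, so that the $\ell^m$ approximations fit exactly into Hölder with $m$ factors.

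\textbf{Step 1 (approximants).} For each $j$, apply Definition~\ref{meas} to $B_j$ with $p=m$. This gives a $1$-bounded real nilsequence $\psi_j$ of degree $\le s$, dimension $\le M$, complexity $\le M^{10m^2+1}$, with
\[
\|1_{B_j}-\psi_j\|_m^m\le C M^{-10m^2},\quad\text{so}\quad \|1_{B_j}-\psi_j\|_m\le C^{1/m}M^{-10m}.
\]
Set $\psi_B=\prod_j\psi_j$. By Lemma~\ref{pro-nil}, $\psi_B$ is a real nilsequence of degree $s$ and dimension $\le mM$. Its complexity is at most $(m+1)(M^{10m^2+1})^m=(m+1)M^{10m^3+m}$, which is $\le mM^{11m^3}$ once $M$ is large (for $m\ge1$, $M^{m^3-m}\ge 2$ absorbs the factor $(m+1)/m\le 2$). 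If degree exactly $s$ is required, pad with a trivial factor.

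\textbf{Step 2 (telescoping).} Write
\[
1_B-\psi_B=\sum_{j=1}^m\Big(\prod_{i<j}\psi_i\Big)(1_{B_j}-\psi_j)\Big(\prod_{i>j}1_{B_i}\Big).
\]
All the other factors are $1$-bounded, so each term is bounded pointwise by $|1_{B_j}-\psi_j|$. Then
\[
\E_{n\in[N]}|1_B-\psi_B|\le\sum_j\|1_{B_j}-\psi_j\|_1\le\sum_j\|1_{B_j}-\psi_j\|_m\le mC^{1/m}M^{-10m}\le CmM^{-10m},
\]
using monotonicity of normalised $\ell^p$ norms and $C\ge1$.

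This already gives the bound, and Hölder was not even needed; $1$-boundedness suffices. The main point requiring care is the complexity bookkeeping in Step 1, that is, matching $(m+1)M^{m(10m^2+1)}$ against $mM^{11m^3}$. That is why $p=m$ is the natural choice: it makes the error $M^{-10m}$ while keeping the complexity within $M^{O(m^3)}$.
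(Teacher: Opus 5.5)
Your proof is correct and is essentially the paper's argument: measurability with $p=m$, the telescoping identity, and Lemma~\ref{pro-nil} for the complexity, with your pointwise bound plus $\|\cdot\|_1\le\|\cdot\|_m$ standing in for the paper's H\"older step. One small slip is that for $m=1$ the exponent $m^3-m$ vanishes, so $(m+1)M^{10m^3+m}\le mM^{11m^3}$ fails there; in that case, however, $\psi_B=\psi_1$ already has complexity at most $M^{11}$ and Lemma~\ref{pro-nil} is not needed.
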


\begin{proof} 
For an atom $B\in\CB$, write $B = \bigcap_{j = 1}^m B_j$ with $B_j \in \CB_j$. Then we have $1_B(n) = \prod_{1\leq j\leq m} 1_{B_j} (n)$.

By the $s$-measurability of each $B_j$, we may take $p=m$ in Definition~\ref{meas} to obtain $1$-bounded, real-valued nilsequences $\psi_1,\dots,\psi_m$ of degree $s$, dimension at most $M$, and complexity at most $M^{10m^2+1}$ such that
\begin{align}\label{lm-app}
	\norm{ 1_{B_j} -\psi_j}_m \leq C M^{-10m}.
\end{align}
Moreover, using a telescoping identity, we have
\begin{align*}\begin{split}
	&\E_{n\in [N]} \Bigabs{ \prod_{1\leq j\leq m} 1_{B_j} (n)-
	 \prod_{1\leq j\leq m} \psi_j (n)} \\ 
	\leq& \sum_{l=1}^m \E_{n\in [N]} \biggabs{ \brac{\prod_{j<l} 1_{B_j} (n)} (1_{B_l}-\psi_l)(n) \brac{\prod_{l<j\leq m} \psi_j(n)}}\\
	\leq& m \max_{1\leq l\leq m}  \E_{n\in [N]} \biggabs{ \brac{\prod_{j<l} 1_{B_j} (n)} (1_{B_l}-\psi_l)(n) \brac{\prod_{l<j\leq m} \psi_j(n)} }.
    \end{split}
\end{align*}
Let $g_j\in \{1_{B_j},\psi_j\}$ for each $j\neq l$, thus $|g_j|\leq1$. Applying H\"older's inequality and~\eqref{lm-app}, we can estimate
\[
\E_{n\in [N]} \Bigabs{  (1_{B_l} -\psi_l) (n)\prod_{\substack{1\leq j\leq m \\ j\neq l} } g_j(n)}\leq \norm{1_{B_l} -\psi_l}_m \prod_{\substack{j\leq m \\ j\neq l}} \norm{g_j}_{m}\leq  CM^{-10m}.
\]
Combining the two inequalities above, recalling that $1_B =\prod_{1\leq j\leq m} 1_{B_j}$, we thus have
\[
\E_{n\in [N]} \Bigabs{1_B(n) - \prod_{1\leq j\leq m}\psi_j (n)} \leq \frac{Cm}{M^{10m}}.
\]

It remains to verify that $\psi_B \coloneqq \prod_{j=1}^m \psi_j$ is a nilsequence with the required degree, dimension, and complexity bounds. This follows immediately from Lemma~\ref{pro-nil}.
\end{proof}

With these preparatory steps in place, we are now ready to simplify conditions~\eqref{l2-bd}--\eqref{1-bdd-assump}.

\begin{lemma}\label{lem-cor}
Let $C\geq 1$ be fixed. Let $1\leq K\leq M$ be large enough in terms of $C$, and let $N\geq 1$. 
Let $k\geq 2$ be a natural number and let $\CB$ be a $C$-regular factor of degree $k-1$, complexity at most $(M,1)$ and resolution $K$.
Let $M^{-4M} \leq  \eta \leq M^{-2M}$. Suppose that $f \colon  [N] \to \mathbb{R}_{\geq 0}$ is a function and that there exists a function $\tilde f\colon [N]\to\R_{\ge 0}$ with the following three properties:
\begin{enumerate}[(i)]
    \item We have $f\le \tilde f$ pointwise; 
    \item $\tilde f$ obeys the pointwise bound $\|\tilde f\|_\infty \leq \eta^{-1/3}$;
    \item For every nilsequence $F(g(\cdot)\Gamma)$ of degree $k-1$, dimension at most $CM^2$ and complexity at most $M^{CM^3}$ there is a partition $\mathcal P$ of $[N]$ with
\begin{align}\label{corr-de}
\left|\sum_{n\in [N]} \tilde f(n) F(g(n)\Gamma)\right| \leq \frac{\eta N}{4} +\sum_{P\in \mathcal P}\left|\sum_{n\in P} F(g(n)\Gamma)\right|.
\end{align}
\end{enumerate}
Then we have
\begin{align}\label{l_2_bd}
\norm{\Pi_\CB f}_2^2 \leq 5,
\end{align}
and there exists a set $\Omega_\CB$ which is a union of atoms of $\CB$ and such that
\begin{align}\label{almost_all_bd}
 |\Pi_\CB f \cdot 1_{\Omega_\CB}| \leq 2,\quad \text{and }\quad \Bigabs{\E_{n\in [N]} ( f+1)(n) 1_{[N]\backslash \Omega_\CB}(n)} \leq \eta^{1/2}.
\end{align} 
\end{lemma}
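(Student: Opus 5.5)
The plan is to control, for every atom $B\in\CB$, the average $\E_{n\in B} f(n)$ through the correlation of $\tilde f$ with a nilsequence that approximates $1_B$. By Lemma~\ref{lem-meas}, each generator $\CB_{h_i,K}$ of $\CB$ has $(k-1)$-measurable atoms of complexity $(M,4C)$. Since $\CB$ is a join of $m\le M$ generators, Lemma~\ref{set-nil} then gives, for each atom $B$, a real nilsequence $\psi_B$ of degree $k-1$, dimension $\le mM\le CM^2$ and complexity $\le mM^{11m^3}\le M^{CM^3}$ with
\[
\E_{n\in[N]}|1_B-\psi_B|\le 4CmM^{-10m}.
\]
This is admissible in hypothesis (iii).

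Next we bound the sum over $B$. Using $f\le\tilde f$ and $\|\tilde f\|_\infty\le\eta^{-1/3}$, we have
\[
\sum_{n\in B} f(n)\le \sum_{n}\tilde f(n)\psi_B(n)+\eta^{-1/3}\cdot 4CmM^{-10m}N .
\]
Apply (iii) to $\psi_B$ to get a partition $\mathcal P$. Since $\psi_B\approx 1_B$ in $\ell^1$, the sum $\sum_{P}|\sum_{n\in P}\psi_B(n)|$ is at most $\sum_P|P\cap B|+O(mCM^{-10m}N)=|B|+O(CmM^{-10m}N)$. Hence
\[
\sum_{n\in B} f(n)\le |B|+\tfrac{\eta N}{4}+O\bigl(\eta^{-1/3}CmM^{-10m}N\bigr).
\]
The error term is at most $\eta^{-1/3}M^{-9M}N\le \eta^{2/3}N$, because $M^{-10m}$ with $m$ possibly small is a concern. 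I would resolve this by padding with trivial generators so that $m=M$, or else by using that $\eta\le M^{-2M}$. The key inequality to aim for is therefore
\[
\sum_{n\in B}f(n)\le |B|+\eta N/2
\]
uniformly in the atom $B$.

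Then we split the atoms. Call $B$ large if $|B|\ge \eta^{1/2}N/(2\#\CB)$, with $\#\CB\le (K+2)^M\le M^{2M}\le \eta^{-1/2}$. Since $\eta^{1/2}/\#\CB$ may be comparable with $\eta$, I expect the main obstacle to be balancing the thresholds. To address this I would define $\Omega_\CB$ to be the union of atoms with $|B|\ge \eta^{1/2}N$ instead. On these atoms $\Pi_\CB f\le 1+\eta N/(2|B|)\le 2$. On the remaining atoms, the total mass $\sum(f+1)$ is at most $\#\CB\cdot(2\eta^{1/2}N+\eta N/2)$. This needs $\#\CB\,\eta^{1/2}\ll\eta^{1/2}$-level smallness, which fails. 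So the real bookkeeping is to use the full range $\eta\le M^{-2M}$, i.e.\ $\#\CB\le\eta^{-1/2}$ roughly, and set the threshold at $|B|\ge \eta^{3/4}N$. This gives $\Pi_\CB f\le 1+\eta^{1/4}/2\le 2$ on $\Omega_\CB$, and exceptional mass at most $\#\CB(\eta^{3/4}+\eta)N\cdot 2\le\eta^{1/4}N$, with exponents to be tuned against $M^{-4M}\le\eta$ and $\#\CB\le M^{M}$-type counts. That tuning is delicate and is where I would spend the effort.

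Finally, for the $\ell^2$ bound we have
\[
\|\Pi_\CB f\|_2^2=\sum_B \frac{(\sum_B f)^2}{N|B|}.
\]
On $\Omega_\CB$ each term is at most $2\sum_B f/N$, and these sum to at most $2(1+\eta N\#\CB/N)\le 3$. On the small atoms we use $\sum_B f\le |B|+\eta N/2$, so each term is at most $2|B|/N+\eta^2N/(2|B|)$. With $|B|\ge1$, this is where $\|\tilde f\|_\infty\le\eta^{-1/3}$ enters: we get $\sum_B f\le \eta^{-1/3}|B|$, and hence the term is at most $\eta^{-1/3}\sum_Bf/N$. Summing this over the exceptional atoms, whose $f$-mass is small, gives at most $\eta^{-1/3}\cdot\eta^{1/2}\le1$. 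Altogether $\|\Pi_\CB f\|_2^2\le5$.
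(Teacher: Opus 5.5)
Your approximation step, and the resulting bound $\sum_{n\in B}f(n)\le |B|+\eta N/4+O(\eta^{2}N)$ uniformly over atoms $B$, follow the paper's route (Lemma~\ref{set-nil} plus hypothesis (iii)). Two small corrections there.
\begin{itemize}
\item The error is $O(\eta^{2}N)$, not merely $\eta^{2/3}N$. The weaker bound $\eta^{2/3}N$ would not give your key inequality. After padding to $m=M$ one has $\eta^{-1/3}\cdot 4CM^{1-10M}\le M^{-8M}\le \eta^{2}$.
\item Padding, or taking $p=M$ in Definition~\ref{meas}, is genuinely needed. Appealing to $\eta\le M^{-2M}$ cannot help when $m$ is small.
\end{itemize}

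The genuine gap is the threshold defining $\Omega_\CB$. With your final choice $|B|\ge \eta^{3/4}N$, you only control the exceptional $(f+1)$-mass by $O(\#\CB\,\eta^{3/4})$. That is $\eta^{1/4}$ when $\#\CB\approx \eta^{-1/2}$, which misses the required $\eta^{1/2}$ in \eqref{almost_all_bd}. Worse, your $\ell^2$ estimate on the exceptional atoms tacitly uses $f$-mass at most $\eta^{1/2}$. With mass $\eta^{1/4}$ it gives $\eta^{-1/3}\eta^{1/4}=\eta^{-1/12}$, which is unbounded, so \eqref{l_2_bd} is not established either. Also, your chain $(K+2)^M\le M^{2M}\le \eta^{-1/2}$ runs the wrong way: the last step would need $\eta\le M^{-4M}$. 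The count actually required is $\#\CB\le \eta^{-1/2}$, which the paper takes from $|\CB|\le K^M\le M^M$ and $\eta\le M^{-2M}$.

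The missing observation is that a threshold at scale $\eta$ already works on the large-atom side, so there is nothing to balance. The paper calls $B$ small if $\E_{n\in[N]}(f+1)(n)1_B(n)\le \eta$. The exceptional mass is then at most $\eta|\CB|\le \eta^{1/2}$. For a non-small atom, your key inequality gives $\E f1_B\le \eta/3+\E 1_B$. Together with $\E f1_B+\E 1_B>\eta$, this forces $\E 1_B\ge \eta/3$, and hence $\E_{n\in B}f\le (\eta/3+\E 1_B)/\E 1_B\le 2$.

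In your language: take $\Omega_\CB$ to be the union of atoms with $|B|\ge \eta N/3$. The $\ell^2$ bound then closes as you and the paper intend: at most $4$ from $\Omega_\CB$, plus $\eta^{-1/3}\cdot\eta^{1/2}\le 1$ from the exceptional atoms. Your first attempt, with threshold $\eta^{1/2}N/(2\#\CB)\ge \eta N/2$, was essentially this up to constants. That it can be comparable with $\eta$ is exactly what is needed, not an obstacle.
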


\begin{proof}
We begin by showing that there exists an exceptional set $\mathcal{E}\subseteq [N]$ satisfying the following properties:
\begin{enumerate}
    \item $\mathcal{E}$ is a union of atoms of $\CB$.
	\item For every atom $B\in \CB$ with $B\cap \mathcal E=\emptyset$ we have $\E_{n \in [N]} (f+1)(n) 1_B(n) > \eta$. 
	\item The contribution from $\mathcal E$ is negligible, that is,
\begin{align} \label{excep-con}
\bigabs{\E_{n\in [N]} (f+1) (n) 1_\mathcal E(n)} \leq \eta^{1/2}.
\end{align}
\item We have
\begin{align}\label{inf-f-fact}
\norm{\Pi_\CB f\cdot (1-1_\mathcal E)}_\infty \leq  2.
\end{align}
\end{enumerate}

To construct such a set, we call an atom $B\in\CB$ \emph{small} if
\[
\E_{n\in [N]} (f+1) (n) 1_B(n) \leq \eta.
\]	
Let $\mathcal D\subseteq \CB$ be the collection of small atoms and set $\mathcal E = \bigcup_{B\in \mathcal D}B$. Then we have
\[
\E_{n\in [N]} (f+1) (n) 1_\mathcal E (n) = \sum_{B\in \mathcal D} \E_{n\in [N]} (f+1) (n) 1_B(n)\leq  \eta |\mathcal D| \leq \eta |\CB|.
\]
On the other hand, in light of Definition~\ref{factor-res} and the assumptions on $\CB$, one has $|\CB|\leq K^M$. We thus obtain from the condition $M^{-4M}\leq \eta\leq M^{-2M}$ that
\[
\E_{n\in [N]} (f+1) (n) 1_\mathcal E(n) \leq \eta K^M \leq \eta^{1/2}.
\]
Hence, the first three properties required of the set $\mathcal E$ are satisfied. It remains to verify property~\eqref{inf-f-fact}.

We now restrict our attention to atoms $B\not\in \mathcal D$, which means that 
\begin{align}\label{lar-b}
\E_{n\in [N]} f(n) 1_B(n) +\E_{n\in [N]}1_B(n) = \E_{n\in [N]} (f+1) (n) 1_B(n)>\eta.
\end{align}
By~\eqref{eq:conditionalexp}, for any $B \in \CB$, one has
\begin{align}\label{eq:PiB}
\E_{n\in B} \Pi_\CB f(n) = \E_{n\in B} f(n) =\frac{\E_{n\in [N]} f(n) 1_B(n)}{\E_{n\in [N]}1_B(n)} \leq \frac{\E_{n\in [N]} \tilde f(n) 1_B(n)}{\E_{n\in [N]}1_B(n)}.
\end{align}

Since $\CB$ is $C$-regular for some absolute constant $C\geq 1$ and has degree $k-1$, complexity $(M,1)$ and resolution $K$, it follows from Definition~\ref{reg-fac} that there are $C$-regular factors $\CB_{h_1,K},\dots,\CB_{h_m,K}$ with $m\leq M$ and $h_i$ being real-valued nilsequences of degree $k-1$, dimension at most $M$ and complexity at most $(M,1)$ such that $\CB=\bigvee_{i\leq m}\CB_{h_i,K}$. As Lemma~\ref{lem-meas} ensures that the atoms of $\CB_{h_i,K}$ are $(k-1)$-measurable with complexity at most $(M,4C)$, one can deduce from Lemma~\ref{set-nil} with the assumptions $\|\tilde f\|_\infty \leq\eta^{-1/3}$ and $M^{-4M}\leq\eta \leq M^{-2M}$ that $\frac{CM\|\tilde f\|_\infty}{M^{10M}}\leq \eta^{3/2}$, and that
\[
\Bigabs{\E_{n\in [N]} \tilde f(n)1_B(n) -\E_{n\in [N]} \tilde f(n) F(g(n)\Gamma)}\leq \|\tilde f\|_\infty \E_{n\in [N]}\bigabs{1_B(n) -F(g(n)\Gamma)}\ll\eta^{3/2}
\]
for some nilsequence $F(g(\cdot)\Gamma)$ of degree $k-1$, dimension at most $CM^2$ and complexity at most $M^{CM^3}$. Since $M$ is large enough and $\eta\leq M^{-2M}$, we may assume that the error term in the previous equation is at most $\eta/100$ in modulus. Now, in light of assumption~\eqref{corr-de}, one has
\[
\Bigabs{\sum_{n\in [N]}\tilde  f(n)1_B(n)} \leq \sum_{P\in \mathcal P} \Bigabs{\sum_{n\in P} F(g(n)\Gamma)} +(\eta/4+\eta/100)N,
\]
for some partition $\mathcal P$ of $[N]$. Noting that $M^{-10M+1}\leq \eta^{3/2}$ we then deduce from Lemma~\ref{set-nil} that
\[
\sum_{P\in\mathcal P} \sum_{n\in P}\bigabs{ F(g(n)\Gamma)-1_B(n)}\ll \eta ^{3/2}N.
\]

Combining the above two inequalities gives us
\[
\E_{n\in[N]} \tilde f(n) 1_B(n) \leq \eta/3	+ \E_{n\in [N]}1_B(n).
\]
On the other hand, combining this inequality with~\eqref{lar-b} we obtain
\[
\E_{n\in [N]}1_B(n)\geq \eta/3.
\]
Thus, recalling~\eqref{eq:PiB}, we can bound the average of $\Pi_\CB f$ on $B$ by
\[
\E_{n\in B} \Pi_\CB f(n)
\leq \frac{\eta/3 + \E_{n\in [N]}1_B(n)}{\E_{n\in [N]}1_B(n)}
\leq 2
\]
for all atoms $B \subseteq [N]\setminus \mathcal E$. Since $\Pi_\CB f$ is constant on each atom, we obtain $\sup_{B\cap\mathcal E=\emptyset}\sup_{n\in B}|\Pi_{\CB}f(n)|\leq 2$, giving~\eqref{inf-f-fact}. 

We are now ready to deduce the conclusion of the lemma. Setting $\Omega_\CB=[N]\backslash\mathcal E$, the estimates~\eqref{almost_all_bd} follow immediately from~\eqref{excep-con} and~\eqref{inf-f-fact}. It remains to prove~\eqref{l_2_bd}. Observe first that
\begin{align}\label{eq:Pi_split}
\norm{\Pi_\CB f}_2^2 = \frac{1}{N}\sum_{B\in \CB} \sum_{n\in B} \abs{\Pi_\CB f(n)}^2 = \frac{1}{N} \sum_{B\in \mathcal{B}\setminus \mathcal D} \sum_{n\in B} \abs{\Pi_\CB f(n)}^2 +\frac{1}{N}\sum_{B\in \mathcal D}\sum_{n\in B} \abs{\Pi_\CB f(n)}^2,
\end{align}
where the second equality follows from decomposing the average over $\CB$ into contributions from atoms inside and outside $\mathcal D$. Now, since $\mathcal E=\cup_{B\in \mathcal D}B$, it follows from~\eqref{inf-f-fact} that the first term on the right of~\eqref{eq:Pi_split} is bounded by $4$. Then, noting that $\Pi_{\mathcal{B}}f(n)=\mathbb{E}_{m\in B}f(m)$ for $n\in B$ and that $f\geq 0$, we have
\begin{align*}
\norm{\Pi_\CB f}_2^2 \leq 4+ \frac{1}{N} \sum_{B\in \mathcal D} \sum_{n\in B} |\E_{m\in B}f(m)|^2 \leq 4+ \frac{\norm{f}_\infty}{N}\sum_{B\in \mathcal D} \sum_{m\in B}  f(m).
\end{align*}
Invoking the bound $\|f\|_\infty \leq \eta^{-1/3}$, we deduce from~\eqref{excep-con} that
\[
\norm{\Pi_\CB f}_2^2\leq 4+ \norm{f}_\infty \cdot \E_{n \in [N]}   \Pi_\CB f(n)1_\mathcal E(n) \leq 4+ \eta^{-1/3}\eta^{1/2} \leq 5,
\]
since $\eta\leq M^{-2M}<1$.
\end{proof}

We summarise the above result as a \emph{dense model} statement. Compared with~\cite{Go-dense} and~\cite{reing}, we obtain quasipolynomial dependencies (improving on previous exponential bounds). We also note that the result below provides additional structural information on the dense function $g$, which can be viewed as the conditional expectation of the function with respect to a suitable regular factor.

\begin{proposition}[Dense model]
\label{tra-wea-2}
Let $N \geq 1$ be an integer, let $k \geq 2$ be a natural number, and let $0 < \varepsilon < 1/3$ be a parameter. Let $f \colon [N] \to \mathbb{R}_{\geq 0}$ be a function. Then there exists a small number $0<\gamma_k<1/10$ depending only on $k$ such that the following statement holds.

Suppose there exist two majorant functions $\nu_1,\nu_2 \colon [N] \to \mathbb{R}_{\ge 0}$\footnote{If there exists a single majorant function $\nu$ satisfying all three hypotheses, then one may simply take $\nu=\nu_1=\nu_2$. We formulate the assumptions using two possibly different majorants in order to weaken the hypotheses.} such that $f \le \nu_i$ pointwise for each $i \in \{1,2\}$, and suppose that the following conditions hold:
\begin{enumerate}
    \item (Pseudorandomness) 
    \[
    \|\nu_1 - 1 \|_{U^{2k}[N]} \leq \varepsilon.
    \]

    \item (Boundedness of majorant) 
    \[
    \| \nu_2 \|_\infty \leq \exp\exp((\log(1/\varepsilon))^{\gamma_k^2}).
    \]
    
    \item (Nilsequence correlation) Let $\exp\bigbrac{- \exp\bigbrac{(\log(1/\eps) )^{1/3}}}\leq \eta<1$. Then, for every nilsequence $F(g(\cdot)\Gamma)$ of degree $k-1$, dimension at most $(\log(1/\eta))^3$, and complexity at most $(\exp((\log(1/\eta))^3),\exp((\log(1/\eta))^3))$, there exists a partition $\mathcal{P}$ of $[N]$ into arithmetic progressions with common difference at most $\exp((\log(1/\eta))^{C_k})$ for some constant $C_k$ depending only on $k$, such that
    \begin{align}\label{corr-nil-f}
    \left|\sum_{n\in [N]} \nu_2(n) F(g(n)\Gamma)\right|\leq \eta^2N +\sum_{P\in \mathcal{P}}\left|\sum_{n\in P} F(g(n)\Gamma)\right|.
    \end{align}
\end{enumerate}

\bigskip

Then there exists a bounded function $g\colon[N]\to[0,2]$ such that
\[
\norm{f-g}_{U^k[N]} \ll_k \exp(-(\log(1/\varepsilon))^{\gamma_k}).
\]

Additionally, there exist a factor $\CB$ of complexity at most 
$\exp\exp\bigl((\log(1/\eps))^{\gamma_k}\bigr)$ and a set $\Omega\subseteq [N]$, such that
\[
g(n) =\Pi_{\CB} f(n) \qquad \text{ for all } n\in \Omega,
\]
and 
\[
\bigabs{\E_{n\in [N]} (f+1)(n) 1_{[N]\backslash \Omega}(n)} \leq \eps.
\]
\end{proposition}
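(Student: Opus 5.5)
The plan is to combine Lemma~\ref{tra-wea} with Lemma~\ref{lem-cor}. Choose an auxiliary parameter $\eps' = \exp(-(\log(1/\eps))^{\gamma'})$ for a small $\gamma'$ depending on $k$, and apply Lemma~\ref{tra-wea} with $\eps'$ in place of $\eps$ and majorant $\nu=\nu_1$. The hypothesis~\eqref{pseudo-1} then reads $\|\nu_1-1\|_{U^{2k}[N]}\le \exp(-(\log(1/\eps'))^{D_k}) = \exp(-(\log(1/\eps))^{\gamma' D_k})$. This follows from the pseudorandomness assumption as soon as $\gamma' D_k\le 1$, so we take $\gamma'=1/D_k$. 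The output is a factor $\CB$ and a set $\Omega$ with $\|(f-\Pi_\CB f)1_\Omega\|_{U^k[N]}\le \eps'$ and exceptional mass at most $\exp(-(\log(1/\eps'))^{D_k})=\eps$. We then set $g=\Pi_\CB f\cdot 1_\Omega$, which takes values in $[0,2]$ by~\eqref{1-bdd-assump}. The triangle inequality gives
\[
\|f-g\|_{U^k[N]}\le \eps' + \|f1_{[N]\setminus\Omega}\|_{U^k[N]}.
\]
The last term will be bounded by transferring: $f1_{[N]\setminus\Omega}\le \nu_1$, and its $L^1$ mass is at most $\eps$. Since $\|h\|_{U^k[N]}^{2^k}$ for $0\le h\le\nu_1$ can be bounded via Gowers--Cauchy--Schwarz against $\nu_1$ and the pseudorandomness, I will write $\nu_1 = 1+(\nu_1-1)$ on all but one vertex of the cube. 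This gives $\|h\|_{U^k}^{2^k}\ll \|h\|_1 + O(\eps)$, hence a bound $\ll \eps^{2^{-k}}$, which is acceptable. The final exponent is then $\gamma_k\approx\gamma'/2$, and the complexity of $\CB$ is at most $\exp\exp((\log 1/\eps)^{\gamma_k})$, because the number of generators and the resolution are $\exp((\log 1/\eps')^{C_k})$ with $C_k<D_k/10$.

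The remaining task is to verify hypotheses~\eqref{l2-bd}--\eqref{1-bdd-assump} of Lemma~\ref{tra-wea} for every $C$-regular factor of the stated size. For this I will invoke Lemma~\ref{lem-cor} with $\tilde f=\nu_2$, taking $M=\exp((\log(1/\eps'))^{D_k/10})$, which is $\exp((\log 1/\eps)^{1/10})$, and $\eta=M^{-3M}$. Then $\log(1/\eta)\approx 3M\log M$. Condition (ii) requires $\|\nu_2\|_\infty\le\eta^{-1/3}=M^{M}$, that is $\exp(M\log M)$. This is guaranteed by the boundedness hypothesis $\exp\exp((\log 1/\eps)^{\gamma_k^2})$ because $\gamma_k^2<1/10$. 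Lemma~\ref{lem-cor} gives~\eqref{l_2_bd}, and it gives~\eqref{almost_all_bd} with error $\eta^{1/2}$, which must be at most $\exp(-(\log 1/\eps')^{D_k})=\eps$. This holds since $\eta^{1/2}=\exp(-\tfrac32 M\log M)$ is doubly exponentially small.

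Condition (iii) of Lemma~\ref{lem-cor} is where hypothesis~\eqref{corr-nil-f} enters. We need it for nilsequences of dimension $CM^2$ and complexity $M^{CM^3}$. These are dimension $\ll(\log 1/\eta)^2$ and complexity $\exp(O((\log1/\eta)^3))$, which fall within the range $(\log(1/\eta_0))^3$ of hypothesis (3) for a suitable $\eta_0$ comparable to $\eta^{c}$. We must also check that $\eta_0\ge \exp(-\exp((\log 1/\eps)^{1/3}))$. This holds because $\log(1/\eta)\approx M\log M=\exp(O((\log1/\eps)^{1/10}))$. Hypothesis (3) then gives~\eqref{corr-de}, since $\eta_0^2\le\eta/4$ after adjusting constants. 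One caveat is that the Lipschitz normalisations differ: Lemma~\ref{lem-cor} needs complexity in the sense of Definition~\ref{def-nil}, whereas (3) allows Lipschitz constant up to $\exp((\log1/\eta_0)^3)$; this is compatible.

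The main obstacle I expect is this parameter bookkeeping. Lemma~\ref{lem-cor} requires $\eta\in[M^{-4M},M^{-2M}]$ to be doubly exponential in $M$, while hypotheses (2) and (3) constrain the same quantity from the opposite side. So the exponents $\gamma_k$, $1/3$, $D_k/10$ and $C_k$ must be ordered consistently. I would fix $D_k$ first from Lemma~\ref{tra-wea}, then set $\gamma'=1/D_k$ and $\gamma_k=\gamma'/2^{k+1}$, and then check that the inequalities close.
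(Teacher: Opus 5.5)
Your proposal is correct and follows the paper's proof: you verify the hypotheses of Lemma~\ref{tra-wea} via Lemma~\ref{lem-cor} with $\tilde f=\nu_2$ and $\eta=M^{-3M}$, apply Lemma~\ref{tra-wea} at the scale $\eps'=\exp(-(\log(1/\eps))^{1/D_k})$ so that its pseudorandomness hypothesis is exactly $\|\nu_1-1\|_{U^{2k}[N]}\le\eps$, and take $g=\Pi_{\CB}f\cdot 1_\Omega$. The only local difference is the term $\|f1_{[N]\setminus\Omega}\|_{U^k[N]}$: the paper handles it by contradiction using the transferred inverse theorem (Theorem~\ref{tra-inv}), whereas your positivity-plus-Gowers--Cauchy--Schwarz bound $\|h\|_{U^k[N]}^{2^k}\ll_k\|h\|_1+O(\eps)$ is more elementary and gives a polynomial saving $\eps^{2^{-k}}$.
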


\begin{proof}
Let $C\geq 1$ be an absolute constant. We first claim that the inequality
\[
\norm{\Pi_\CB f }^2_2 \leq 5
\]
holds for every $C$-regular factor $\CB$ of degree $k-1$, dimension $d\leq (\log(1/\eps))^{1/10}$, complexity at most $(M,M)$ with
$M \leq \exp\bigbrac{(\log(1/\eps))^{1/10}}$, and resolution at most $\exp\bigbrac{(\log(1/\eps))^{1/10}}$. Moreover, for any sufficiently small constant $0<\gamma_k<1/10$ (to be chosen later), it is harmless to assume the lower bound
\[
M\geq \exp((\log(1/\eps))^{\gamma_k^2}).
\]

To prove the claim, set $1/\eta =M^{3M}$. Then $1/\eta< \exp\exp ((\log(1/\eps))^{1/3})$, so this choice of $\eta$ satisfies condition (3). Moreover, note that $M^{CM^3} \leq \exp((\log (1/\eta))^3) $ and $M^2 \leq (\log(1/\eta))^3$.
Consequently, for $M$ sufficiently large, the hypothesis~\eqref{corr-nil-f} implies that~\eqref{corr-de} holds for every nilsequence of degree $k-1$, dimension at most $CM^2$ and complexity at most $M^{CM^3}$. 
On the other hand, since $\gamma_k<1/10$ is small, the boundedness assumption (2) yields
\[
\norm{\nu_2}_\infty \leq \exp\exp ((\log(1/\eps))^{\gamma_k^2})<\eta^{-1/3}.
\]
Hence the second condition of Lemma~\ref{lem-cor} also holds (with $\tilde f=\nu_2$). Therefore, Lemma~\ref{lem-cor} implies that for every such $C$-regular factor $\CB$,
\[
\norm{\Pi_\CB f }^2_2 \leq 5.
\]
Moreover, Lemma~\ref{lem-cor} provides a corresponding set $\Omega_\CB$, which is a union of atoms of $\CB$ and satisfies the cardinality bound~\eqref{almost_all_bd}. Finally, recall that $1/\eta =M^{3M}$ and $M \geq \exp\bigbrac{(\log(1/\eps))^{\gamma_k^2}}$. It follows that for any sufficiently small constant $\gamma_k>0$ we always have $\eta^{1/2} \leq \eps $. Therefore, we may rewrite~\eqref{almost_all_bd} in the form
\begin{align*}
|\Pi_\CB f1_{\Omega_\CB}| \leq 2 
\qquad \text{and}\qquad 
\bigabs{\E_{n\in [N]} (f+1)(n)1_{[N]\backslash\Omega_\CB}(n)} \leq \eps.	
\end{align*}

Since $0\leq f\leq \nu_1$ and $\norm{\nu_1-1}_{U^{2k}[N]} \leq \eps$, and since the preceding bounds verify~\eqref{l2-bd}--\eqref{1-bdd-assump},
we may apply Lemma~\ref{tra-wea} with $\exp (-(\log (1/\eps ) )^{D_k})$ in place of $\eps$, and then apply Lemma~\ref{lem-meas}. This yields a constant $0<\gamma_k<1/10$ (depending only on $k$),
and a sequence of $C$-regular factors $\CB_1,\dots,\CB_T$ with $T \ll_k \exp\!\bigl((\log(1/\eps))^{\gamma_k}\bigr)$, whose atoms are $k-1$-measurable with complexity at most $M\ll_k \exp\!\bigl((\log(1/\eps))^{\gamma_k}\bigr)$, 
and such that $|\CB_j|\ll K\ll \exp\!\bigl((\log(1/\eps))^{\gamma_k}\bigr)$, and
\begin{align}\label{uni-bound}
\norm{(f-\Pi_{\vee_{1\leq i\leq T}\CB_i} f)\cdot 1_{\Omega_T} }_{U^k[N]} 
\ll_k \exp (-(\log(1/\eps))^{\gamma_k}),
\end{align}
where $\Omega_T$ is the exceptional set produced by Lemma~\ref{tra-wea} and satisfies
\begin{align}\label{**}
\bigabs{\E_{n\in [N]} (f+1)(n) 1_{[N]\backslash \Omega_T}(n)} \leq \eps.
\end{align}
Let $\CB\coloneqq \bigvee_{1\leq i\leq T}\CB_i$ and $\Omega\coloneqq \Omega_T$. Define a bounded function $g\colon [N]\to[0,2]$ by setting
\[
g(n)\coloneqq \Pi_\CB f(n)\cdot 1_{\Omega}(n).
\]
It then follows from~\eqref{uni-bound} and the triangle inequality for the Gowers norm that
\begin{align*}
	\norm{f-g}_{U^k[N]} &\ll_k \norm{(f-g)\cdot 1_\Omega}_{U^k[N]} + \norm{f\cdot 1_{[N]\backslash\Omega}}_{U^k[N]}\\
	&\ll_k \exp (-(\log(1/\eps))^{\gamma_k})+ \norm{f\cdot 1_{[N]\backslash\Omega}}_{U^k[N]}.
\end{align*}
We claim that $\norm{f\cdot 1_{[N]\backslash\Omega}}_{U^k[N]}\ll_k \exp (-(\log(1/\eps))^{\gamma_k})$. Indeed, if this were false, then by Theorem~\ref{tra-inv} (and the choice of
 $\gamma_k$) there would exist a $1$-bounded function $\psi$ such that
\[
\bigabs{\E_{n\in [N]} f(n) 1_{[N]\backslash\Omega}(n) \psi(n)} \geq \eps^{1/10},
\]
However this contradicts~\eqref{**}. Therefore, we have
 \[
\norm{f-g}_{U^k[N]} \ll_k \exp (-(\log(1/\eps))^{\gamma_k}).
\]
This completes the proof.
\end{proof}

\begin{remark}
We will apply this Proposition~\ref{tra-wea-2} with $\nu_2=\Lambda(W\cdot +b)$; 
this is possible since we have a strong bound on the $U^k[N]$-norm of the von Mangoldt function. Note however that we could also take $\nu_2$ to be a sieve majorant for $\Lambda(W\cdot+b)$, and in situations where we do not control the $U^k[N]$-norm of the function directly, this would be necessary.	We take $\nu_1$ in turn to be a GPY sieve as in earlier works; it would in fact be possible to take $\nu_1=\nu_2$, but since we later need a quantitative linear forms condition for $\nu_1$, this would require a slight additional argument, whereas for the GPY sieve we get the linear forms condition directly from existing work. 
\end{remark}
\section{Correlation of the von Mangoldt function with nilsequences}\label{sec4}

In this section, we verify condition~\eqref{corr-nil-f} with the choice $\nu_2=\Lambda(W\cdot+b)$ as the $W$-tricked version of the von Mangoldt function. The key property of the von Mangoldt function used in the proof is that it decomposes into Type~I and Type~II sums, which we define as follows.

\begin{definition}[Type I and Type II sums]
A function $a\colon \Z \to \C$ is called \emph{divisor-bounded} if $|a(n)| \leq (d(n)\log(3n))^{100}$ for all $n$.

Let $N\geq 1$ and $U\geq 1$ be parameters. We say that a function $f\colon \mathbb{Z}\to \mathbb{C}$ is a \emph{Type~I sum of level $U$ at scale $N$} if for $n\in [N]$ we can write
\[
f(n) = \sum_{d \mid n} a(d)
\]
for some divisor-bounded function $a$ with $\supp(a)\subseteq [1,U]$.

For an interval $J$, we say that a function $f\colon \mathbb{Z}\to \mathbb{C}$ is a \emph{Type~II sum of range $J$ at scale $N$} if there is $U\in J$ such that for $n\in [N]$ we can write
\[
f(n) = \sum_{n=md} a(d)b(m)
\]
for some divisor-bounded functions $a,b$, with $\supp(a) \subseteq [U, 2U]$.
\end{definition}

We wish to apply the dense model statement (Proposition~\ref{tra-wea-2}) to a $W$-tricked version of the von Mangoldt function. This involves verifying
the correlation condition~\eqref{corr-nil-f} between nilsequences and the von Mangoldt function. To this end, we decompose the relevant polynomial sequence into rational and smooth polynomial sequences, which are easier to analyse.

\begin{definition}[Rational and smooth sequences]\label{rational-smooth}
Let $G/\Gamma$ be a nilmanifold with a Mal'cev basis $\mathcal X$. Let $1\leq Q,M\leq N$. We say that $\gamma\in G$ is $Q$-\emph{rational} if $\gamma^q\in \Gamma$ for some integer $1\leq q\leq Q$. We say that a sequence $(\gamma(n))_{n\in \Z}$ is $Q$-\emph{rational} if $\gamma(n)$ is $Q$-rational for every $n\in \Z$.

We say that a polynomial sequence $(\eps(n))_{n\in \Z}$ is $(M,N)$-\emph{smooth} if
\[
d\bigbrac{\eps(n), \textnormal{id}_G} \leq M \quad \textnormal{ and }\quad d\bigbrac{\eps(n), \eps(n+1)} \leq M/N
\]
for all $n\in [N]$.
\end{definition}

The goal of this section is to establish the following proposition.

\begin{proposition}[Correlation with nilsequences]\label{cor-1}
Let $k$ be a natural number, let $C\geq 1$ and $N\geq 3$. Let $1\leq b\leq W\leq \exp((\log N)^{1/200})$ with $(b,W)=1$. Let $f\colon [N] \to \R_{\geq 0}$ be a function with the following two properties:
\begin{enumerate}
	\item For $n\in [N]$, we can write
	\[
	f(n)=\sum_{0\leq j\leq (\log N)^{C}}\int_{1}^{N}\bigl(h^{\textnormal{I}}_{j,t}(n)+h^{\textnormal{II}}_{j,t}(n)\bigr)\frac{\rd t}{t},
	\]
	where for each $j$ and $t$, the function $h^{\textnormal{I}}_{j,t}$ is a Type~I sum of level $N^{2/3}$ at scale $N$, and the function $h^{\textnormal{II}}_{j,t}$ is a Type~II sum of range $[N^{1/3}/2, N^{2/3}]$ at scale $N$.
	\item For any arithmetic progression $P\subseteq[N]$ of size $\geq N \exp(-(\log N)^{1/100})$ we have $\E_{n\in P} f(n) \leq C$.
\end{enumerate}

Then there exists a large constant $C_k>1$ depending only on $k$, such that the following holds. If $\eta \in(0,1)$ satisfies $\exp\exp((\log \log N)^{1/C_k}) \leq 1/\eta$ and $(\log(1/\eta))^{200C_k^{10^k}} \leq \log N$, and if $F(g(\cdot)\Gamma)$ is a nilsequence of degree at most $k-1$, dimension at most $(\log(1/\eta))^{C_k}$ and complexity at most $(1/\eta,1/\eta)$, then there exists a partition $\mathcal P$ of $\set{n\in [N]\colon n\equiv b\mod W}$ into progressions, each of cardinality at least $(N/W)\exp(-5(\log(1/\eta))^{C_k^{10^k}})$, such that
\[
\Bigabs{\twosum{n\in [N]}{n\equiv b \mod W} f(n) F(g(\tfrac{n-b}{W})\Gamma)} \leq C\sum_{P\in \mathcal P}\Bigabs{\sum_{n\in P} F(g(\tfrac{n-b}{W})\Gamma)} +\eta^{2} N/W.
\]
\end{proposition}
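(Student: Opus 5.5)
We prove Proposition~\ref{cor-1} by the standard factorisation-and-dichotomy route for nilsequence correlations. First we replace $g$ by the polynomial sequence $\tilde g(m)=g(m)$ on $m\in[N/W]$, where $n=Wm+b$. We then apply the quantitative factorisation theorem for polynomial sequences (in the form of Leng--Sah--Sawhney, with quasipolynomial dependence on dimension and complexity). This gives a parameter $M_1\le \exp((\log(1/\eta))^{C_k^{O(1)}})$, which we may iterate a bounded number of times in terms of $k$; this is the source of the exponent $C_k^{10^k}$. The output is a factorisation $\tilde g=\varepsilon g' \gamma$, where $\varepsilon$ is $(M_1,N/W)$-smooth, $\gamma$ is $M_1$-rational and periodic with period $q\le M_1$, and $g'$ takes values in a rational subgroup $G'$ and is totally $M_1^{-A}$-equidistributed in $G'/\Gamma'$ for a large $A$.

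Next we split $\{n\in[N]: n\equiv b\mod W\}$ into progressions $P$. We take $m$ in a fixed residue class mod $q$ and in an interval of length $(N/W)M_1^{-A/10}$. On each such $P$, $\gamma$ is constant and $\varepsilon$ is essentially constant (up to a Lipschitz error $M_1^{O(1)}\cdot M_1^{-A/10}$). So $F(g(\cdot)\Gamma)$ restricted to $P$ equals, up to a small error, a nilsequence $F_P(g'_P(\cdot)\Gamma')$ on $G'/\Gamma'$ with a bounded-complexity Lipschitz function. The cardinality lower bound $(N/W)\exp(-5(\log(1/\eta))^{C_k^{10^k}})$ follows from these choices. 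Since $f\ge 0$ and property (2) gives $\E_P f\le C$ for progressions of this size (they exceed $N\exp(-(\log N)^{1/100})$ thanks to $(\log(1/\eta))^{200C_k^{10^k}}\le\log N$), the smooth-part error contributes at most $\eta^2N/(3W)$.

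On each $P$ we subtract the mean, writing $F_P=\int F_P + (F_P-\int F_P)$. The mean part gives $\int F_P\cdot\sum_{n\in P} f(n)\le C|P|\,|\int F_P|$. Equidistribution of $g'_P$ on $P$ gives $|P|\,|\int F_P| \approx |\sum_{n\in P}F(g(\tfrac{n-b}{W})\Gamma)|$ up to $M_1^{-cA}|P|$, which yields the main term $C\sum_P|\sum_P F|$.

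It remains to show that $\sum_{n\in P} f(n)\tilde F_P(g'_P(\tfrac{n-b}{W})\Gamma')$ is small for the mean-zero part $\tilde F_P$. This is the main obstacle. We insert the decomposition in property (1): the outer sum over $j$ and the integral over $t$ cost only $(\log N)^{C+1}$, which is absorbed. For Type~I sums, we swap to $\sum_{d\le N^{2/3}}a(d)\sum_{n\in P,\,d\mid n}\tilde F_P(\cdots)$. Suppose a large fraction of $d$ (weighted by the divisor-bounded $a$) give large inner sums. Then the quantitative Leibman/Green--Tao equidistribution theorem produces horizontal characters with small-norm coefficients for the subsequences $n\mapsto g'_P(dn')$, and pigeonholing over $d$ contradicts the total equidistribution of $g'_P$. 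For Type~II sums, we apply Cauchy--Schwarz in $m$ and reduce to the equidistribution of the product sequence $(g'_P(dm),g'_P(d'm))$ on $G'\times G'$ for many pairs $d,d'\sim U$. We then use the Green--Tao ``$\Lambda$ is orthogonal to nilsequences'' argument (Möbius–nilsequences paper, Section 3) with the quasipolynomial equidistribution of Leng--Sah--Sawhney: bias for many pairs forces a horizontal character of $G'$ to be almost trivial on $g'_P$, contradicting equidistribution. The delicate points are the quantitative bookkeeping: the losses of type $\exp((\log(1/\eta))^{O_k(1)})$ must beat the gains $N^{-c}$ from $U\in[N^{1/3}/2,N^{2/3}]$. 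This is where the hypotheses $W\le\exp((\log N)^{1/200})$ and $\log(1/\eta)\le(\log N)^{1/(200C_k^{10^k})}$ enter, and the divisor-bounded weights are handled by Shiu-type bounds costing $(\log N)^{O(1)}$.

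Finally, the resulting error, of size $(\log N)^{O(C)}M_1^{-cA}N/W$, is at most $\eta^2N/(3W)$ once $A$ and $C_k$ are chosen large, given $1/\eta\ge\exp\exp((\log\log N)^{1/C_k})$.
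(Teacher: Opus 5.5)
There is a genuine gap in your first step, and everything after it depends on that step. You need a factorisation $g=\varepsilon g'\gamma$ with $g'$ \emph{totally} $M_1^{-A}$-equidistributed in $G'/\Gamma'$ and $M_1\le \exp((\log(1/\eta))^{C_k^{O(1)}})$, obtained after a number of iterations bounded in terms of $k$. No such result is available, and it is false in this generality.

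Leng's quasipolynomial equidistribution theory, as used by Leng--Sah--Sawhney, handles a \emph{single} failure of equidistribution at cost $(M/\delta)^{O_k(d^{O_k(1)})}$. In the form relevant here it is a step-reduction statement: if a vertical character of nonzero frequency has a large average along $g$, then $g$ factors through a rational subgroup of strictly smaller \emph{step}. Total equidistribution is reached only by iterating the Leibman dichotomy until it stops. Each round passes to the kernel of one horizontal character, which lowers the dimension rather than the step. So up to $d=\dim G$ rounds may be needed, each raising the complexity to a power $\gg A$. Moreover, your Type~I/II contradiction argument forces $A\gg d^{O_k(1)}$, since the horizontal characters it produces have size $(M_1/\eta)^{O_k(d^{O_k(1)})}$. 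This gives $\log M_1\approx A^{O(d)}\log(1/\eta)$.

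This loss is intrinsic. Take $g(n)=n\alpha$ on $\mathbb{T}^d$ with nested rational frequencies $\alpha_i=1/q_i$, $q_{i+1}=q_i^{B}$, and $B$ slightly below the equidistribution exponent. Then any factorisation of your kind needs $M_1\ge \min(q_d,N^{c/B})$. Here $d$ may be as large as $(\log(1/\eta))^{C_k}\ge \exp(C_k(\log\log N)^{1/C_k})$, which exceeds $\log\log N$ for large $N$. So your progressions can be far shorter than both the required $(N/W)\exp(-5(\log(1/\eta))^{C_k^{10^k}})$ and the threshold $N\exp(-(\log N)^{1/100})$ of hypothesis~(2). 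Several steps then fail: the cardinality conclusion, the bound $\sum_{n\in P}f(n)\le C|P|$ you use for the mean part and the smooth error, and the Type~I/II analysis inside each $P$.

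The missing idea is that no equidistribution, and hence no mean-zero estimate, is needed. The paper first reduces to $F$ being a vertical character with one-dimensional vertical torus, and then inducts on the step. On progressions where $f$ still correlates with the current nilsequence, it pigeonholes to a single Type~I or Type~II piece. It then applies Lemmas~\ref{type-i-ii}--\ref{type-ii}, which rest on Leng's step-reducing theorem, to write $g_{P^{(j-1)}}=\varepsilon' g'\gamma'$ with $g'$ in a rational subgroup of strictly smaller step. The progressions are then refined according to the period of $\gamma'$ and the smoothness scale of $\varepsilon'$.

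Since the step is at most $k-1$, only $k-1$ rounds occur; this, not a factorisation theorem, is the source of $C_k^{10^k}$. So all losses stay of size $\exp((\log(1/\eta))^{C_k^{10^k}})$. At step $0$ the nilsequence is essentially a constant $C_P$ on each progression $P$. Positivity of $f$ together with hypothesis~(2) then gives $|\sum_{n\in P}f(n)F(g(\tfrac{n-b}{W})\Gamma)|\le C|P|\,|C_P|$. This equals $C|\sum_{n\in P}F(g(\tfrac{n-b}{W})\Gamma)|$ up to the accumulated approximation error.
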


\begin{proof}
We aim to prove the proposition by an induction on the step of the underlying nilmanifold, following the strategy in
\cite[Theorem~7]{Leng} and \cite[Proposition~4.6]{MTW}. Throughout, let $C_k>2$ be a sufficiently large constant depending only on $k$. Suppose first that $F(g(\cdot)\Gamma)$ is a nilsequence of degree $k-1$, dimension $d\leq (\log(1/\eta))^{C_k}$
and complexity at most $(1/\eta,1/\eta)$, and that it satisfies the correlation inequality
\[
\bigabs{\E_{n\in [N]\atop n\equiv b\mod W} f(n) F(g(\tfrac{n-b}{W})\Gamma)} \geq \eta^{C_k}.
\]

If no nilsequence obeys this inequality, then the desired conclusion is immediate; hence we may assume that such a nilsequence exists. We may normalise so that $\|F\|_{\mathrm{Lip}}\le 1$. Next, by \cite[Lemma~A.6]{Leng} together with \cite[Lemma~2.2]{Leng}, we may further assume that $F$ is a vertical character of frequency $\xi$ with $|\xi|\leq \eta^{-(2d)^{C_k}}$ and that the nilmanifold $G/\Gamma$ has a one-dimensional vertical component. This reduction comes at the cost of weakening the correlation bound to
\begin{align}\label{4.1}
\bigabs{\E_{n\in [N] \atop n\equiv b\mod W} f(n) F(g(\tfrac{n-b}{W})\Gamma)} \geq \exp\bigbrac{ -12(\log(1/\eta))^{C_k}}.
\end{align}

We now claim that for each integer $0\leq j\leq k-1$, there exists a partition $\mathcal P^{(j)}$ of $\set{n\in[N]\colon n\equiv b\mod W}$ into arithmetic progressions, each of cardinality $\geq (N/W)\exp\bigbrac{- 5(\log(1/\eta))^{C_k^{10^{j}}}}$ such that
\begin{align}\label{bias-induction}
\E_{P^{(j)} \in \mathcal P^{(j)}} \bigabs{\E_{n \in P^{(j)}} f(n) F_{P^{(j)}} (g_{P^{(j)}} (\tfrac{n-b}{W}) \Gamma)} \geq \exp\bigbrac{ -12(\log(1/\eta))^{C_k^{10^{j}}}},
\end{align}
and
\begin{multline}\label{4.3}
\Bigabs{\sum_{n\in [N] \atop n\equiv b\mod W} f(n) F(g(\tfrac{n-b}{W})\Gamma)} \leq
\sum_{P^{(j)} \in \mathcal P^{(j)}} \bigabs{\sum_{n \in P^{(j)}} f(n) F_{P^{(j)}} (g_{P^{(j)}} (\tfrac{n-b}{W}) \Gamma)} \\
+O\bigbrac{(N/W) \exp\bigbrac{ -\tfrac{(\log(1/\eta))^{C_k^{10}}}{j+1}}}\,,
\end{multline}
where $g_{P^{(j)}}$ is a polynomial sequence on a $\exp\bigbrac{(\log(1/\eta))^{C_k^{10^j}}}$-rational subgroup of $G$ whose step is at most $k-1-j$, and $F_{P^{(j)}}$ is a Lipschitz function such that
\begin{align}\label{ass-lip-varying}
\bigabs{ F(g(\tfrac{n-b}{W})\Gamma) - F_{P^{(j)}} (g_{P^{(j)}} (\tfrac{n-b}{W}) \Gamma)}\leq \exp\bigbrac{ -\tfrac{(\log(1/\eta))^{C_k^{10}}}{j+1}}
\end{align}
whenever $n\in P^{(j)}$.

When $j=0$, we take $\mathcal P^{(0)}=\{\set{n\in[N]\colon n\equiv b\mod W}\}$, and set $g_{P_0}=g$ and $F_{P_0}=F$. Then~\eqref{bias-induction} follows immediately from~\eqref{4.1}, while~\eqref{4.3} and~\eqref{ass-lip-varying} are trivial.

Assume now that the claim holds for $j-1$, and we prove it for $j$. By~\eqref{bias-induction} and the pigeonhole principle, there are $\gg \exp\bigbrac{ -12(\log(1/\eta))^{C_k^{10^{j-1}}}}$ proportion of progressions $P^{(j-1)} \in \mathcal P^{(j-1)}$ such that
\[
\Bigabs{ \E_{n\in P^{(j-1)}} f(n) F_{P^{(j-1)}} (g_{P^{(j-1)}} (\tfrac{n-b}{W}) \Gamma)} \gg \exp\bigbrac{ -12(\log(1/\eta))^{C_k^{10^{j-1}}}}.
\]

Fix such a progression $P^{(j-1)}$. Since $f$ is a linear combination of at most $(\log N)^C$ Type~I and Type~II sums, and since $\exp\exp((\log \log N)^{1/C_k}) \leq 1/\eta$, another pigeonhole argument yields a single Type~I or Type~II function $h$ such that
\[
\Bigabs{ \E_{n\in P^{(j-1)}} h(n) F_{P^{(j-1)}} (g_{P^{(j-1)}} (\tfrac{n-b}{W}) \Gamma)} \gg \exp\bigbrac{ -20(\log(1/\eta))^{C_k^{10^{j-1}}}}.
\]

Applying the Type~I/II estimates in Lemmas~\ref{type-i-ii}--\ref{type-ii}, we obtain a factorisation
\[
g_{P^{(j-1)}}=\varepsilon'\, g'\,\gamma',
\]
where $g'$ takes values in an $\exp\bigbrac{(\log(1/\eta))^{C_k^{10^j}}}$-rational subgroup of $G$ of strictly smaller step than the subgroup underlying $g_{P^{(j-1)}}$ (in particular, at most $k-1-j$), $\varepsilon'$ is $(\exp\bigbrac{ (\log(1/\eta))^{C_k^{10^{j}}}}, N/W)$-smooth, and $\gamma'$ is $\exp\bigbrac{ (\log(1/\eta))^{C_k^{10^{j}}}}$-rational. Let $q'$ be the period of $\gamma'$, then $q'\leq \exp\bigbrac{ (\log(1/\eta))^{C_k^{10^{j}}}}$. Let $q$ be the product of $q'$ and the common difference of the progressions in $\mathcal P^{(j-1)}$; by the induction hypothesis this common difference is at most $W\exp\bigbrac{ 5(\log(1/\eta))^{C_k^{10^{j-1}}}}$. Hence, without loss of generality, we can assume that
\[
q\leq W\exp\bigbrac{ 2(\log(1/\eta))^{C_k^{10^{j}}}}.
\]

We now subdivide each $P^{(j-1)}\in\mathcal P^{(j-1)}$ into subprogressions of common difference $q$ and length at least $(N/W) \exp\bigbrac{ -5(\log(1/\eta))^{C_k^{10^{j}}}}$. This produces a refinement of $\mathcal P^{(j-1)}$, namely $\mathcal P^{(j)}$. Hence we have
\begin{align}\label{split-pro}
	&\sum_{P^{(j-1)} \in \mathcal P^{(j-1)}}  \Bigabs{ \sum_{n\in P^{(j-1)}} f(n) F_{P^{(j-1)}} (g_{P^{(j-1)}} (\tfrac{n-b}{W}) \Gamma)} \nonumber\\
	=& \sum_{P^{(j-1)} \in \mathcal P^{(j-1)}}\Bigabs{\sum_{P^{(j)} \subset P^{(j-1)}} \sum_{n\in P^{(j)}} f(n) F_{P^{(j-1)}} (g_{P^{(j-1)}} (\tfrac{n-b}{W}) \Gamma)} \nonumber\\
	\leq& \sum_{P^{(j)} \in \mathcal P^{(j)}} \bigabs{ \sum_{n\in P^{(j)} } f(n) F_{P^{(j-1)} } (g_{P^{(j-1)}} (\tfrac{n-b}{W}) \Gamma)}.
\end{align}

For each progression $P^{(j)} \in \mathcal P^{(j)}$, we now fix a representative point $n_{P^{(j)}} \in P^{(j)}$. Then for all $n\in P^{(j)}$ we have
\[
\bigabs{n-n_{P^{(j)}}} \leq (N/W) \exp\bigbrac{ -3 (\log(1/\eta))^{C_k^{10^j}}}.
\]

Thus, it follows from Definition~\ref{rational-smooth} that
\[
\gamma'\!\left(\tfrac{n-b}{W}\right)\Gamma = \gamma'\!\left(\tfrac{n_{P^{(j)}}-b}{W}\right)\Gamma,
\]
and
\[
d\bigbrac{\varepsilon'\!\left(\tfrac{n-b}{W}\right), \varepsilon'\!\left(\tfrac{n_{P^{(j)}}-b}{W}\right)} \leq
\frac{(|n-n_{P^{(j)}}|/W)\cdot \exp \bigbrac{ (\log(1/\eta))^{C_k^{10^j}}}}{N/W}
\leq \exp \bigbrac{ -2(\log(1/\eta))^{C_k^{10^j}}}.
\]

Set $\varepsilon'_{P^{(j)}}\coloneqq \varepsilon'(\tfrac{n_{P^{(j)}}-b}{W})$ and $\gamma'_{P^{(j)}}\coloneqq \gamma'(\tfrac{n_{P^{(j)}}-b}{W})$.
Using the Lipschitz bound of $F_{P^{(j-1)}}$, we deduce that for all $n\in P^{(j)}$ the following inequality holds:
\begin{align*}
& \bigabs{  F_{P^{(j-1)} } \!\left(\varepsilon'\!\left(\tfrac{n-b}{W}\right) g'\!\left(\tfrac{n-b}{W}\right)\gamma'\!\left(\tfrac{n-b}{W}\right) \Gamma\right) -  F_{P^{(j-1)} } \!\left( \varepsilon'_{P^{(j)}} g'\!\left(\tfrac{n-b}{W}\right) \gamma'_{P^{(j)}} \Gamma\right)}\\
=& \bigabs{ F_{P^{(j-1)} } \!\left(\varepsilon'\!\left(\tfrac{n-b}{W}\right) g'\!\left(\tfrac{n-b}{W}\right)\gamma'_{P^{(j)}} \Gamma\right) -  F_{P^{(j-1)} } \!\left( \varepsilon'_{P^{(j)}} g'\!\left(\tfrac{n-b}{W}\right) \gamma'_{P^{(j)}} \Gamma\right) }\\
\leq & d \bigbrac{ \varepsilon'\!\left(\tfrac{n-b}{W}\right) g'\!\left(\tfrac{n-b}{W}\right)\gamma'_{P^{(j)}}, \varepsilon'_{P^{(j)}} g'\!\left(\tfrac{n-b}{W}\right) \gamma'_{P^{(j)}}}\\
 = &d\bigbrac{\varepsilon'\!\left(\tfrac{n-b}{W}\right), \varepsilon'_{P^{(j)}}} \leq \exp \bigbrac{ -2(\log(1/\eta))^{C_k^{10^j}}}.
\end{align*}

Thus, combining assumption (2), inequality~\eqref{bias-induction} at level $j-1$,~\eqref{split-pro}, and the above inequality, we have
\[
2\exp \bigbrac{ -(\log(1/\eta))^{C_k^{10^j}}}(N/W) \leq \sum_{P^{(j)}\in \mathcal P^{(j)}} \bigabs{ \sum_{n\in P^{(j)}} f(n)F_{P^{(j-1)} } \!\left( \varepsilon'_{P^{(j)}} g'\!\left(\tfrac{n-b}{W}\right) \gamma'_{P^{(j)}} \Gamma\right)}.
\]

We now factorise $\gamma'_{P^{(j)}} =\set{\gamma'_{P^{(j)}}} [\gamma'_{P^{(j)}}]$ where $[\gamma'_{P^{(j)}}] \in \Gamma$ and $|\psi(\set{\gamma'_{P^{(j)}}})| \leq 1/2$ for the Mal'cev coordinate map $\psi$. By writing
\[
g_{P^{(j)}}(n) =\set{ \gamma'_{P^{(j)}}}^{-1}g'(n)\set{\gamma'_{P^{(j)}}}
\quad\textnormal{ and }\quad
F_{P^{(j)}} = F_{P^{(j-1)}} (\varepsilon'_{P^{(j)}} \set{\gamma'_{P^{(j)}}} \cdot),
\]
we have from the preceding inequality that
\begin{align}\label{lip-slow}
	\bigabs{ F_{P^{(j)}} (g_{P^{(j)}}(\tfrac{n-b}{W})\Gamma) - F_{P^{(j-1)}}(g_{P^{(j-1)}}(\tfrac{n-b}{W}) \Gamma)  } \leq \exp \bigbrac{ -2(\log(1/\eta))^{C_k^{10^j}}}
\end{align}
provided that $n \in P^{(j)}$.

Therefore, combining the previous two inequalities, we obtain
\[
 \exp \bigbrac{ -(\log(1/\eta))^{C_k^{10^j}}} (N/W) \leq \sum_{P^{(j)} \in \mathcal P^{(j)}} \Bigabs{\sum_{n\in P^{(j)}} f(n) F_{P^{(j)}} (g_{P^{(j)}} (\tfrac{n-b}{W}) \Gamma)}.
\]

Using assumption (2) and the assumption $(\log(1/\eta))^{200C_k^{10^k}} \leq \log N$, the pigeonhole principle and the fact that
\[
|P^{(j)}| \geq (N/W) \exp \bigbrac{ -5(\log(1/\eta))^{C_k^{10^j}}}\geq (N/W)\exp(-(\log N)^{1/100}),
\]
we have from the above inequality that there are $\gg \exp \bigbrac{ -6(\log(1/\eta))^{C_k^{10^j}}}$ proportion of progressions $P^{(j)} \in \mathcal P^{(j)}$ such that
\[
\Bigabs{ \E_{n\in P^{(j)}} f(n) F_{P^{(j)}} (g_{P^{(j)}} (\tfrac{n-b}{W}) \Gamma) }\geq \exp \bigbrac{ -6(\log(1/\eta))^{C_k^{10^j}}}.
\]

In particular,
\[
\E_{P^{(j)} \in \mathcal P^{(j)}} \bigabs{\E_{n \in P^{(j)}} f(n) F_{P^{(j)}} (g_{P^{(j)}} (\tfrac{n-b}{W}) \Gamma)} \geq \exp\bigbrac{ -12(\log(1/\eta))^{C_k^{10^{j}}}},
\]
which is inequality~\eqref{bias-induction}.

Meanwhile, combining~\eqref{lip-slow} with the induction hypothesis~\eqref{ass-lip-varying} at level $j-1$,
we obtain that for all $n\in P^{(j)}$,
\begin{align*}
\Bigabs{F(g(\tfrac{n-b}{W})\Gamma) -F_{P^{(j)}} (g_{P^{(j)}}(\tfrac{n-b}{W})\Gamma)}&\leq \exp\bigbrac{-\frac{1}{j} (\log(1/\eta))^{C_k^{10}}} + \exp\bigbrac{ -2(\log(1/\eta))^{C_k^{10^{j}}}} \\
&\leq \exp\bigbrac{-\tfrac{1}{j+1} (\log(1/\eta))^{C_k^{10}}}.
\end{align*}
This establishes~\eqref{ass-lip-varying} for level $j$.

It remains to verify~\eqref{4.3}. By construction, $\mathcal P^{(j)}$ is a refinement of $\mathcal P^{(j-1)}$, so splitting each $P^{(j-1)}$ into its subprogressions $P^{(j)}\subseteq P^{(j-1)}$ and using~\eqref{lip-slow} together with assumption~(2), we obtain
\begin{align*}
\sum_{P^{(j-1)} \in \mathcal P^{(j-1)}}& \Bigabs{\sum_{n\in P^{(j-1)}} f(n) F_{P^{(j-1)}} (g_{P^{(j-1)}}(\tfrac{n-b}{W}) \Gamma)}\\
\leq& \sum_{P^{(j)} \in \mathcal P^{(j)}} \Bigabs{\sum_{n\in P^{(j)}} f(n) F_{P^{(j)}} (g_{P^{(j)}}(\tfrac{n-b}{W}) \Gamma)} +C(N/W)\exp\bigbrac{-2(\log(1/\eta))^{C_k^{10^j}}}.
\end{align*}

Substituting this into inequality~\eqref{4.3} at level $j-1$ yields~\eqref{4.3} at level $j$. This completes the induction and hence the proof of the claim.

Now take $j=k-1$ and set $\mathcal P\coloneqq \mathcal P^{(k-1)}$. Then the polynomial sequence $g_{P}(\tfrac{n-b}{W})$ takes values in a subgroup of step $0$, hence $g_{P}(\tfrac{n-b}{W})\Gamma$ is constant on $P$. In particular, for each $P\in\mathcal P$ the quantity
\[
C_P\coloneqq F_P\bigl(g_P(\tfrac{n-b}{W})\Gamma\bigr)
\qquad (n\in P)
\]
is well-defined.  Thus, we deduce from~\eqref{4.3} and assumption (2) that
\begin{align*}
\Bigabs{\sum_{n\in [N]\atop n\equiv b\mod W} f(n) F(g(\tfrac{n-b}{W})\Gamma)} & \leq \sum_{P\in \mathcal P}|C_P| \bigabs{\sum_{n\in P} f(n)}	+ O\bigbrac{(N/W)\exp(-\frac{1}{k}(\log(1/\eta))^{C_k^{10}})}\\
& \leq C\sum_{P\in \mathcal P}|\sum_{n\in P}C_P| +O\bigbrac{(N/W)\exp(-2(\log(1/\eta))^{C_k})}\\
& \leq C\sum_{P\in \mathcal P}\bigabs{\sum_{n\in P} F_P(g_P(\tfrac{n-b}{W})\Gamma)} + O\bigbrac{(N/W)\exp(-2(\log(1/\eta))^{C_k})}. 
\end{align*}

On the other hand, by~\eqref{ass-lip-varying} (with $j=k-1$) we have
\begin{align*}
\sum_{P\in \mathcal P} \Bigabs{ \sum_{n\in P} \bigbrac{F(g(\tfrac{n-b}{W})\Gamma) - F_P(g_P(\tfrac{n-b}{W})\Gamma)} }&\leq \sum_{n\in [N]\atop n\equiv b \mod W}\bigabs{ F(g(\tfrac{n-b}{W})\Gamma) -F_P(g_P(\tfrac{n-b}{W})\Gamma)}\\
&\leq  (N/W)\exp(-2(\log(1/\eta))^{C_k}).
\end{align*}

 Combining the above two inequalities, we thus have
 \[
 \bigabs{\sum_{n\in [N]\atop n\equiv b\mod W} f(n) F(g(\tfrac{n-b}{W})\Gamma)} \leq C \sum_{P\in \mathcal P} \bigabs{\sum_{n\in P} F(g(\tfrac{n-b}{W})\Gamma)} +O\Bigbrac{(N/W)\exp(-(\log(1/\eta))^{C_k})}.
 \]

This completes the proof in the correlation case. Together with the non-correlation alternative, the proposition follows.
\end{proof}

We wish to apply Proposition~\ref{cor-1} to a $W$-tricked version of the von Mangoldt function. Relating to assumption (2) of Proposition~\ref{cor-1}, we have the following bound.

\begin{lemma}[Brun--Titchmarsh bound]\label{bt}
Let $N\geq 2$ and let $w=(\log N)^{c_0}$ for some small constant $c_0<1/100$, and set $W=\prod_{p\leq w} p$. Let $1\leq b\leq W$ with $(b,W)=1$.
Let  $P\subseteq[\frac{N-b}{W}]$ be an arithmetic progression of  length $\geq (N/W)\exp(-2w)$. Then we have
\[
\frac{\phi(W)}{W} \E_{n\in P} \Lambda (Wn+b) \ll  1.
\]	
\end{lemma}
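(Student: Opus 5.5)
The plan is to reduce the statement to the classical Brun--Titchmarsh inequality for primes in arithmetic progressions. Write $P=\{a+qm\colon 0\leq m<L\}$ with $L=|P|\geq (N/W)\exp(-2w)$. Since $P\subseteq[\frac{N-b}{W}]$, we have $qL\leq N/W$, so $q\leq \exp(2w)$. Then $\{Wn+b\colon n\in P\}$ is the progression $\{(Wa+b)+Wq m\colon 0\leq m<L\}$. It has modulus $Wq$ and lies in an interval of length $WqL\leq N$ inside $[1,2N]$. We must bound $\sum_{n\in P}\Lambda(Wn+b)$ by $O(L\cdot W/\phi(W))$.

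\textbf{Step 1: prime powers and non-coprime residues.} The contribution of prime powers $p^j$ with $j\geq 2$ is at most $O(\sqrt N\log N)$. This is negligible, because $L\geq N\exp(-3w)/W\geq N^{1-o(1)}$: indeed $W=\exp((1+o(1))w)$ and $w=(\log N)^{c_0}$. If $\gcd(Wa+b,Wq)=r>1$, then every element is divisible by $r$. In that case the progression contains at most one prime (with possible multiplicity from prime powers), so its contribution is $O(\log N)$, again negligible. So we may assume the residue is coprime to $Wq$.

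\textbf{Step 2: apply Brun--Titchmarsh.} The Montgomery--Vaughan form states that for $(c,Q)=1$ and $x\geq Q$,
\[
\pi(y+x;Q,c)-\pi(y;Q,c)\leq \frac{2x}{\phi(Q)\log(x/Q)}.
\]
We apply it with $Q=Wq$ and $x=WqL$, so that $x/Q=L\geq N^{1-o(1)}$ and $\log(x/Q)\gg\log N$. Each prime counts with weight $\log p\leq \log(2N)$, giving
\[
\sum_{n\in P}\Lambda(Wn+b)\ll \frac{WqL\log N}{\phi(Wq)\log N}+o(L)\ll L\frac{Wq}{\phi(Wq)}.
\]

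\textbf{Step 3: removing the extra factor from $q$.} We have $\frac{Wq}{\phi(Wq)}=\frac{W}{\phi(W)}\prod_{p\mid q,\,p>w}(1-1/p)^{-1}$. The main obstacle is showing that this last product is $O(1)$. Since $q\leq \exp(2w)$, $q$ has at most $O(w/\log w)$ prime factors exceeding $w$. Hence
\[
\prod_{p\mid q,\,p>w}\Bigl(1-\frac1p\Bigr)^{-1}\leq \exp\Bigl(O\Bigl(\frac{w}{\log w}\cdot\frac1w\Bigr)\Bigr)=1+o(1).
\]
Multiplying through by $\phi(W)/W$ and dividing by $L$ then gives the claim. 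The remaining routine checks are the size comparison $L\geq N^{1-o(1)}$ and $x\geq Q$, both of which follow from $\exp(3w)\leq \exp(3(\log N)^{c_0})=N^{o(1)}$.
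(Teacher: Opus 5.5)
Your proposal is correct and follows the paper's proof essentially step for step. Both arguments write $P$ with common difference $q\le \exp(2w)$ (up to a harmless constant) and apply Brun--Titchmarsh with modulus $Wq$, using that $\log|P|\gg\log N$. Both then show that $\frac{\phi(W)}{W}\frac{Wq}{\phi(Wq)}=\prod_{p\mid q,\,p>w}(1-1/p)^{-1}=\exp(O(1/\log w))$, since $q$ has $O(w/\log w)$ prime factors exceeding $w$. Your explicit handling of higher prime powers and of residue classes not coprime to $Wq$ fills in details that the paper absorbs into taking a supremum over classes modulo $Wq$.
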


\begin{proof}
Without loss of generality, we may assume that $P=x_0+q\cdot [X]\subseteq [\frac{N-b}{W}]$ with $q\leq \exp(2w)$ and $X\geq (N/W) \exp(-2w)$. It follows from the Brun--Titchmarsh theorem (cf. \cite[Theorem 6.6]{IK}) that
\[
\frac{\phi(W)}{W}\sum_{n\in P} \Lambda (Wn+b) \leq \frac{\phi(W)}{W} \sup_{a\mod{Wq}}\sum_{\substack{Wx_0+b < n\leq W x_0+ WqX+b \\ n\equiv a \mod{Wq}}} \Lambda (n) \ll \frac{\phi(W)}{W}\frac{WqX}{\phi(Wq)}.
\]

Dividing by $|P|=X$ gives
\[
\frac{\phi(W)}{W}\E_{n\in P} \Lambda (Wn+b) \ll \frac{\phi(W)}{W}\frac{Wq}{\phi(Wq)} =\prod_{\substack{p\mid q\\ p>w}}\left(1-\frac{1}{p}\right)^{-1}.
\]

 It follows from Taylor expansion that
\[
\prod_{\substack{p\mid q \\ p>w}} \left(1-\frac{1}{p}\right)^{-1} =\exp \Biggbrac{ \sum_{\substack{p\mid q \\ p>w}}(p^{-1} +O(p^{-2})) } \ll \exp\left(\frac{\omega (q)}{w}\right),
\]
where $\omega (q)$ denotes the number of prime factors of $q$, which is bounded by $O(\frac{\log q}{\log \log q})$. We thus conclude that this is $\ll 1$ in light of the assumption $\log q\leq 2w$. The claim follows.
\end{proof}

We are now ready to verify the correlation condition~\eqref{corr-nil-f}.

\begin{lemma} \label{cor-mangoldt}

Let $w=(\log N)^{c_0}$ for some small constant $c_0>0$ and set $W=\prod_{p\leq w}p$. Let $1\leq b\leq W$ with $(b,W)=1$.
Let $F(g(\cdot)\Gamma)$ be a nilsequence  of degree at most $k-1$, complexity  $M \leq\exp((\log N)^{c_k})$  and dimension  $d\leq (\log N)^{c_k}$, where $c_k>0$ is a constant sufficiently small compared to $c_0$ (in terms of $k$). Then  there exists a partition $\mathcal P$ of $[\frac{N-b}{W}]$ into arithmetic progressions, each of length at least $N \exp(-2w)$, and such that
\begin{align}\label{eq:corrbound}
	\sum_{n\in [\frac{N-b}{W}]}\frac{\phi(W)}{W}\Lambda(Wn+b) F(g(n)\Gamma)
	\ll  \sum_{P\in \mathcal P}\bigabs{\sum_{n\in P} F(g(n)\Gamma)} + N\exp(-(\log  N)^{c_k})/W.
\end{align}

\end{lemma}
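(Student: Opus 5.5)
The plan is to deduce the lemma from Proposition~\ref{cor-1}, with $f=\frac{\phi(W)}{W}\Lambda$ restricted to $[N]$. The sum $\sum_{n\le (N-b)/W}\frac{\phi(W)}{W}\Lambda(Wn+b)F(g(n)\Gamma)$ becomes $\sum_{m\le N,\ m\equiv b \mod W} f(m)F(g(\tfrac{m-b}{W})\Gamma)$ under $m=Wn+b$. We then verify the two hypotheses of Proposition~\ref{cor-1}, choose $\eta$ appropriately, and pull the resulting partition back to $[\frac{N-b}{W}]$ via $m\mapsto (m-b)/W$. Progressions in $m$ with difference a multiple of $W$ map to progressions in $n$, and the lengths are preserved.

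\textbf{Hypothesis (1).} Apply Vaughan's identity with $U=V=N^{1/3}$:
\[
\Lambda = \Lambda 1_{\le U} - \mu 1_{\le V}*\Lambda 1_{\le U}*1 + \mu 1_{\le V}*\log - \mu 1_{>V}*\Lambda 1_{>U}*1 \quad\text{on } (U,N].
\]
\begin{itemize}
\item The term $\mu1_{\le V}*\log$ is written as $\sum_{d\le V}\mu(d)\log(n/d)$ using $\log(n/d)=\int_1^{n/d}\frac{\rd t}{t}$, which produces Type~I sums of level $N^{1/3}$ inside the $\rd t/t$ integral. This is the reason the decomposition in Proposition~\ref{cor-1} carries that integral.
\item The term $\mu1_{\le V}*\Lambda1_{\le U}*1$ is Type~I of level $UV=N^{2/3}$.
\item In the last term the variable $d>V$ is split dyadically, giving $O(\log N)$ Type~II pieces. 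When the smaller variable is below $N^{1/3}$, the factors are swapped so that the $[U,2U]$-supported factor lies in $[N^{1/3}/2,N^{2/3}]$.
\item The contribution from $n\le N^{1/3}$, where the identity fails, is absorbed trivially since it is at most $N^{1/3}\log N$.
\end{itemize}
All coefficients are divisor-bounded after multiplying by $\phi(W)/W\le 1$, so the number of pieces is $(\log N)^{O(1)}$.

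\textbf{Hypothesis (2).} This follows from Lemma~\ref{bt} with $C$ an absolute constant. For a progression $P\subseteq[N]$ of size $\ge N\exp(-(\log N)^{1/100})$, one intersects with the class $b \mod W$. Strictly, hypothesis (2) is stated for general progressions, but the proof of Proposition~\ref{cor-1} only uses it for progressions inside $b\mod W$ of length $\ge (N/W)\exp(-(\log N)^{1/100})$. Since $w=(\log N)^{c_0}\ge (\log N)^{1/100}$ when $c_0$ is not too small, Lemma~\ref{bt} applies. If one insists on the literal hypothesis, apply Brun--Titchmarsh directly: $\frac{\phi(W)}{W}\cdot\frac{q}{\phi(q)}\ll 1$ fails for $q$ coprime to $W$, so it is cleaner to use the restricted form. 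I expect this bookkeeping to be the main nuisance. Also note that $W=e^{(1+o(1))w}\le \exp((\log N)^{1/200})$ requires $c_0<1/200$.

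\textbf{Choice of $\eta$.} Take $\eta=\exp(-(\log N)^{c_k'})$ with $c_k'$ small enough that $(\log(1/\eta))^{200C_k^{10^k}}\le\log N$. The condition $\exp\exp((\log\log N)^{1/C_k})\le 1/\eta$ then holds for large $N$. We need $M\le 1/\eta$ and $d\le(\log(1/\eta))^{C_k}$, which hold when $c_k\le c_k'$.

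\textbf{Conclusion.} Proposition~\ref{cor-1} gives the bound with error $\eta^2N/W$, which is $\le N\exp(-(\log N)^{c_k})/W$ after shrinking $c_k$. It also gives progressions of length $\ge (N/W)\exp(-5(\log N)^{c_k'C_k^{10^k}})$, and this is $\ge N\exp(-2w)$ once the exponent $c_k'C_k^{10^k}<c_0$, using $W\le e^{1.1w}$.

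The main obstacle is arranging the Vaughan decomposition so that it matches the precise form of hypothesis (1), namely the $\int_1^N\cdot\frac{\rd t}{t}$ representation and the Type~II range. The rest is parameter matching.
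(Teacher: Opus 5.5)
Your route is the paper's: Vaughan's identity at level $N^{1/3}$ to get the Type~I/II representation under $\int_1^N\cdot\,\frac{\rd t}{t}$, Proposition~\ref{cor-1} with $\log(1/\eta)$ a small power of $\log N$, hypothesis~(2) via Lemma~\ref{bt} for subprogressions of $b\mod{W}$, and pulling the partition back along $m\mapsto (m-b)/W$. Your parameter choices in the final paragraph are fine. Discarding $n\le N^{1/3}$ trivially is also fine, and slightly cleaner than the paper's treatment of $\Lambda 1_{[1,N^{1/3}]}$. Vaughan's identity is an exact convolution identity, so $\Lambda 1_{(N^{1/3},\infty)}$ equals the three remaining terms for every $n$, and the discarded part costs only $O(N^{1/3}\log N)$.

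The step that fails as written is your verification of hypothesis~(2).

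\begin{itemize}
\item You apply Lemma~\ref{bt} to progressions of length about $(N/W)\exp(-(\log N)^{1/100})$. This needs $2w=2(\log N)^{c_0}\ge(\log N)^{1/100}$, i.e.\ $c_0\ge 1/100$. That is incompatible with the hypothesis $c_0<1/100$ of Lemma~\ref{bt}, and with your own (correct) requirement $c_0<1/200$ coming from $W\le\exp((\log N)^{1/200})$. So Lemma~\ref{bt} as stated does not cover these progressions.

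\item The repair, which is exactly what the paper does, is to use what the proof of Proposition~\ref{cor-1} actually needs. Hypothesis~(2) is only invoked for the progressions in the partitions $\mathcal P^{(j)}$. These are subprogressions of $b\mod{W}$ of length at least $(N/W)\exp(-5(\log(1/\eta))^{C_k^{10^k}})$. With $\log(1/\eta)=(\log N)^{c_k'}$ and $c_k'C_k^{10^k}<c_0$ (the condition you already impose at the end), this length is at least $(N/W)\exp(-2w)$, so Lemma~\ref{bt} applies with an absolute constant.

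\item Alternatively, the literal hypothesis~(2) holds for $f=\frac{\phi(W)}{W}\Lambda$ on $[N]$. A progression of length $\ge N\exp(-(\log N)^{1/100})$ has difference $q$ with $\log q\le(\log N)^{1/100}$. Brun--Titchmarsh then gives $\E_{n\in P}f(n)\ll\frac{\phi(W)}{W}\cdot\frac{q}{\phi(q)}\ll\frac{1+\log\log q}{c_0\log\log N}\ll 1/c_0$, so $C=O(1/c_0)$ works.
\end{itemize}

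Your remark that this bound fails for $q$ coprime to $W$ is mistaken. For such $q$ nothing cancels the factor $\phi(W)/W\asymp 1/(c_0\log\log N)$, so the ratio is in fact $o(1)$.
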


\begin{proof}
First,  making the change of variables $Wn+b\to n$, we have
\[
\sum_{n\in [\frac{N-b}{W}]}\frac{\phi(W)}{W}\Lambda(Wn+b) F(g(n)\Gamma)=\sum_{n\in [N]} \frac{\phi(W)}{W} \Lambda(n)1_{n \equiv b \mod W} F(g(\frac{n-b}{W})\Gamma).
\]

By Vaughan's identity, for $n\in \mathbb{N}$ we have
	\[
	\Lambda (n)= 	\Lambda (n)1_{n\leq N^{1/3}}-\sum_{d\leq N^{2/3}} a_d1_{d\mid n}+ \sum_{d\leq N^{1/3}} \mu(d) 1_{d\mid n} \log \frac{n}{d}+\sum_{d,m>N^{1/3}} \Lambda(d) b_m 1_{n=dm},
	\]
	where $a_d=\mu 1_{[1,N^{1/3}]}*\Lambda 1_{[1,N^{1/3}]}$ and $b_m=\mu 1_{(N^{1/3},\infty)}*1$ are divisor-bounded functions. 
    
    Using the identities
    \begin{align*}
      1_I(d)=\sum_{\ell\geq 0}\int_{1}^{N}\frac{1_{[2^{\ell},2^{\ell+1})\cap I}(d)}{\log N}\frac{\rd t}{t} \quad \textnormal{ and } \quad \log \frac{n}{d} = \int_1^N \left(1-1_{t>n}-\frac{\log d}{\log N}\right) \frac{\rd t}{t}
    \end{align*}
    valid for any interval $I$
    and the fact that $\Lambda1_{[1,N^{1/3}]}(n)=\sum_{d\mid n}1_{\{1\}}(d)\Lambda1_{[1,N^{1/3}]}(n/d)$, we conclude that for $n\in [N]$ we have
    \begin{align*}
    \Lambda(n)=\sum_{0\leq \ell\leq (\log N)/(\log 2)}\int_{1}^{N}(f_{\ell,t}(n)+g_{\ell,t}(n))\frac{\rd t}{t},    
    \end{align*}
    where for each $\ell$ and $t$, the function $f_{\ell,t}$ is a Type I sum of level $N^{2/3}$ at scale $N$ and the function $g_{\ell,t}$ is a Type II sum of range $[N^{1/3}/2,N^{2/3}]$ at scale $N$.

    Now set $1/\eta=\exp((\log N)^{c_k})$ in Proposition~\ref{cor-1}. Since $c_k$ is sufficiently small (relative to $c_0$), we can ensure that $\exp\bigbrac{5(\log 1/\eta)^{C_k^{10^k}}}\leq W^{1/2}$, say. Therefore, if $P\subset [N]$ is an arithmetic progression of length 
    \[
    |P|\geq (N/W)\exp\bigbrac{-5(\log 1/\eta)^{C_k^{10^k}}}\geq N\exp(-2w)
    \] 
    and with common difference $q$ divisible by $W$, then Lemma~\ref{bt} gives
    \[
    \sum_{n\in P} \frac{\phi(W)}{W}\Lambda (n) \ll |P|.
    \]

    Hence the hypotheses of Proposition~\ref{cor-1} are satisfied. It follows that there exists a partition $\mathcal P$ of $\set{n\in [N]\colon  n\equiv b\mod W}$ into progressions, each of cardinality at least $N\exp(-2w)$, such that
    \[
    \sum_{n\in [N]} \frac{\phi(W)}{W} \Lambda(n)1_{n \equiv b \mod W} F(g(\frac{n-b}{W})\Gamma)\ll  \sum_{P\in \mathcal P}\bigabs{\sum_{n\in P} F(g(\frac{n-b}{W})\Gamma)} + \frac{N\exp(-(\log  N)^{c_k})}{W}.
    \]

   Translating back to the original variable $m=\frac{n-b}{W}$ yields~\eqref{eq:corrbound}.

\end{proof}

\section{Generalised von Neumann theorem}\label{sec5}

To use the quantitative transferred weak regularity lemma (Proposition~\ref{tra-wea-2}), we need a quantitative version of the generalised von Neumann theorem for unbounded functions. A qualitative version was established by Green and Tao in \cite[Proposition 7.1]{GT-linear} (and in~\cite[Proposition 5.3]{GT-prime}). In \cite[Theorem 9.2]{TT}, it was sketched how this can be improved to a double-logarithmic error term. The proof there would work for logarithmic savings as well; here we have chosen to give more details for the arguments to make the paper more self-contained, as well as to track precisely the order of the Gowers norm required to control the relevant averages for potential future applications.

We need the following quantitative linear forms condition.

\begin{definition}[Linear forms condition]
Let $N\geq 1$. Let $m,d,L$ be positive integer parameters. Let $0<\eps<1$ be sufficiently small depending on $m,d,L$. Let $\nu\colon \mathbb{Z} \to\R_{\geq 0}$ be a non-negative function. We say that $\nu$ satisfies the $(m,d,L)$-\emph{linear forms condition} at scale $N$ with error $\eps$ if the following holds. For any $1\leq d'\leq d$, $1\leq m'\leq m$ and any finite complexity system\footnote{Recall that a system $\Psi$ is of finite complexity if for each $i$, the linear part of $\psi_i$ does not lie in the $\mathbb{Q}$-linear span of the linear parts of $\{\psi_j\colon j\neq i\}$.} $\Psi=(\psi_1,\dots,\psi_{m'})$ of affine-linear forms on $\Z^{d'}$ with all coefficients (including constant terms) of the forms $\psi_i$ bounded in magnitude by $L$, we have
\[
\left|\E_{\vec n \in [N]^{d'}} \prod_{i\in [m']} \nu (\psi_i(\vec n)) -1\right|\leq \eps.
\]
\end{definition}

\begin{lemma}[A quantitative generalised von Neumann theorem]\label{neumann}
Let $k\geq 3$ be a natural number and $0<\eps<1$. Then the following statement holds for some large constant $D_k>1$. Let $\nu\colon \mathbb Z\to\mathbb R_{\ge 0}$ be a non-negative function satisfying the $(D_k,D_k,D_k)$-linear forms condition at scale $N$ with error $\eps$. Let $c_1,\dots,c_k$ be distinct integers with $|c_i|\le D_k/10$, and set
\[
c\coloneqq \max_{1\leq i,j\leq k}|c_i-c_j|.
\]
Suppose that $f_1,\dots,f_k,g_1,\dots,g_k\colon \mathbb Z\to\mathbb C$ are $\nu$-bounded functions and supported on $[N]$. Then there exists a constant $\gamma_k>0$ such that
\begin{align*}
	N^{-1} \sum_{x\in \Z}\E_{d\in [N]} \prod_{1\leq i\leq k} f_i(x+c_id) =&\ N^{-1} \sum_{x\in \Z}\E_{d\in [N]} \prod_{1\leq i\leq k} g_i(x+c_id)\\
	 &+O_{k,c_1,\dots,c_k}\bigbrac{\eps^{\gamma_k}+\max_{i\in [k]}\norm{f_i-g_i}_{U^{k-1}[cN]}}.
\end{align*}
\end{lemma}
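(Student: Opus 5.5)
We will use telescoping to reduce the statement to a single-difference estimate. Writing
\[
\prod_i f_i-\prod_i g_i=\sum_{j=1}^k \Bigl(\prod_{i<j} g_i\Bigr)(f_j-g_j)\Bigl(\prod_{i>j} f_i\Bigr),
\]
it suffices to show the following. For $\nu$-bounded $h_i$ ($i\neq j$, each equal to some $f_i$ or $g_i$) and $u=f_j-g_j$, which is $2\nu$-bounded, we want
\[
\Bigabs{N^{-1}\sum_x \E_{d\in[N]} u(x+c_jd)\prod_{i\neq j} h_i(x+c_id)}\ll \norm{u}_{U^{k-1}[cN]}+\eps^{\gamma_k}.
\]
We normalise the majorant to $(\nu+\nu)/2$ so that the linear forms condition is preserved up to constants.

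The core is the standard Green--Tao argument (\cite[Proposition 7.1]{GT-linear}), run with explicit losses. First, change variables so that $u$ appears in the variable $x$ and each $h_i$ depends on a linear form. Then apply Cauchy--Schwarz $k-1$ times, each time eliminating one $h_i$. At each step:
\begin{enumerate}[(a)]
\item bound $|h_i|\le\nu$;
\item pull out the $\nu$-weighted average;
\item double the remaining variables.
\end{enumerate}
After $k-1$ steps we obtain
\[
\abs{\cdot}^{2^{k-1}}\ll \E\Bigl[\prod_\omega u(\text{form}_\omega)\cdot W\Bigr],
\]
where $W$ is a product of $\nu$'s evaluated at affine-linear forms in the doubled variables. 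This is the usual weighted Gowers box average.

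To remove the weight, write $W=1+(W-1)$. The main term with weight $1$ is, after reparametrising $d$-dilations (hence the factor $c$ in $[cN]$), a $U^{k-1}$ average of $u$, and so is $\ll\norm{u}_{U^{k-1}[cN]}^{2^{k-1}}$. This uses the interval Gowers--Cauchy--Schwarz (Lemma~\ref{le:gcs}) and the fact that $u$ is supported on $[N]$; the normalisation between $\tilde U^{k-1}(\Z)$ and $U^{k-1}[cN]$ costs only $O_{k,c}(1)$.

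For the error term we use one more Cauchy--Schwarz, bounding $|u|\le2\nu$:
\[
\Bigabs{\E\prod_\omega u\,(W-1)}^2\le \E\Bigl[\prod_\omega\nu\Bigr]\cdot \E\Bigl[\prod_\omega \nu\,(W-1)^2\Bigr].
\]
Expanding $(W-1)^2=W^2-2W+1$ expresses the second factor as a signed combination of three averages of products of $\nu$ over systems of affine-linear forms. Each such average equals $1+O(\eps)$ by the $(D_k,D_k,D_k)$-linear forms condition, so the combination is $O(\eps)$.

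The main obstacle is verifying that every system arising here has finite complexity. The system must have no two forms that are rational multiples of each other; this is the standard check from \cite{GT-prime}, which we will carry out by tracking which variables each form depends on. We must also verify that there are $O_k(1)$ forms and variables with coefficients bounded by $D_k$, where $D_k$ depends on $k$ and the bound $|c_i|\le D_k/10$. Two further technicalities need attention: cutoffs $1_{[N]}$ in $d$ when doubling variables, which we handle by extending ranges and using the support of the functions, and forms in which $\nu$ is evaluated outside $[N]$.

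Collecting the estimates gives
\[
\abs{\cdot}\ll \norm{u}_{U^{k-1}[cN]}+\eps^{1/2^{k+1}},
\]
so the statement holds with $\gamma_k=2^{-k-1}$.
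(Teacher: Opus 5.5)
There is a genuine gap, and it sits in the step you treated as routine: removing the accumulated weight at the end. Your architecture (telescoping, then $k-1$ weighted Cauchy--Schwarz steps, then removing the weight) is reasonable. The problem is the final Cauchy--Schwarz,
\[
\Bigabs{\E\prod_\omega u\,(W-1)}^2\ll \E\Bigl[\prod_\omega\nu\Bigr]\cdot \E\Bigl[\prod_\omega \nu\,(W-1)^2\Bigr].
\]
Here the term $\E[\prod_\omega\nu\cdot W^2]$ has every factor $\nu(\phi)$ of $W$ squared at the \emph{same} point. Such a system contains repeated forms, so it is never of finite complexity, whatever bookkeeping you do, and the linear forms condition says nothing about it.

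The claim that this average is $1+O(\eps)$ is also false for the majorants that matter. The majorant of Lemma~\ref{pseudo} dominates the $W$-tricked von Mangoldt function, so already $\E\nu^2\gg \log N/\log\log N$. Heuristically, each squared factor in $W^2$ inflates the average in the same way. So your expansion $W^2-2W+1$ does not show the error term is small, and the argument breaks exactly there.

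The missing idea is that the signed weight must never be squared pointwise. One repair, in Green--Tao's style, uses the structure of the box. The box $\prod_\omega u(y+\omega\cdot h)$ depends on the doubled variables only through $k$ linear combinations: a base point $y$ and differences $h_1,\dots,h_{k-1}$. By contrast, each factor of $W$ also varies along the complementary $(k-2)$-dimensional fibre $t$. So first average $W-1$ over the fibre, and only then apply Cauchy--Schwarz in $(y,h)$:
\[
\Bigabs{\E_{y,h}\prod_\omega u(y+\omega\cdot h)\,\E_t(W-1)}^2\ll \E_{y,h}\prod_\omega\nu\cdot\E_{y,h}\prod_\omega\nu\;\E_{t,t'}\bigl(W(\cdot,t)-1\bigr)\bigl(W(\cdot,t')-1\bigr).
\]
Now $W(\cdot,t)W(\cdot,t')$ is a product of $\nu$ at pairwise distinct forms, so the linear forms condition does apply. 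This matches Green--Tao's $k$-pseudorandomness hypothesis of $3k-4$ variables and $k2^{k-1}$ forms, as far as I recall it.

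The paper takes a different route and never lets a global weight accumulate. In \eqref{ind-nuem} it splits $\Delta\nu=1+(\Delta\nu-1)$ immediately after each Cauchy--Schwarz step and estimates the $(\Delta\nu-1)$ contribution there. Either way, the finite-complexity check you describe only makes sense once the square has been moved off the weight in one of these ways.
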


\begin{proof}
One may expand the difference between the two multilinear averages by a standard telescoping identity. Namely,
\begin{align*}
N^{-1} \sum_{x\in \Z}&\E_{d\in [N]}\prod_{1\leq i\leq k} f_i(x+c_i d)  = N^{-1} \sum_{x\in \Z}\E_{d\in [N]} \prod_{1\leq i\leq k} g_i(x+c_id)\\
+& \sum_{1\leq j\leq k} N^{-1} \sum_{x\in \Z}\E_{d \in [N]} 	 \prod_{i<j} f_i(x+c_id ) (f_j-g_j) (x+c_jd) \prod_{j<i\leq k} g_i(x+c_id).
\end{align*}

Taking absolute values, it suffices to show that for every $1\le j\le k$ and every choice of $\nu$-bounded functions $l_1,\dots,l_k\colon \mathbb Z\to\mathbb C$ supported on $[N]$, one has
\[
\Bigabs{N^{-1} \sum_{x\in \Z}\E_{d\in [N]} \prod_{1\leq i\leq k}l_i(x+c_id)}\ll_{k,c_1,\dots,c_k} \eps^{\gamma_k} + \norm{l_j}_{U^{k-1}[cN]}.
\]

By relabelling indices, we may assume without loss of generality that $j=k$. Thus it is enough to prove
\begin{align}\label{group-setting}
\Bigabs{N^{-1}\sum_{x\in \Z}\E_{d\in [N]}  \prod_{1\leq i\leq k}l_i(x+c_id)}\ll_{k,c_1,\dots,c_k}  \eps^{\gamma_k} + \norm{l_k}_{U^{k-1}[cN]}.
\end{align}

For the Cauchy--Schwarz iteration, we introduce the following notation. For an integer $j>0$ and a vector $\vec h=(h_1,\dots,h_j)$, define $\Delta (l_i;\vec h)=\Delta_{(c_i-c_1)h_1,\dots,(c_i-c_{j})h_j} l_i$, where the difference operator was defined in~\eqref{eq:deltadef}. Besides, we also adopt the conventions $\Delta(l_i;\emptyset)=l_i$ and $c_0=0$. Given any $0\leq j\leq k-2$ we are going to prove that
\begin{align}\label{ind-nuem}
	\begin{split}
	&\quad \Bigabs{ \frac{1}{N}\sum_{x\in \mathbb{Z}} \E_{d\in [N]} \E_{\vec h\in [-N,N]^j}\prod_{j+1\leq m\leq k} \Delta(l_m;\vec h) (x+(c_m-c_j)d)}^2
	\ll_{k} \eps +\\
	&+ \frac{1}{N}\sum_{x\in \mathbb{Z}} \E_{d\in [N]} \E_{\vec h\in [-N,N]^j}\E_{|h_{j+1}| \leq N}
	 \prod_{j+2\leq m\leq k}\Delta_{(c_m-c_{j+1})h_{j+1}} \Delta(l_m;\vec h) (x+(c_{m}-c_{j+1})d)  .
	\end{split}
\end{align}

First, note that when $j=0$, the expression becomes
\[
  \frac{1}{N}\sum_{x\in \mathbb{Z}} \E_{d\in [N]} \prod_{1\leq m \leq k} \Delta(l_m;\vec 0)(x+c_md) = \frac{1}{N}\sum_{x\in \mathbb{Z}} \E_{d\in [N]} \prod_{1\leq m \leq k}l_m(x+c_md),
\]
which is exactly the left-hand side of~\eqref{group-setting}.

We iterate~\eqref{ind-nuem} for $j=0,1,\dots,k-2$. After $k-1$ steps, the right-hand side reduces to an average involving only $l_k$. More precisely, applying~\eqref{ind-nuem} a total of $k-1$ times (from $0$ to $k-2$), with the observation that $j=0$ yields the initial expression, we thus have
\begin{align*}
&\Bigabs{  N^{-1}\sum_{x\in \Z}\E_{d\in [N]}  \prod_{1\leq i\leq k}l_i(x+c_id)}^{2^{k-1}} \\
	&\ll_{k} \eps+ N^{-1}\sum_{x\in \Z}\E_{d\in [N]}\E_{ h_1,\dots,h_{k-1} \in [-N,N] } \Delta_{(c_k-c_1)h_1,\dots,(c_k-c_{k-1})h_{k-1} } l_{k} (x+(c_k-c_{k-1})d).
\end{align*}

It is harmless to shift the variable $x+(c_k-c_{k-1})d \to x$ in the second term, so it suffices to prove that
\[
N^{-1}\sum_{x\in \Z}\E_{ h_1,\dots,h_{k-1} \in [-N,N] } \Delta_{(c_k-c_1)h_1,\dots,(c_k-c_{k-1})h_{k-1} } l_{k} (x)\ll_{k,c_1,\dots,c_k} \norm{l_k}_{U^{k-1}[cN]}^{2^{k-1}}.
\]

First, make the change of variables $h_i'=(c_k-c_i)h_i$ for $1\leq i\le k-1$, and remove the congruence conditions using orthogonality, yielding
\begin{align*}
\ll&_{k,c_1,\dots,c_k} N^{-1}\sum_{x\in \Z} \E_{|h_i|\leq |c_k-c_i|N \atop 1\leq i\leq k-1} \prod_{1\leq i\leq k-1} 1_{h_i\equiv 0\mod{|c_k-c_i|}}\Delta_{h_1,\dots,h_{k-1}} l_k(x)\\
\ll&_{k,c_1,\dots,c_k} N^{-1}\sum_{x\in \Z} \E_{|h_i|\leq |c_k-c_i|N \atop 1\leq i\leq k-1}  \prod_{1\leq i\leq k-1} \left( \E_{1\leq r_i\leq |c_k-c_i|} e\left( \frac{h_ir_i}{|c_k-c_i|}  \right)\right)\Delta_{h_1,\dots,h_{k-1}} l_k(x)\\
\ll&_{k,c_1,\dots,c_k} \E_{1\leq r_i\leq |c_k-c_i| \atop 1\leq i\leq k-1} N^{-1} \sum_{x\in \Z} \E_{|h_i|\leq |c_k-c_i|N \atop 1\leq i\leq k-1} \Delta_{h_1,\dots,h_{k-1}} l_k(x) e\left(\sum_{1\leq j\leq k-1}\frac{r_jh_j}{|c_k-c_j|} \right) .
\end{align*}

Since $\supp(l_k)\subseteq [N]$, for arbitrary $r_1,\dots,r_{k-1}\in \Z$ we re-arrange the inner sum to see that
\begin{align*}
&N^{-1} \sum_{x\in \Z} \E_{|h_i|\leq |c_k-c_i|N \atop 1\leq i\leq k-1} \Delta_{h_1,\dots,h_{k-1}} l_k(x) e\left(\sum_{1\leq j\leq k-1}\frac{r_jh_j}{|c_k-c_j|} \right)\\
=& N^{-1} \sum_{x\in \Z} \E_{|h_i|\leq |c_k-c_i|N \atop 1\leq i\leq k-1} \prod_{\omega \in \set{0,1}^{k-1} \atop |\omega|\neq 1}\mathcal{C}^{|\omega|}l_k(x+\omega \cdot\vec h) \prod_{1\leq j\leq k-1} \overline{l_k(x+h_j)} e\left(\frac{r_jh_j}{|c_k-c_j|}\right)\\
\ll&_{k,c_1,\dots,c_k} \E_{x\in [cN]} \E_{|h_i|\leq cN \atop 1\leq i\leq k-1}l_k(x)e\left(-\sum_{1\leq j\leq k-1}\frac{r_jx}{|c_k-c_j|}\right) \prod_{\omega \in \set{0,1}^{k-1} \atop |\omega|\neq 0, 1}\mathcal{C}^{|\omega|}l_k(x+\omega \cdot\vec h) \\
&\qquad \qquad \qquad \qquad \qquad \qquad\prod_{1\leq j\leq k-1} \overline{l_k(x+h_j)} e\left(\frac{r_j(x+h_j)}{|c_k-c_j|}\right)\\
\ll&_{k,c_1,\dots,c_k} \norm{l_k}_{U^{k-1}[cN]}^{2^{k-1}},
\end{align*}
where the last inequality follows from an application of the Gowers--Cauchy--Schwarz inequality together with the fact that multiplication by a linear phase does not affect the $U^{k-1}$-norm. Averaging over $(r_1,\dots,r_{k-1})$ and substituting back into the previous estimate, we obtain
\[
\frac{1}{N}\sum_{x\in\mathbb Z}\E_{|h_i|\le |c_k-c_i|N}\Biggl(\prod_{i=1}^{k-1}1_{|c_k-c_i|\mid h_i}\Biggr)\,\Delta_{h_1,\dots,h_{k-1}}l_k(x)\ll_{k,c_1,\dots,c_k} \|l_k\|_{U^{k-1}[cN]}^{2^{k-1}}.
\]
This establishes~\eqref{group-setting}.

We now prove inequality~\eqref{ind-nuem}. Making the change of variables $x+(c_{j+1}-c_j)d \to x$, we see that the left-hand side of~\eqref{ind-nuem} is equal to
\[
\Bigabs{N^{-1} \sum_{x\in \Z} \E_{\vec h\in [-N,N]^j} \E_{d\in [N]} \Delta (l_{j+1};\vec h)(x) \prod_{j+2\leq m\leq k} \Delta (l_m;\vec h)(x+(c_m-c_{j+1})d)}^2.
\]

Applying the Cauchy--Schwarz inequality, together with the assumption that $|l_{j+1}| \leq \nu$ pointwise, we find that the above expression is bounded by
\begin{multline}\label{square-out}
\E_{\vec h\in [-N,N]^j} N^{-1} \sum_{x\in \Z} \Delta_{(c_{j+1} -c_1)h_1,\dots,(c_{j+1}-c_j)h_j} \nu(x) \times \E_{\vec h\in [-N,N]^j} N^{-1}\sum_{x\in \Z} \\
	\Delta_{(c_{j+1} -c_1)h_1,\dots,(c_{j+1}-c_j)h_j} \nu(x) \Bigabs{ \E_{d\in [N]} \prod_{m=j+2}^k \Delta(l_m;\vec h)(x+(c_m-c_{j+1})d)}^2.
\end{multline}

Next, we use the $(D_k,D_k,D_k)$-linear forms condition.
Since the coefficients $c_1,\dots,c_k$ are distinct, the linear forms appearing in the expansion of the difference $\Delta_{(c_{j+1}-c_1)h_1,\dots,(c_{j+1}-c_j)h_j}\nu(x)$ are pairwise distinct and have bounded complexity (with constants depending only on $k$ and the $c_i$). Therefore the linear forms condition implies
\[
N^{-1} \sum_{x\in \Z} \E_{\vec h \in [-N,N]^j} \Delta_{(c_{j+1} -c_1)h_1,\dots,(c_{j+1} -c_j) h_j} \nu(x)= 1+O_k(\eps),
\]
while the linear forms assumption along with an application of the Gowers--Cauchy--Schwarz inequality also leads to
\begin{multline*}
\E_{\vec h\in [-N,N]^j}N^{-1} \sum_{x\in \Z} \bigbrac{\Delta_{(c_{j+1} -c_1)h_1,\dots,(c_{j+1} -c_j) h_j} \nu(x)-1}\times\\
\times\Bigabs{\E_{d\in [N]} \prod_{m=j+2}^k \Delta(l_{m};\vec h) (x+(c_{m} -c_{j+1})d)}^2 \ll_k \eps.
\end{multline*}
Substituting these two estimates into~\eqref{square-out} and expanding
\[
\Delta_{(c_{j+1} -c_1)h_1,\dots,(c_{j+1} -c_j) h_j} \nu(x) =(\Delta_{(c_{j+1} -c_1)h_1,\dots,(c_{j+1} -c_j) h_j} \nu(x)-1)+1,
\]
we conclude that the left-hand side of~\eqref{ind-nuem} is bounded by
\begin{align*}
\ll_{k}&\eps + \E_{\vec h\in [-N,N]^j}
N^{-1}\sum_{x\in \Z}  \Bigabs{\E_{d\in [N]} \prod_{m=j+2}^k \Delta(l_m;\vec h) (x+(c_{m} -c_{j+1})d)}^2.
\end{align*}

Finally, we use the standard identity (a van der Corput/Gowers--Cauchy--Schwarz step)
\[
\Bigabs{\E_{d\in [N]} A(d)}^2 \ll \E_{|h_{j+1}|\leq N}\E_{d\in [N]} A(d)\overline{A(d+h_{j+1})},
\]
valid for any bounded function $A$ supported on $[N]$ (with harmless boundary truncations). Applying this with
\[
A(d)\coloneqq \prod_{m=j+2}^k \Delta(l_m;\vec h) (x+(c_{m} -c_{j+1})d)
\]
gives
\begin{align*}
\ll_{k} & \eps+ \frac{1}{N}\sum_{x\in \Z}\E_{d\in [N]} \E_{\vec h\in[-N,N]^{j}}\E_{|h_{j+1}|\leq N} \prod_{m=j+2}^k\Delta_{(c_m-c_{j+1}) h_{j+1}}\Delta(l_m;\vec h) (x+(c_{m} -c_{j+1})d),
\end{align*}
which is exactly the right-hand side of~\eqref{ind-nuem}.
\end{proof}

We conclude this section with the existence of a pseudorandom majorant for the primes with quantitative error terms. 

\begin{lemma}[Pseudorandomness]\label{pseudo}
Let $N\geq 2$ and let $D\geq 2$. Let $0<c_0<\frac{1}{100}$ be a small constant, set $W=\prod_{p\leq (\log N)^{c_0}}p$, $1\leq b\leq W$ and $(b,W)=1$. Then 
there exists a function $\nu_D\colon\mathbb{Z}\to \R_{\geq 0}$ which satisfies the $(D,D,D)$-linear forms condition at scale $N$ with error $O_D((\log N)^{-c'})$ for some small constant $c'>0$ (with $c'<c_0$), and such that
\[
\tfrac{ \phi(W)}{W}\Lambda(W n+b) +1\ll_D \nu_D(n)
\]
whenever $n\in [\frac{N-b}{W}]$.
In particular, for any $k\geq 2$ and $D$ large enough in terms of $k$, we have
\[
\norm{\nu_D-1}_{U^{2k}[\frac{N-b}{W}]}^{2^{2k}} \ll_{k,D} (\log N)^{-c'}.
\]
\end{lemma}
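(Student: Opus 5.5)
The plan is to take $\nu_D$ to be the standard Goldston--Y{\i}ld{\i}r{\i}m-type truncated divisor sum majorant of Green--Tao, in the quantitative form used in \cite{TT}. Concretely, let $R=N^{\kappa_D}$ for a small $\kappa_D>0$ depending on $D$, fix a smooth cutoff $\chi\colon\R\to[0,1]$ supported on $[-1,1]$ with $\chi(0)=1$, and set
\[
\Lambda_{\chi,R}(n)=\log R\sum_{d\mid n}\mu(d)\chi\Bigl(\frac{\log d}{\log R}\Bigr),\qquad
\nu_D(n)=\frac{1}{2}+\frac{\phi(W)}{2W}\frac{\Lambda_{\chi,R}(Wn+b)^2}{c_\chi\log R}
\]
for $n\in[\frac{N-b}{W}]$, and $\nu_D(n)=1$ otherwise, with $c_\chi=\int_0^\infty\chi'(t)^2\,\rd t$. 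The averaging with $1$ is there to absorb the ``$+1$'' in the majorant bound.

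The majorant property is checked pointwise. If $Wn+b$ is a prime $p>R$, then the only divisor of $p$ contributing to $\Lambda_{\chi,R}(p)$ is $d=1$, so $\Lambda_{\chi,R}(p)=\log R$. Hence $\frac{\phi(W)}{W}\Lambda(p)\ll_D\frac{\phi(W)}{W}\log R\ll\nu_D(n)$, using $\log p\le\log N\ll_D\log R$. Prime powers and primes $\le R$ contribute nothing in the range $n\ge1$ with $W n+b>R$, apart from a negligible set. To handle that set cleanly, one can either restrict to $n$ with $Wn+b>R$, or replace $\Lambda$ by its restriction to primes (these differ by a sparse, harmless set). Alternatively, one can add $\frac{\phi(W)}{W}\Lambda(Wn+b)1_{Wn+b\le R}$ into $\nu_D$; that term is supported on $O(R)$ points of total mass $o(N/W)$, so it only perturbs linear forms averages by $O(N^{-c})$ errors.

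The main step is the linear forms condition. For a finite complexity system $\Psi=(\psi_1,\dots,\psi_{m'})$ with $m',d'\le D$ and coefficients bounded by $D$, expand $\prod_i\nu_D(\psi_i(\vec n))$ into $2^{m'}$ terms. The key input is the Goldston--Y{\i}ld{\i}r{\i}m type asymptotic of \cite[Theorem D.3]{GT-linear}, in the $W$-tricked form of \cite{TT}. It gives
\[
\E_{\vec n\in[N]^{d'}}\prod_{i\in S}\frac{\phi(W)}{W}\frac{\Lambda_{\chi,R}(W\psi_i(\vec n)+b)^2}{c_\chi\log R}=1+O_D\Bigl(\frac{1}{w^{1/2}}+\frac{1}{\log R}\Bigr)
\]
for each $S\subseteq[m']$, provided $R^{10D}\le N$. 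The $w^{-1/2}$ term arises because the local factors at primes $p>w$ equal $1+O_D(p^{-2})$, using the finite-complexity hypothesis; their product differs from $1$ by $O_D(1/w)$. Finite complexity also guarantees that the forms are pairwise non-proportional mod $p$ for $p>w\gg_D1$. Since $w=(\log N)^{c_0}$, the total error is $O_D((\log N)^{-c'})$ with, say, $c'=c_0/2<c_0$. Summing the $2^{m'}$ terms, each equal to $2^{-m'}(1+O((\log N)^{-c'}))$, gives $1+O_D((\log N)^{-c'})$. The extension by $1$ outside $[\frac{N-b}{W}]$ only changes boundary regions, which are handled as in \cite{GT-prime} by splitting $[N]^{d'}$ into boxes. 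I expect the main obstacle to be keeping every error term in \cite{GT-linear,TT} explicit in $w$ rather than $o(1)$, and in particular handling the local factors uniformly in $W$.

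For the final claim, expand $\norm{\nu_D-1}_{U^{2k}}^{2^{2k}}$ as an alternating sum of $2^{2^{2k}}$ averages of products of $\nu_D$ over subsets of the cube forms $x+\omega\cdot\vec h$. This is a finite-complexity system of $\le 2^{2k}$ forms in $2k+1$ variables with coefficients in $\{0,1\}$. Each average equals $1+O((\log N)^{-c'})$ once $D\ge2^{2k}+2k+1$, after normalising the interval average as in the definition of $U^{2k}[N]$. The main terms cancel because $\sum_S(-1)^{|S|}=0$, which leaves $O_{k,D}((\log N)^{-c'})$.
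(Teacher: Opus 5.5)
Your proposal follows essentially the same route as the paper. Both use the $W$-tricked Goldston--Y{\i}ld{\i}r{\i}m weight, majorise at large primes because only $d=1$ survives, and get the linear forms condition from the $W$-tricked asymptotic of \cite{TT} (going back to \cite{GT-linear}). The local factors there are $1+O_D(p^{-2})$ for $p>w$, costing $O_D(1/w)$, and a sparse exceptional set is treated separately.

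There is one loose end in that exceptional set. None of your three options establishes the stated majorisation at $Wn+b=p^j$ with $j\ge 2$ and $p\le R<p^j$. There $\Lambda_{\chi,R}(p^j)=\log R\,(1-\chi(\log p/\log R))$. Since $\chi$ is smooth with a maximum at $0$, this gives $\nu_D(n)=\frac12+O\bigl(\frac{\phi(W)}{W}\frac{(\log p)^4}{(\log R)^3}\bigr)$. That does not dominate $\frac{\phi(W)}{W}\Lambda(Wn+b)=\frac{\phi(W)}{W}\log p$ when, say, $\log p\asymp\sqrt{\log N}$. Restricting to $Wn+b>R$, or replacing $\Lambda$ by its restriction to primes, proves a weaker statement.

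The fix is easy. Keep your $1_{Wn+b\le R}$ term and, as the paper does, add a weight of size $\log N$ on the $O(N^{1/2})$ values of $n$ for which $Wn+b$ is a perfect power. Your sparsity argument applies unchanged, since the other factors are $N^{o(1)}$, so each linear forms average moves by only $O(N^{-1/2+o(1)})$.

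In the other direction, your choice $\nu_D=\frac12+\frac12(\cdots)$ improves on the paper's write-up. The paper asserts $1\ll_D\nu_D'(n)$ for the bare sieve weight $\frac{\phi(W)}{W}\Lambda_{\chi,R,2}(Wn+b)$. That weight vanishes at composites such as $Wn+b=pq$ with $p\le R$ and $q>R^2$, so the constant term is genuinely needed. The paper in effect reinstates it later through $\widetilde{\nu}_1=(\nu_1+1)/2$.
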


\begin{proof}
Let $\Lambda'$ be the restriction of $\Lambda$ to the primes. Since perfect powers form a very sparse set, it suffices to prove that for $Wn+b\in [N^{1/2},N]\cap \mathbb{N}$ we have
\[
\tfrac{ \phi(W)}{W}\Lambda'(W n+b) +1\ll_{D}\nu_D'(n)
\]
for some function $\nu_D'\colon \mathbb{Z}\to \mathbb{R}_{\geq 0}$ satisfying the $(D,D,D)$-linear forms condition at scale $N$ with error $O_D((\log N)^{-c'})$.
Indeed, setting
\[
\nu_D(n)\coloneqq \nu_D'(n)+(\log N)1_{n\ \textnormal{a perfect power}},
\]
the majorisation $\tfrac{\phi(W)}{W}\Lambda(Wn+b)+1\ll_D \nu_D(n)$ follows, and the additional term does not affect the linear forms condition: for any fixed system of bounded-complexity affine-linear forms $\Psi$, the contribution to
\[
\E_{\vec n\in [N]^{d'}}\prod_{i\in[m']}\nu_D(\psi_i(\vec n))
\]
from tuples for which at least one $\psi_i(\vec n)$ is a perfect power is $O_D((\log N)N^{-1/2})$, since the set of perfect powers up to $O(N)$ has size $O(N^{1/2})$ and the coefficients in $\Psi$ are bounded. This error is negligible compared with $(\log N)^{-c'}$.

Let
\[
\Lambda_{\chi,R,2}(n) =(\log R) \Bigbrac{\sum_{d\mid n} \mu(d) \chi \bigbrac{\tfrac{\log d}{\log R}}}^2,
\]
where $R=N^{\gamma_D}$ for some sufficiently small constant $0<\gamma_D<1/2$, and let $\chi\colon \R \to\R_{\geq 0}$ be any smooth function that is supported on $[-2,2]$ and equals $1$ on $[-1,1]$, and satisfies $\int_{\R} \chi'(x)^2\rd x=1$. Define
\[
\nu_D'(n) =\tfrac{\phi(W)}{W} \Lambda_{\chi,R,2}(Wn+b).
\]
Then $\nu_D'\geq 0$ pointwise.
Since $\gamma_D<1/2$ is small enough, for primes $Wn+b\ge N^{1/2}$ one has $\nu_D'(n)=\frac{\phi(W)}{W}\log R \gg_D \frac{\phi(W)}{W}\log(Wn+b)$, whence
\[
\tfrac{ \phi(W)}{W}\Lambda' (Wn+b) +1\ll_D \nu_D'(n)
\]
for $n\in [\frac{N-b}{W}]$.

It remains to verify the linear forms condition. Fix any $1\leq d,m\leq D$ and any finite complexity system $(\psi_1,\dots,\psi_{m})$ of affine-linear forms. For primes $p\nmid W$, define the local factor
\[
\beta_p =\E_{\vec n\in (\Z/p\Z)^d} \prod_{1\leq i\leq m}\frac{p}{p-1}1_{W\psi_i(\vec n)+b\not\equiv 0\!\!\!\pmod p},
\]
and for primes $p\mid W$ set $\beta_p\coloneqq 1$ (these primes are already absorbed into the $W$-trick, and in the argument below only the Euler factors with $p\nmid W$ play a role).
By the argument in \cite[Proposition 8.4]{TT} (applied with the $W$-tricked weight $\Lambda_{\chi,R,2}(W\cdot+b)$ and the above choice of local factors), one obtains an asymptotic of the form
\[
\E_{\vec n \in [N]^d} \prod_{i\in [m]} \frac{\phi(W)}{W} \Lambda_{\chi,R,2} (W \psi_i(\vec n) +b)
= \prod_{p}\beta_p + O_D((\log N)^{-c'})
\]
for some constant $c'>0$.
Moreover, for $p\nmid W$ the local factors satisfy $\beta_p=1+O_D(p^{-2})$ (using finite complexity and bounded coefficients to control collisions mod $p$), so the Euler product $\prod_p\beta_p$ converges and equals $1+O_D((\log N)^{-c'})$ after truncation at the relevant scales in \cite[Proposition 8.4]{TT}. This yields
\[
\E_{\vec n \in [N]^d} \prod_{i\in [m]} \frac{\phi(W)}{W} \Lambda_{\chi,R,2} (W \psi_i(\vec n) +b)= 1+O_D((\log N)^{-c'}),
\]
as required.
\end{proof}

\section{Transference principle and the sparse Green--Tao theorem}
\label{sec6}

The following lemma is originally due to Varnavides~\cite{Var}; the form stated here is taken from~\cite[Theorem 2.1]{RW}.

\begin{lemma}[Varnavides' bound]\label{1bd-counting}
Let $N\geq 2$ and let $k\geq 4$ be a natural number. Let $0<\delta<1/3$ be a parameter.
Suppose that $g\colon [kN] \to [0,1]$ is a function with average $\E_{n\in [N]} g(n) =\delta $.
Then
\[
\E_{n,d\in [N]} g(n) g(n+d)\cdots g(n+(k-1)d) \gg \exp\bigl(-\exp((\log(1/\delta))^{C_k})\bigr)
\]
for some large constant $C_k>0$. Furthermore, for $k=4$ we have the stronger bound
\[
\E_{n,d\in [N]} g(n) g(n+d)  g(n+2d) g(n+3d) \gg \exp(-\delta^{-C_4}).
\]
\end{lemma}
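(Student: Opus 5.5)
The plan is to deduce Lemma~\ref{1bd-counting} from the best known quantitative Szemer\'edi bounds~\eqref{eq:sz} via the standard Varnavides averaging argument. Let $r_k(M)$ denote the largest size of a subset of $[M]$ with no nontrivial $k$-AP. By~\eqref{eq:sz} we have $r_k(M)\leq \tfrac{\delta}{4}M$ as soon as $M\geq M_0(\delta)$, where
\[
M_0(\delta)=\exp\exp\bigl((\log(1/\delta))^{O_k(1)}\bigr)\ \text{ for } k\geq 5, \qquad M_0(\delta)=\exp(\delta^{-O(1)})\ \text{ for } k=4.
\]
The same bound holds for $k=4$ in the case $k\geq 5$, since it is weaker. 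The target lower bound will then come out as $\gg M_0^{-2}$, which matches the two claimed bounds after adjusting $C_k$.

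\textbf{Step 1 (a dense set).} First I reduce from the function $g$ to a set. Let $A=\{n\in[N]\colon g(n)\geq \delta/2\}$. Since $0\leq g\leq 1$ and $\E_{n\in[N]}g(n)=\delta$, we get $|A|\geq \delta N/2$. On $A$ the product of $k$ values of $g$ is at least $(\delta/2)^k$. It therefore suffices to show that $A$ contains $\gg M_0^{-2}N^2$ progressions $n,n+d,\dots,n+(k-1)d$ with $n,d\in[N]$. The factor $(\delta/2)^k$ is then absorbed, because $M_0$ is much larger than any power of $1/\delta$. If $N<M_0$, I instead use the trivial progressions with $d$ small, or I simply note that the bound is trivial once one accepts an $\ll$ constant. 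To be safe, I would handle the case $N\leq M_0$ by counting all pairs $n\in A$, $d\in[N]$ for which the progression lies in $A$. A cleaner route is to also allow degenerate $d$, but since $d\in[N]$ forbids $d=0$, I will instead require $N\geq M_0$ and handle small $N$ by the same averaging applied with $M=N$ when needed.

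\textbf{Step 2 (averaging over sub-progressions).} Take $M=M_0$ and consider all progressions $P_{a,r}=\{a+rj\colon j\in[M]\}$ with $r\in[N/M]$ and $a\in[N]$ such that $P_{a,r}\subseteq[N]$. There are $\gg N^2/M$ such pairs $(a,r)$. A double count shows that the average of $|A\cap P_{a,r}|/M$ over these pairs is $\geq \delta/2-O(M\cdot r/N)$. Restricting to $r\leq N/M^2$, say, makes this error small. Hence at least a $\delta/4$-proportion of the pairs $(a,r)$ satisfy $|A\cap P_{a,r}|\geq \delta M/4$. For each such pair, the definition of $M_0$ gives a nontrivial $k$-AP inside $A\cap P_{a,r}$, whose common difference has the form $rd'$ with $d'\leq M$. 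This yields $\gg \delta N^2/M^3$ pairs $(a,r)$, each carrying a $k$-AP $(n,n+d,\dots)$ with $d\leq N$.

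\textbf{Step 3 (the main obstacle: overcounting).} The key step is to bound the multiplicity. A given progression with first term $n$ and difference $d$ arises from the pair $(a,r)$ only if $r\mid d$ with $d/r\leq M$, and $a$ is one of at most $M$ positions before $n$. So each progression is counted at most $M^2$ times. The number of distinct $k$-APs in $A$ is therefore $\gg \delta N^2/M^5$. Dividing by $N^2$ and multiplying by $(\delta/2)^k$ gives a lower bound of $\gg \delta^{k+1}M_0^{-5}$. This equals $\exp(-\exp((\log 1/\delta)^{C_k}))$ for $k\geq5$ and $\exp(-\delta^{-C_4})$ for $k=4$. The progressions stay inside $[kN]$, where $g$ is defined. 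The only delicate point is keeping track of the ranges $r\leq N/M^2$ and the boundary. This is routine, and it is exactly the argument recorded in~\cite[Theorem 2.1]{RW}, which I would cite for the details.
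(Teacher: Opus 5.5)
Your Steps 1--3 are correct and follow essentially the paper's route. The paper makes the same reduction to $A=\{n\in[N]\colon g(n)\geq\delta/2\}$ and pays the same factor $(\delta/2)^k$. It then simply cites Shkredov's quantitative Varnavides theorem, which gives a count of $\gg\delta^2/N_k(\delta/4)^3$, and inserts the bounds \eqref{eq:sz}. You instead reprove that count by averaging over length-$M_0$ subprogressions and bounding the multiplicity by $M_0^2$. This gives $\delta M_0^{-O(1)}$, hence the same final bounds after adjusting $C_k$. One piece of bookkeeping: with $r\leq N/M^2$ you have $\asymp N^2/M^2$ pairs, so you need $N\geq M_0^2$. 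Alternatively, restrict to $r\leq\delta N/(8M)$, which only needs $N\gg M_0/\delta$.

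The part that does not work is your treatment of small $N$. None of your suggested remedies can succeed: trivial progressions, absorbing into the implied constant, or rerunning the averaging with $M=N$. The reason is that for $N$ below the Szemer\'edi threshold the inequality is false as stated. Take $A'\subseteq[N]$ free of nontrivial $k$-APs with $|A'|\geq\delta N$. Put $g=(\delta N/|A'|)1_{A'}$ on $[N]$ and $g=0$ on $(N,kN]$. Since $d\geq 1$ there are no degenerate progressions, so the left-hand side is $0$. A concrete instance is $k=4$, $N=4$, $A'=\{1,2,3\}$, $\delta=1/4$.

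So drop that paragraph and add the hypothesis $N\geq N_0(k,\delta)$, for example $N\geq M_0(\delta)^2$. The paper's proof tacitly needs the same hypothesis, since it only reduces to $N$ large in terms of $k$. The hypothesis is harmless where the lemma is used: in Theorem~\ref{density-bd-2} it is applied at scale $(N-b)/W$ with $\delta\geq\exp(-(\log\log N)^{c'})$, which is far above $M_0(\delta/4)$.
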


\begin{proof} We may assume that $N$ is large enough in terms of $k$ by adjusting the constants $C_k$ in the statement.

Let  $A=\{n\in [N]\colon g(n)\geq \delta/2\}$. Then $|A|\geq (\delta/2)N$. Now, since 
\[
\E_{n,d\in [N]} g(n) g(n+d)\cdots g(n+(k-1)d)\geq \left(\frac{\delta}{2}\right)^k \E_{n,d\in [N]} 1_A(n) 1_A(n+d)\cdots 1_A(n+(k-1)d),
\]
from~\cite[Theorem 18 and Remark 1]{shkredov} it follows that we have the claim of the lemma if the right-hand side of the bound is replaced by $(\delta/2)^k\delta^2/N_k(\delta/4)^3$, where $N_k(\rho)$ is the smallest positive integer such that for every integer $M\geq N_k(\rho)$, any subset of $[M]$ of size at least $\rho M$ contains a nontrivial $k$-term arithmetic progression. From the Szemer\'edi bounds given in~\eqref{eq:sz}, for some constants $C_k'>0$ and all $\rho\in (0,1/2)$, we have
\begin{align*}
 N_k(\rho)\ll \begin{cases} \exp(\rho^{-C_4'}),\quad &k=4\\
     \exp(\exp((\log(1/\rho))^{C_k'})),\quad &k\geq 5,
 \end{cases} 
\end{align*}
 and the claim follows on adjusting the constants.
\end{proof}

\begin{theorem}[Counting $k$-APs in subsets of the primes]\label{density-bd-2}
Let $k\geq 4$ be a natural number. Let $N\geq 10$, and let $0<\delta<1/3$ satisfy $\delta\geq \exp(-(\log \log N)^{c'})$ for a sufficiently small constant $c'>0$. Let $0<c_0<1/100$ be a constant, let $W=\prod_{p\leq (\log N)^{c_0}}p$, $1\leq b\leq W$ and $(b,W)=1$. Suppose that  $f\colon \Z\to\R_{\geq 0}$ is supported on the interval $[\frac{N-b}{W}]$ and satisfies $\E_{n\in [\frac{N-b}{W}]}f(n)\geq\delta$, and additionally
\[
0\leq f\leq \frac{\phi(W)}{W}\Lambda(W \cdot +b)
\]
Then there exist constants $c>0$ and $c_k>0$, such that
\begin{multline*}
\frac{W}{N-b}\sum_{n\in \Z} \E_{d \in [\frac{N-b}{W}]} f(n)\cdots f(n+(k-1)d)\\
\geq c \exp\bigl(-\exp((\log(1/\delta))^{C_k})\bigr) -O_k\bigbrac{\exp (-(\log \log N)^{c_k})}.
\end{multline*}
Furthermore, for $k=4$ we have the stronger bound
\begin{align*}
&\frac{W}{N-b}\sum_{n\in \Z} \E_{d \in [\frac{N-b}{W}]} f(n)f(n+d)f(n+2d)f(n+3d)
\geq c \exp(-\delta^{-C_4})\\
&\quad - O\bigbrac{\exp (-(\log \log N)^{c_4})}.
\end{align*}
\end{theorem}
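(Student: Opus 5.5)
We follow the transference scheme sketched in the introduction. Write $N'=\lfloor (N-b)/W\rfloor$, so $f$ lives on $[N']$. We set up two majorants. The first, $\nu_1$, is the function $\nu_D$ of Lemma~\ref{pseudo}, normalised by a constant so that $f\le \nu_1$ (up to the implied constant, which we absorb by replacing $f$ with $f/C_D$ and $\delta$ with $\delta/C_D$). For $D$ large in terms of $k$, it satisfies the $(D_k,D_k,D_k)$-linear forms condition with error $(\log N)^{-c'}$, and hence $\|\nu_1-1\|_{U^{2k}[N']}\le \varepsilon:=(\log N)^{-c''}$ for some small $c''>0$. The second is $\nu_2=\frac{\phi(W)}{W}\Lambda(W\cdot+b)$, again up to a constant. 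It satisfies $\|\nu_2\|_\infty\ll \log N$.

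We then apply Proposition~\ref{tra-wea-2} with this $\varepsilon$. Condition (2) holds because $\exp\exp((\log(1/\varepsilon))^{\gamma_k^2})=\exp\exp((c''\log\log N)^{\gamma_k^2})$ is far larger than $\log N$. Condition (3) is the crux, and it follows from Lemma~\ref{cor-mangoldt}. The range $\eta\ge \exp(-\exp((\log(1/\varepsilon))^{1/3}))$ means $\log(1/\eta)\le \exp((c''\log\log N)^{1/3})$, which is $(\log N)^{o(1)}$. So nilsequences of dimension $(\log 1/\eta)^3$ and complexity $\exp((\log 1/\eta)^3)$ lie within the range $M,d\le \exp((\log N)^{c_k})$ allowed there. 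The error $N\exp(-(\log N)^{c_k})/W$ is $\le \eta^2 N'$, and the implied constant can be absorbed into the normalisation of $\nu_2$.

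\textbf{Main obstacle.} The step I expect to need the most care is checking that the partition produced by Lemma~\ref{cor-mangoldt} has the form demanded in (3): progressions of common difference at most $\exp((\log 1/\eta)^{C_k})$. This requires matching the parameter $\eta$ fed into Proposition~\ref{cor-1} (via the constraint $(\log 1/\eta)^{200C_k^{10^k}}\le\log N$) with the $\eta$ of condition (3). If the differences turn out larger, I would first try applying Proposition~\ref{cor-1} directly with the given $\eta$, rather than through Lemma~\ref{cor-mangoldt}, and tracking its output.

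With the hypotheses verified, Proposition~\ref{tra-wea-2} yields $g\colon[N']\to[0,2]$ with $\|f-g\|_{U^k[N']}\ll_k \exp(-(\log(1/\varepsilon))^{\gamma_k})=\exp(-(c''\log\log N)^{\gamma_k})$, together with $g=\Pi_\CB f$ on a set $\Omega$ whose complement carries $f$-mass at most $\varepsilon$. Since $\E_{\Omega}\Pi_\CB f=\E_\Omega f$ ($\Omega$ being a union of atoms), we get $\E_{[N']} g\ge\delta-\varepsilon\ge\delta/2$. Next, apply the generalised von Neumann theorem (Lemma~\ref{neumann}) with $c_i=i-1$ to $f_i=f$ and $g_i=g$. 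Both are $\nu$-bounded for $\nu=(\nu_1+2)/3$ after rescaling, and this $\nu$ still satisfies the linear forms condition with error $O((\log N)^{-c'})$. The result is that the $k$-AP count of $f$ equals that of $g$ up to $O(\varepsilon^{\gamma_k}+\|f-g\|_{U^{k-1}[(k-1)N']})$. The Gowers norm on the slightly larger interval is controlled by $\|f-g\|_{U^k[N']}$ by monotonicity and the change of normalisation.

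Finally, apply Lemma~\ref{1bd-counting} to $g/2$, which has average $\ge\delta/4$. This gives a lower bound $\gg\exp(-\exp((\log(4/\delta))^{C_k}))$ for its $k$-AP count, and the stronger bound $\exp(-\delta^{-C_4})$ when $k=4$. Rescaling by $2^k$ and adjusting constants gives the main term. All error terms are $\exp(-(\log\log N)^{c_k})$, and the hypothesis $\delta\ge\exp(-(\log\log N)^{c'})$ is only used to ensure $\varepsilon\le\delta/2$.
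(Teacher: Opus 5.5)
Your proposal is correct and follows essentially the same route as the paper: $\nu_1$ from Lemma~\ref{pseudo}, $\nu_2=\frac{\phi(W)}{W}\Lambda(W\cdot+b)$ with the correlation hypothesis supplied by Section~\ref{sec4}, the dense model Proposition~\ref{tra-wea-2}, the von Neumann Lemma~\ref{neumann}, and Varnavides' bound (Lemma~\ref{1bd-counting}) applied to $g/2$; your explicit absorption of the majorant constants and your direct comparison of $f$ with $g$ (rather than $f1_\Omega$ with $g1_\Omega$) are only bookkeeping differences. The obstacle you flag is harmless: the common-difference bound in hypothesis (3) is never used, because in the proof of Proposition~\ref{tra-wea-2} that hypothesis only feeds condition (iii) of Lemma~\ref{lem-cor}, and that condition accepts an arbitrary partition of $[N]$.
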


\begin{proof}
First, we restrict the support of $f$ to primes because prime powers form a sparse set; thus, we can assume that
\[
f\leq \frac{\phi(W)}{W}\Lambda'(W \cdot +b)
\]
and $\E_{n\in [\frac{N-b}{W}]} f(n)\geq \delta/2$. Let $D_k$ be large enough in terms of $k$. Lemma~\ref{pseudo} then gives us a function $\nu_1\colon \mathbb{Z}\to\R_{\geq 0}$ such that $f\leq \nu_1$ pointwise and such that $\nu_1$ satisfies the $(D_k,D_k,D_k)$-linear forms condition at scale $(N-b)/W$ with error $O_k((\log N)^{-c_k'})$ for some $c_k'>0$ depending on $D_k$. In particular, we have
\begin{align}\label{pseudo-nu}
\norm{\nu_1 -1}_{U^{2k}[\frac{N-b}{W}]} \ll_k (\log N)^{-\gamma_k}
\end{align}
for some $\gamma_k>0$.

Now set $\eps =(\log N)^{-\gamma_k}$. It follows from~\eqref{pseudo-nu} that the first hypothesis of Proposition~\ref{tra-wea-2} holds with this majorant $\nu_1$. 
Next set $\nu_2=\frac{\phi(W)}{W}\Lambda'(W \cdot +b)1_{[(N-b)/W]}$, then $\nu_2$ is also a majorant of $f$, and moreover $\|\frac{\phi(W)}{W}\Lambda'(W \cdot +b)1_{[(N-b)/W]}\|_\infty\ll \log N$. Thus the second hypothesis of Proposition~\ref{tra-wea-2} holds as well, and it remains only to verify the third. Set
\[
\eta =\exp\bigbrac{-\exp((\log(1/\eps))^{1/2})}.
\]
One readily checks that this choice of $\eta$ satisfies
\[
1/\eta \geq \log N \qquad \text{ and } \qquad \log(1/\eta) \leq \exp((\log\log N)^{1/2})\leq (\log N)^c
\]
for any  small constant $c>0$.
Therefore, Proposition~\ref{cor-1} implies that the third hypothesis of Proposition~\ref{tra-wea-2} also holds. Applying Proposition~\ref{tra-wea-2}, we obtain a small constant $0<c_k<1/5$, a bounded function $g\colon [\frac{N-b}{W}]\to[0,2]$ and a set $\Omega \subset[\frac{N-b}{W}]$. We then extend $g$ to $\mathbb{Z}$ by setting it equal to zero outside $\left[\frac{N-b}{W}\right]$. Then the function $g$ satisfies the following properties:
\begin{enumerate}
\item
\begin{align}\label{uni-bd}
\norm{f-g}_{U^k[\frac{N-b}{W}]} \ll_k \exp(-(\log \log N)^{c_k});
\end{align}
\item and
\begin{align*}
\bigabs{\E_{n\in [\frac{N-b}{W}]} (f+1)1_{[\frac{N-b}{W}]\backslash\Omega}(n)}\leq (\log N)^{-\gamma_k}.
\end{align*}
\end{enumerate}

Combining this with $\E_{n\in [\frac{N-b}{W}]}f(n)\geq\delta/2$ we deduce
\begin{align} \label{density-bd}
\E_{n\in [\frac{N-b}{W}] } g(n) 1_\Omega (n)
&\geq \E_{n\in [\frac{N-b}{W}]} f(n) 1_\Omega (n) -\bigabs{\E_{n\in [\frac{N-b}{W}]} (f-g)(n) 1_\Omega (n)} \nonumber\\
&\geq \E_{n\in [\frac{N-b}{W}] }f(n) - \bigabs{\E_{n\in [\frac{N-b}{W}]} f(n) 1_{[\frac{N-b}{W}]\backslash\Omega} (n)}\\
&\quad - O_k\bigbrac{\exp (-(\log \log N)^{c_k})} \nonumber\\
&\geq \delta/2- O_k\bigbrac{\exp (-(\log \log N)^{c_k})}.
\end{align}
Denoting $\widetilde{\nu}_1=\tfrac{\nu_1+1}{2}$, we have $f,g\ll \widetilde{\nu}_1$ pointwise, and $\widetilde{\nu}_1$ satisfies the $(D_k,D_k,D_k)$-linear forms condition at scale $N$ with error $O_k((\log N)^{-c_k'})$. Hence, Lemma~\ref{neumann} together with the uniformity bound~\eqref{uni-bd} gives
\begin{align}\label{6.3}
\begin{split}
&\quad\frac{W}{N-b}\sum_{n\in \Z} \E_{d \in [\frac{N-b}{W}]} f(n)\cdots f(n+(k-1)d)\\
&\geq \frac{W}{N-b}\sum_{n\in \Z} \E_{d \in [\frac{N-b}{W}]} f\cdot 1_\Omega(n)\cdots f\cdot 1_\Omega(n+(k-1)d)\\
&= \frac{W}{N-b}\sum_{n\in \Z} \E_{d \in [\frac{N-b}{W}]} g\cdot 1_\Omega (n)\cdots g\cdot 1_\Omega (n+(k-1)d)
+O_k((\log N)^{-\gamma_k}) \\
&\quad + O_k\bigbrac{\norm{(f-g) \cdot1_\Omega}_{U^{k-1}[\frac{N-b}{W}]}}\\
&= \frac{W}{N-b}\sum_{n\in \Z} \E_{d \in [\frac{N-b}{W}]} g\cdot 1_\Omega (n)\cdots g\cdot 1_\Omega (n+(k-1)d)
+O_k\bigbrac{\exp\bigbrac{-(\log \log N)^{c_k}}},
\end{split}
\end{align}
where for the last step we used properties (1) and (2).

Set $M\coloneqq \frac{N-b}{W}$ and define $h\colon [kM]\to[0,1]$ by
\[
h(n)\coloneqq
\begin{cases}
g(n)1_\Omega(n)/2, & 1\leq n\leq M,\\
0, & M<n\leq kM.
\end{cases}
\]
By~\eqref{density-bd} and the assumption $\delta\geq \exp(-(\log\log N)^{c'})$, we may assume (for $N$ sufficiently large depending on $k$) that $\E_{n\in [M]} h(n)\geq \delta/4$. Applying Lemma~\ref{1bd-counting} (with $N$ replaced by $M$ and with $g$ replaced by $h$), we obtain
\begin{align*}
\frac{W}{N-b}\sum_{n\in \Z} \E_{d \in [M]} g\cdot 1_{\Omega}(n)\cdots g\cdot 1_{\Omega}(n+(k-1)d)
&\gg_k \E_{n,d\in [M]} h(n)\cdots h(n+(k-1)d)\\
&\gg_k \exp\bigl(-\exp((\log(1/\delta))^{C_k})\bigr),
\end{align*}
and in the case $k=4$,
\[
\frac{W}{N-b}\sum_{n\in \Z} \E_{d \in [M]} g\cdot 1_{\Omega}(n) g\cdot 1_{\Omega}(n+d) g\cdot 1_{\Omega}(n+2d) g\cdot 1_{\Omega}(n+3d)
\gg \exp(-\delta^{-C_4}).
\]
Inserting these estimates into~\eqref{6.3} completes the proof.
\end{proof}

\begin{proof}[Proof of Theorem~\ref{density-bound}]
If $N$ is large enough, we have
\[
\E_{n\in [N]} 1_\mathcal A(n) \Lambda (n) \geq \delta/3,
\]
since $\log n\geq (\log N)/2$ for $n\in [N^{1/2},N]$.
Setting $W=\prod_{p\leq (\log N)^{c_0}}p$, by the pigeonhole principle there exists some integer $1\leq b\leq W$ with $(b,W)=1$ such that
\[
\sum_{\substack{n\leq N \\ n\equiv b \!\!\!\pmod W }}1_\mathcal A(n) \Lambda (n) \geq \frac{\delta N}{4\phi(W)}.
\]
Fix this residue class $b$, and define $f(n) =\frac{\phi(W)}{W}\, 1_\mathcal A(Wn+b)\, \Lambda (Wn+b)$. Since $\mathcal{A}\subseteq [N]$, we have $\supp(f) \subseteq[\frac{N-b}{W}]$ and $\E_{n\in [\frac{N-b}{W}]}f(n) \geq \delta/3$. Thus, from Theorem~\ref{density-bd-2} we see that when $k\geq 5$ and $\delta\gg \exp (-(\log \log\log N)^{c_k})$ we have
\[
\frac{W}{N-b}\sum_{n\in \Z} \E_{d \in [\frac{N-b}{W}]} f(n)\cdots f(n+(k-1)d) \gg_k \exp\bigl(-\exp((\log(1/\delta))^{C_k})\bigr);
\]
while when $k=4$ and $\delta\gg (\log \log N)^{-c}$ we have
\[
\frac{W}{N-b}\sum_{n\in \Z} \E_{d \in [\frac{N-b}{W}]} f(n)f(n+d)f(n+2d)f(n+3d) \gg \exp(-\delta^{-C_4}).
\]
This completes the proof of Theorem~\ref{density-bound}.
\end{proof}

\appendix
\section{Type I and Type II Estimates for Nilsequences}

The ideas in this appendix are closely related to \cite[Section 5]{Leng}. However, we require Type I and Type II estimates for nilsequences along sparse arithmetic progressions, in which the common difference is larger than the complexity of the nilsequence. Since we have not been able to locate such estimates in the literature, we provide a detailed treatment here. 

\begin{lemma}\label{type-i-ii}
Let $A$ be a large absolute constant. Let $0<\delta<1$ be a parameter and let $1\leq b\leq W$ be integers with $(b,W)=1$. Suppose that $\psi\colon \Z\to\C$ is a 1-bounded function and that
\[
\Bigabs{\twosum{n\in [N]}{n\equiv b \mod W} h(n) \psi(\frac{n-b}{W})}\geq \frac{\delta N}{W}.
\]	
\begin{enumerate}
	\item If $h$ is a Type I sum of level $N^{2/3}$ at scale $N$, then there exists  $L\leq N^{2/3}$ such that for at least $\delta(\log N)^{-A} L$ elements $l\in [L/2,L]$ with $(l,W)=1$  we have
	\[
	\Bigabs{\twosum{m\leq N/L}{m\equiv b l^{-1}\mod W} \psi(\frac{ml-b}{W})} \geq \frac{\delta N}{WL}(\log N)^{-A},
	\]
	where $l^{-1}$ denotes the inverse of $l$ modulo $W$.
	\item If $h$ is a Type II sum of range $[N^{1/3}, N^{2/3}]$ at scale $N$, then there exist $N^{1/3} \leq L\leq N^{2/3}$ and $M\in [\delta N/L, N/L]$ such that
	\[
	\frac{\delta^{O(1)} N^2}{W^3}(\log N)^{-A} \ll  \sum_{r \mod W}\nolimits^{*} \threesum{M<m_i\leq 2 M}{m_i\equiv b r^{-1} \mod W}{i=1,2} \biggabs{\twosum{L<l\leq 2 L}{l \equiv r \mod W} \psi(\frac{m_1l-b}{W})\overline{\psi(\frac{m_2l-b}{W})}}^2.
		\]
\end{enumerate}
\end{lemma}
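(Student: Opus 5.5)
The plan is to follow the standard Type I/II decomposition argument: dyadically decompose the divisor variable, use pigeonhole to locate a single scale, and then either sum trivially over the outer variable (Type I) or apply Cauchy--Schwarz to remove the unknown coefficients (Type II). The one new feature is the congruence $n\equiv b \mod W$. Since $(b,W)=1$, every factorisation $n=lm$ with $n\equiv b\mod W$ has $(l,W)=1$ and $m\equiv b l^{-1}\mod W$. This lets us rewrite the condition on $n$ as a condition on $m$ depending only on the residue of $l$.

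\textbf{Type I.} Write $h(n)=\sum_{l\mid n}a(l)$ with $a$ supported on $[1,N^{2/3}]$ and divisor-bounded. Then
\[
\sum_{n\equiv b} h(n)\psi\bigl(\tfrac{n-b}{W}\bigr)=\sum_{l\le N^{2/3},\,(l,W)=1} a(l)\sum_{m\le N/l,\ m\equiv bl^{-1}} \psi\bigl(\tfrac{ml-b}{W}\bigr).
\]
Split $l$ into $O(\log N)$ dyadic ranges $[L/2,L]$ and pigeonhole one of them, losing a factor $\log N$. Replacing $m\le N/l$ by $m\le N/L$ changes the sum by an interval endpoint; I would handle this either by a further subdivision of $l$ into $(\log N)^{O(1)}$ short ranges on which $N/l$ is essentially constant, or by absorbing it into the statement at the cost of adjusting constants.

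To control the coefficients, apply Cauchy--Schwarz in $l$. Using $\sum_{l\le L}|a(l)|^2\ll L(\log N)^{O(1)}$, which holds because $a$ is divisor-bounded, the inner sums $S_l$ satisfy $\sum_l |S_l|^2\gg \delta^2 N^2 (\log N)^{-O(1)}/(W^2 L)$. Since each $|S_l|\le N/(WL)+1$, a Paley--Zygmund or dyadic-counting argument shows that at least $\delta(\log N)^{-A}L$ values of $l$ have $|S_l|\ge \delta N(\log N)^{-A}/(WL)$. The requirement $L\le N^{2/3}$ ensures $N/(WL)\ge 1$ whenever $W$ is small, so the trivial bound on $|S_l|$ is of the right order.

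\textbf{Type II.} Write $h(n)=\sum_{n=ml}a(l)b(m)$ with $a$ supported on $[U,2U]$, and set $L=U\in[N^{1/3},N^{2/3}]$. Dyadically decompose $m$ into ranges $(M,2M]$ and pigeonhole one, discarding the ranges with $M<\delta N/L$ since their trivial contribution is $\ll \delta N(\log N)^{O(1)}\cdot(\text{small})/W$. Next group $l$ by its residue $r\mod W$ with $(r,W)=1$; the constraint then becomes $m\equiv br^{-1}$. The expression is
\[
\sum_r^{*}\sum_{m\equiv br^{-1}} b(m)\sum_{l\equiv r} a(l)\psi\bigl(\tfrac{ml-b}{W}\bigr),
\]
with the restriction $ml\le N$ removed, again at a logarithmic cost, by Perron's formula or by smoothing. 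Apply Cauchy--Schwarz over the pair $(r,m)$ to remove $b(m)$, using $\sum_m |b(m)|^2\ll (M/W)(\log N)^{O(1)}$ per residue class. This yields
\[
\sum_r\sum_m\Bigl|\sum_{l\equiv r} a(l)\psi(\cdot)\Bigr|^2.
\]
Expanding the square gives a sum over $l_1,l_2$. Swap the order so that $m$ is inside, and apply Cauchy--Schwarz again over $(l_1,l_2)$ to remove $a(l_1)\overline{a(l_2)}$. By the symmetry between $l$ and $m$, this becomes equivalently the stated form, namely a sum over $m_1,m_2$ of $\bigl|\sum_l \psi\,\overline{\psi}\bigr|^2$; it is cleanest to perform the first Cauchy--Schwarz in $l$ and the second in $(m_1,m_2)$. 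Counting the powers of $W$ gives the normalisation $N^2/W^3$: a factor $W^{-1}$ from each congruence class and a factor $\phi(W)$ from summing over $r$.

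\textbf{Main obstacle.} The main difficulty is bookkeeping rather than any single step. We must remove the hard cutoff $ml\le N$ without losing more than $(\log N)^{O(1)}$, and we must track the factors of $W$ and $\phi(W)$ correctly so that the lower bound $\delta^{O(1)}N^2W^{-3}$ comes out with the correct power, given that the moduli are large relative to $L$ and $M$.
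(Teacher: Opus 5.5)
For part (2), your final ordering is exactly the paper's proof: Cauchy--Schwarz over $(r,l)$ to remove $a$ using $\sum_{l\sim L}|a(l)|^2\ll L(\log N)^{O(1)}$, expand the square in $m$, then Cauchy--Schwarz over $(r,m_1,m_2)$ with Shiu's bound $\sum_{m\equiv r'}|b(m)|^2\ll M(\log N)^{O(1)}/\phi(W)$. Your Perron treatment of $ml\le N$ is more careful than the paper, which passes to $L<l\le 2L$, $M<m\le 2M$ without comment; the twists $l^{-it},m^{-it}$ go into $a,b$ and disappear under Cauchy--Schwarz. Drop the other ordering you mention first. The stated form has the inner sum over the variable at scale $L\in[N^{1/3},N^{2/3}]$ and the outer sums over $M\in[\delta N/L,N/L]$, and these roles are not symmetric.

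Part (1) is only called standard in the paper and is not proved there. Your sketch of it has two real quantitative problems.

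\textbf{The truncation $m\le N/l\to m\le N/L$.} This is not an endpoint effect. For $l$ near $L/2$ the discarded range $N/L<m\le N/l$ contains about $N/(WL)$ terms of the progression, as many as the retained sum. If you split $[L/2,L]$ into pieces $(L',L'(1+\epsilon)]$, the error is $O(\epsilon N/(WL)+1)$. That is acceptable only if $\epsilon\ll\delta(\log N)^{-A}$, so you need about $\delta^{-1}(\log N)^{O(1)}$ pieces, not $(\log N)^{O(1)}$. After pigeonholing one piece and taking the statement's $L$ to be its right endpoint, every $l$ you can certify lies in a set of size $\epsilon L$. That already costs a factor $\delta$ in the count. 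Perron does not rescue this, because here the twist $m^{-it}$ lands on $\psi$ rather than on a coefficient.

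\textbf{The counting step.} This loses another $\delta$. Set $X=N/(WL)$; you have $\sum_l|S_l|^2\gg\delta^2X^2L(\log N)^{-O(1)}$ and $|S_l|\le X+1$. Let $G$ be the set of $l$ with $|S_l|\ge\delta(\log N)^{-A}X$. Then $\sum_l|S_l|^2\le|G|(X+1)^2+\delta^2(\log N)^{-2A}X^2L$, which gives only $|G|\gg\delta^2(\log N)^{-O(1)}L$, not $\delta(\log N)^{-A}L$. Since $a$ is merely divisor-bounded, higher moments improve this at best to $\delta^{q/(q-1)}$. Discarding $l$ with $d(l)>(\log N)^B$ works only when $\delta\ge(\log N)^{-O(1)}$, which is not the regime of Proposition~\ref{cor-1}.

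So your outline actually proves part (1) with $\gg\delta^{O(1)}(\log N)^{-O(1)}L$ good $l$ (about $\delta^3$), each with $|S_l|\ge\delta(\log N)^{-A}N/(WL)$. That is all Lemma~\ref{type-i} uses, since its hypothesis asks only for $\gg\delta^{O(1)}L$ such $l$. You should state and prove that weaker form rather than claim the exponent $1$ on $\delta$.
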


\begin{proof}
We only prove the second statement as the first is standard and simpler. Since $h$ is a Type II sum of range $[N^{1/3},N^{2/3}]$, we may write
\[
\frac{\delta N}{W} \leq \biggabs{\threesum{r,r'\mod W}{rr'\equiv b\mod W}{(rr',W)=1} \twosum{N^{1/3}\leq l\leq N^{2/3}}{l\equiv r \mod W} a_l \twosum{m\leq N/l}{m\equiv r'\mod W} b_m\psi(\frac{ml-b}{W})},
\]
where $a_l$ and $b_m$ are divisor-bounded functions.  Decomposing $[N^{1/3}, N^{2/3}]$ into dyadic intervals, it follows from the pigeonhole principle that there exist some $N^{1/3} \leq L\leq N^{2/3}$ and $\delta N\leq ML\leq N$ such that
\[
\frac{\delta^2 N}{W}(\log N)^{-2} \ll \threesum{r,r'\mod W}{rr'\equiv b\mod W}{(rr',W)=1} \twosum{L<l\leq 2 L}{l\equiv r \mod W} |a_l| \biggabs{\twosum{M<m\leq 2 M}{m\equiv r'\mod W} b_m\psi(\frac{ml-b}{W})}.
\]
After Cauchy-Schwarz inequality, we find
\begin{multline*}
	\frac{\delta^{4} N^2}{W^2}(\log N)^{-4} \ll \threesum{r,r'\mod W}{rr'\equiv b\mod W}{(rr',W)=1} \twosum{L<l\leq 2 L}{l\equiv r\mod W}|a_l|^2\\
	\times \threesum{r,r'\mod W}{rr'\equiv b\mod W}{(rr',W)=1} \twosum{L<l\leq 2 L}{l\equiv r\mod W}\biggabs{\twosum{M<m\leq 2 M}{m\equiv r'\mod W} b_m\psi(\frac{ml-b}{W})}^2.
\end{multline*}

Using standard average bounds for the divisor function and noting that for any $r$ coprime to $W$ there is exactly one $r'\mod W$ such that $rr'\equiv b\mod W$, we obtain
\[
	\frac{\delta^{4} N^2}{LW^2}(\log N)^{-O(1)}\ll \threesum{r,r'\mod W}{rr'\equiv b\mod W}{(rr',W)=1} \twosum{L<l\leq 2 L}{l\equiv r\mod W}\twosum{M<m_1,m_2\leq 2 M}{m_i\equiv r'\mod W} b_{m_1}\bar{b_{m_2}} \psi(\frac{m_1l-b}{W}) \overline{\psi(\frac{m_2l-b}{W})}.
\]
Since $b_{m_i}$ is divisor-bounded, Shiu's bound gives
\[
\twosum{M<m_i\leq 2 M}{m_i\equiv r' \mod W}|b_{m_i}|^2 \ll \frac{M (\log M)^{O(1)}}{\phi(W)}.
\]	
Changing the order of summation and applying  Cauchy-Schwarz inequality once more yields 
\[
	\frac{\delta^{O(1)} N^2}{W^3}(\log N)^{-O(1)} \ll  \sum_{r \mod W}\nolimits^{*} \threesum{M<m_i\leq 2 M}{m_i\equiv b r^{-1} \mod W}{i=1,2} \biggabs{\twosum{L<l\leq 2 L}{l \equiv r \mod W} \psi(\frac{m_1l-b}{W})\overline{\psi(\frac{m_2l-b}{W})}}^2,
\]
which is the desired estimate.
\end{proof}

\begin{lemma}[Type I estimate]\label{type-i}
Let $1\leq b\leq W\leq N^{1/10}$ with $(b,W)=1$, let $(\log N)^{A}\leq 1/\delta \leq \exp((\log N)^{1/10^{10}})$ for some absolute constant $A>0$. Let $G/\Gamma$ be a nilmanifold of step $s$, degree $k$, dimension $d$ and complexity $K$, with one-dimensional vertical component. Let $F\colon G/\Gamma \to\C$ be a 1-Lipschitz function and a vertical character with frequency $\xi$ such that $|\xi|\leq K/\delta$. Suppose that $L\leq N^{2/3}$ and there are $\gg \delta^{O(1)}L$ elements $l\in [L/2,L]$ such that $(l,W)=1$ and
\[
\Biggabs{\twosum{m\leq N/L}{m\equiv b l^{-1} \mod W} F(g(\frac{ml-b}{W})\Gamma)}\geq \frac{\delta N}{WL}.
\]
Then $g$ admits a factorisation
\[
g(n)=\eps (n)\,g'(n)\,\gamma(n),
\]
where $g'$ takes values in a $(K/\delta)^{O_k(d^{O_k(1)})}$-rational subgroup of $G$ of strictly smaller step than $G$;
$\eps $ is $\bigl((K/\delta)^{O_k(d^{O_k(1)})},\,N/W\bigr)$-smooth;
and $\gamma$ is $(K/\delta)^{O_k(d^{O_k(1)})}$-rational.
\end{lemma}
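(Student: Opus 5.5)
Fix one of the $\gg \delta^{O(1)}L$ good values of $l$, and let $m_0\equiv bl^{-1}\pmod W$ be its residue. Write $m=m_0+Wt$ with $t\le N/(LW)$. Then
\[
\frac{ml-b}{W}=\frac{m_0l-b}{W}+lt=:a_l+lt,
\]
so the hypothesis says that the sequence $t\mapsto g(a_l+lt)$ has a biased average of $F$ over an interval of length $T\asymp N/(LW)$.

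\emph{Step 1: single-$l$ equidistribution.} For each good $l$, the orbit $g_l(t)\coloneqq g(a_l+lt)\Gamma$ is a polynomial sequence of degree $k$. Since $F$ is a vertical character of frequency $\xi$ with $|\xi|\le K/\delta$ and $\|F\|_{\lip}\le 1$, its average along a totally $\delta^{O(1)}$-equidistributed orbit would be small. Applying the quantitative Leibman theorem with $\mathrm{poly}(d)$ dependence, in the form of Leng's improved equidistribution \cite[Theorem~3]{Leng}, we obtain a nontrivial horizontal character $\chi_l$ with $|\chi_l|\le (K/\delta)^{O_k(d^{O_k(1)})}$ and
\[
\|\chi_l\circ g_l\|_{C^\infty[T]}\le (K/\delta)^{O_k(d^{O_k(1)})}.
\]
Rather than a full horizontal character, one can use the refined statement that yields characters on $G/[G,G]$ annihilating the obstruction to the vertical frequency. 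This is what eventually allows lowering the step rather than merely the dimension.

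\emph{Step 2: pigeonholing in $l$ and lifting to $g$.} There are only $(K/\delta)^{O(d^{O(1)})}$ possible characters, so a single $\chi$ works for $\gg (K/\delta)^{-O(d^{O(1)})}L$ values of $l$. Write $\chi\circ g(n)=\sum_i \alpha_i n^i$. The condition on $\chi\circ g_l$ says that for each $j\ge1$ the coefficient $\sum_{i\ge j}\alpha_i\binom ij a_l^{i-j}l^j$ is close to a rational with small denominator, at scale $T^{-j}$.

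This is the step I expect to be the main obstacle. In the dense case one simply uses $l\le L$ and the identity $T\cdot l\approx N/W$. Here, however, the offsets $a_l$ vary with $l$ and are of size up to $W$, and $W$ may exceed the complexity. I would proceed in two stages.
\begin{itemize}
\item First, handle the top-degree coefficient $\alpha_k l^k$. Using the polynomial Vinogradov-type lemma over many $l$ (\cite[Lemma~3.2]{Leng}, or the Green--Tao ``many dilates'' lemma), conclude $\|q\alpha_k\|_\T\ll (K/\delta)^{O(d^{O(1)})}(W/N)^k$ for some $q\le (K/\delta)^{O(d^{O(1)})}$.
\item Then proceed by downward induction on $j$. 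Having controlled the higher coefficients, the $a_l$-dependent cross terms become smooth plus rational perturbations and can be absorbed. This yields $\|q\alpha_j\|_\T\ll (\cdot)(W/N)^j$ for all $j$. Equivalently, $\|q\chi\circ g\|_{C^\infty[N/W]}$ is small.
\end{itemize}
The scale $N/W$, rather than $N$, in the smoothness is exactly what appears in the conclusion.

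\emph{Step 3: factorisation and step reduction.} Given that $\chi\circ g$ is smooth on $[N/W]$ modulo $q$, the standard factorisation lemma (\cite[Lemma~A.6]{Leng}, the analogue of Green--Tao's Proposition~9.2) gives $g=\eps g'\gamma$, where
\begin{itemize}
\item $\eps$ is $((K/\delta)^{O(d^{O(1)})},N/W)$-smooth,
\item $\gamma$ is $(K/\delta)^{O(d^{O(1)})}$-rational,
\item $g'$ takes values in $\ker\chi$, which is a rational subgroup.
\end{itemize}

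To get strictly smaller step rather than codimension one, I would iterate Steps 1–3. Each iteration either lowers the dimension or, via Leng's argument that the obstructing characters can be chosen to kill $[G,G]$-commutators against the vertical direction, passes to a subgroup on which the vertical character becomes trivial. The latter forces step reduction. The number of iterations is at most $d$, so the bounds stay of the form $(K/\delta)^{O_k(d^{O_k(1)})}$. The hypothesis $1/\delta\le\exp((\log N)^{1/10^{10}})$ ensures that all these losses remain below $N^{o(1)}$, so that the lengths $T\ge N^{1/3}/W$ are long enough for the equidistribution theorem to apply.
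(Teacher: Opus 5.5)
\textbf{Verdict: genuine gap.} The step-reduction mechanism fails quantitatively, and the fix is to replace Steps~1 and~3 by a single application of \cite[Theorem 4]{Leng}.

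Your Step~2 is sound in outline; it plays the role of the paper's appeal to \cite[Lemmas A.10--A.12]{Leng}. One correction: $|a_l|\le l\le L$, not $\le W$. The downward induction works exactly because, writing $\alpha_i=u_i/q+\theta_i$ with $|\theta_i|\ll Q(W/N)^i$, the cross terms satisfy $q\,\theta_i\binom{i}{j}a_l^{i-j}l^j\ll qQ(W/N)^iL^i\asymp qQ\,T^{-i}$. The rational parts are killed by $q$ since $a_l,l\in\Z$.

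The gap is in how you reach a subgroup of strictly smaller step. A single-character Leibman-type theorem followed by factorising into $\ker\chi$ only lowers the dimension by one. Your remedy, iterating Steps~1--3 up to $d$ times, does not keep the bounds of the form $(K/\delta)^{O_k(d^{O_k(1)})}$.
\begin{itemize}
\item Each new round must be run for $g'$ on $G'/\Gamma'$, after passing to subprogressions where $\eps$ is essentially constant and $\gamma$ is periodic.
\item It also requires conjugating by $\{\gamma_0\}$. This element depends on $l$ and on the residue class, so the Type~I setup has to be re-pigeonholed each time.
\item The complexity of $G'/\Gamma'$ is already of the size $Q_i$ of the previous output. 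The next application therefore gives $Q_{i+1}=Q_i^{O_k(d^{O_k(1)})}$, and after $d$ rounds you are at $(K/\delta)^{d^{O_k(d)}}$.
\end{itemize}
This is the classical exponential-in-dimension loss, and it is fatal here. The lemma is used with $d$ as large as $(\log(1/\eta))^{C_k}$, where such losses are no longer below $N^{o(1)}$.

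The missing idea is that the step drop must come from one application of an equidistribution theorem whose output is already step-reducing. The paper applies \cite[Theorem 4]{Leng} to $\tilde g_l=g(l\cdot+a_l)$ for each good $l$.
\begin{itemize}
\item This yields a whole family of linearly independent horizontal characters $\eta_1,\dots,\eta_t$ of size $(K/\delta)^{O_k(d^{O_k(1)})}$. The family is pigeonholed to be common to $\gg(\delta/K)^{O_k(d^{O_k(1)})}L$ values of $l$, with $\|\eta_i\circ\tilde g_l\|_{C^\infty[N/(WL)]}$ small.
\item The joint kernel $G^*=\bigcap_i\ker\eta_i$ is a rational subgroup of strictly smaller step. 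This property is algebraic (it concerns $G$, $\xi$ and the $\eta_i$), so it survives the passage from $\tilde g_l$ to $g$.
\item One then lifts all the $\eta_i$ simultaneously to $g$ on $[N/W]$; this is your Step~2.
\item Finally one factorises once with \cite[Lemma A.1]{Leng}.
\end{itemize}
The only iteration is over the step. It happens in Proposition~\ref{cor-1} and has at most $k-1$ rounds.

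Your remark in Step~1 about characters ``annihilating the obstruction to the vertical frequency'' points in this direction. Every horizontal character already lives on $G/[G,G]\Gamma$; what matters is the joint kernel of the family. But you then abandon that idea for the dimension-by-dimension iteration.
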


\begin{proof}
Make the change of variables $m = Wn + b m_W$, where $m_W$ is the unique solution to $lm_W\equiv 1\pmod W$ with $bm_W\in [W]$.
 Then $n\le N/(WL)$, and for $\gg \delta^{O(1)}L$ integers $l\in [L/2,L]$ with $(l,W)=1$ we have
\[
\Bigabs{ \sum_{n\in  [\frac{N}{WL}]} F(g(ln+\tfrac{bm_Wl-b}{W})\Gamma)}\geq \frac{\delta N}{WL}.
\]	

Set $\tilde g_l=g(l\cdot+\tfrac{bm_Wl-b}{W})$ and $g_l= g(l\cdot)$; then both are polynomial sequences in $\poly(\Z,G_\bullet)$,  and the preceding bound becomes $|\sum_{n\in [\frac{N}{WL}]} F(\tilde g_l(n)\Gamma)|\geq \frac{\delta N}{WL}$. One may apply \cite[Theorem 4]{Leng} to find linearly independent horizontal characters $\eta_1,\dots,\eta_t$ of size at most $(K/\delta)^{O_k(d^{O_k(1)})}$ such that
\[
\|\eta_i \circ \tilde g_l\|_{C^\infty[\frac{N}{WL}]}\ll (K/\delta)^{O_k(d^{O_k(1)})}
\]
 holds for $\gg (\delta/K)^{O_k(d^{O_k(1)})}L$ many $l\in [L/2,L]$. After scaling the $\eta_i$ appropriately, it follows from \cite[Lemma A.12]{Leng} that
 \[
 \|\eta_i \circ  g_l\|_{C^\infty[\frac{N}{WL}]}\ll (K/\delta)^{O_k(d^{O_k(1)})}
 \]
for $\gg (\delta/K)^{O_k(d^{O_k(1)})}L$ many $l\in [L/2,L]$. One then deduces from \cite[Lemmas A.10--A.11]{Leng} that there exists an integer $q\ll (K/\delta)^{O_k(d^{O_k(1)})}$ such that
\[
\|q\eta_i \circ g\|_{C^\infty[N/W]}\ll (K/\delta)^{O_k(d^{O_k(1)})}.
\]
Renaming $q\eta_i$ as $\eta_i$, we may now invoke \cite[Lemma A.1]{Leng} to obtain a factorisation 
\[
g=\eps g'\gamma,
\]
where $g'\in \poly(\Z,G^*)$ with $G^*=\set{g\in G\colon \eta_i(g)=0 \text{ for all }i}$ and $G_i^*=G_i\cap G^*$. The subgroup $G^*$ is $(K/\delta)^{O_k(d^{O_k(1)})}$-rational and has step strictly smaller than $s$. Moreover, $\eps $ is $\bigl((K/\delta)^{O_k(d^{O_k(1)})},\,N/W\bigr)$-smooth and $\gamma$ is $(K/\delta)^{O_k(d^{O_k(1)})}$-rational, as required.

\end{proof}

\begin{lemma}[Type II estimate]\label{type-ii}
Let $0<\delta<1/2$ and let $1\leq b\leq W\leq N^{1/10}$ with $(b,W)=1$. Let $G/\Gamma$, $F\colon G/\Gamma\to\C$, and the vertical frequency $\xi$ be as in Lemma~\ref{type-i}. Suppose that $N^{1/3} \leq L\leq N^{2/3}$ and $\delta N\leq ML\leq N$ and that we have
\[
	\frac{\delta N^2}{W^3} \leq  \sum_{r \mod W}\nolimits^{*} \threesum{M<m_i\leq 2 M}{m_i\equiv b r^{-1} \mod W}{i=1,2} \biggabs{\twosum{L<l\leq 2 L}{l \equiv r \mod W} F \bigbrac{g(\tfrac{m_1l-b}{W})\Gamma} \overline{F \bigbrac{g(\tfrac{m_2l-b}{W})\Gamma}}}^2.
\]
Then there exists a factorisation
\[
g(n)=\eps (n)\,g'(n)\,\gamma(n),
\]
such that $g'$ takes values in a $(K/\delta)^{O_k(d^{O_k(1)})}$-rational subgroup of $G$ of strictly smaller step than $G$;
$\eps $ is $\bigl((K/\delta)^{O_k(d^{O_k(1)})},\,N/W\bigr)$-smooth;
and $\gamma$ is $(K/\delta)^{O_k(d^{O_k(1)})}$-rational.
	
\end{lemma}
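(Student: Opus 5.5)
The plan is to follow the Type II argument of \cite[Section 5]{Leng}, adapted to the arithmetic progression $l\equiv r \mod W$, and to reduce to the Type I machinery already used in Lemma~\ref{type-i}. First, by pigeonholing over the $\phi(W)$ reduced classes $r$ and over the pairs $(m_1,m_2)$, we find a single class $r$ and a set of $\gg \delta W^{-2}M^2$ pairs $(m_1,m_2)$ with $m_i\equiv br^{-1}\mod W$ for which the inner sum over $l$ has size $\gg \delta^{O(1)} L/W$. We write $l=Wn+r$ with $n\le L/W$ and $m_i = Wu_i + br^{-1}_W$ as in the proof of Lemma~\ref{type-i}. Then $(m_il-b)/W$ becomes a polynomial in $n$ with leading coefficient $m_i$ plus a constant depending on $u_i,r$. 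Thus each inner sum is an average of $F(g(m_1 n + c_1)\Gamma)\overline{F(g(m_2n+c_2)\Gamma)}$ over $n\in[L/W]$.

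Second, we view $n\mapsto (g(m_1n+c_1),g(m_2n+c_2))$ as a polynomial sequence on $G\times G$, or better on the relative square $G\times_{G_{(2)}}G$ reduced modulo the diagonal central part. Since $F$ is a vertical character of frequency $\xi$, the function $F\otimes\overline F$ is a vertical character on the quotient $G^\square=G\times_{G_{s}} G/\{(z,z)\}$, whose step is at most $s$. We then apply the quantitative equidistribution theorem \cite[Theorem 4]{Leng} to each good pair. This gives horizontal characters on $G^\square$ of size $(K/\delta)^{O_k(d^{O_k(1)})}$ with small $C^\infty[L/W]$ norm along $g_{m_1,m_2}$. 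Pigeonholing over the characters, one character works for a $(\delta/K)^{O(d^{O(1)})}$ proportion of pairs. Its components give $(\eta_1,\eta_2)$ with $\|\eta_1\circ g(m_1\cdot+c_1)+\eta_2\circ g(m_2\cdot+c_2)\|_{C^\infty[L/W]}$ small.

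Third, we extract information on $g$ itself. Fixing a popular $m_2$ and varying $m_1$ over many values, we take differences to eliminate the $m_2$ term. We then apply \cite[Lemmas A.10--A.12]{Leng}, as in Lemma~\ref{type-i}. Here the key point is that the dilates $m_1$ range over a set of density $\delta^{O(1)}/W$ in $[M,2M]$ with $ML\gg\delta N$. This yields an integer $q\ll (K/\delta)^{O_k(d^{O_k(1)})}$ with $\|q\eta\circ g\|_{C^\infty[N/W]}$ small for some nontrivial horizontal character $\eta$ on $G$. That character is nontrivial on $G_{(2)}$-cosets, or otherwise kills enough of the top step; the case where it is trivial on the relevant part reduces to the vertical frequency $\xi$ being degenerate, which we handle separately.

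Finally, \cite[Lemma A.1]{Leng} converts this into the factorisation $g=\eps g'\gamma$ with $g'$ in the rational subgroup $G^*=\ker\eta$. As in \cite{Leng}, we must check that the step drops. I expect this to be the main obstacle: the characters from the product nilmanifold must be shown to give a character on $G$ that annihilates $[G,G_{s-1}]$-relevant directions, so that the step of $G^*$ is strictly smaller. Passing from the dilated sequences $g(m_i\cdot)$ on the short scale $L/W$ back to $g$ on scale $N/W$ needs the sparse-progression versions of Lemmas A.10--A.12, and uniformity in $W$ must be tracked there. I would first try Leng's argument in \cite[Section 5]{Leng} verbatim, with $W$ absorbed into the shifts $c_i$ and the $W^{-3}$ normalisation, and check that all losses are polynomial in $W$ only through the density of admissible $m_i$.
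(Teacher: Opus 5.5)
\paragraph{Overall.} Your first three steps track the paper's proof closely: pigeonholing in $r$ and in $(m_1,m_2)$, writing $l=Wn+r$, applying \cite[Theorem 4]{Leng} to the pair sequence, removing the shifts via \cite[Lemma A.12]{Leng}, fixing a popular $m_2$ and differencing in $m_1$, then a Waring-type argument and \cite[Lemma A.1]{Leng}.

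\paragraph{The gap: the step drop.} The genuine gap is at the end, exactly where you place the ``main obstacle'', and the route you sketch there would fail. You pigeonhole down to a \emph{single} character on the product, extract one nontrivial horizontal character $\eta$ on $G$, and set $G^*=\ker\eta$. The kernel of one horizontal character need not have smaller step. For example, take $G=H\times\mathbb{R}$ with $H$ the Heisenberg group and $\eta$ the $\mathbb{R}$-coordinate: then $\ker\eta=H$ is still $2$-step. Nor can a single horizontal character ``annihilate $[G,G_{s-1}]$-relevant directions'', since every horizontal character already vanishes on $[G,G]$. The step drop is a property of the \emph{joint} kernel of the whole family $\eta_1,\dots,\eta_t$ returned by \cite[Theorem 4]{Leng}. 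So you must pigeonhole over the entire tuple, not over one character.

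\paragraph{How the paper closes it.} Write $\eta_i=\eta_i^{(1)}\oplus\eta_i^{(2)}$. Run your differencing and Waring argument for \emph{every} $i$ and for \emph{both} components: fix $m_2$ and vary $m_1$ for $\eta_i^{(1)}$, and symmetrically for $\eta_i^{(2)}$. This makes all of $\|q'\eta_i^{(1)}\circ g\|_{C^\infty[N/W]}$ and $\|q'\eta_i^{(2)}\circ g\|_{C^\infty[N/W]}$ small. Then \cite[Lemma A.1]{Leng} puts $g'$ in $\widetilde G=\{h\in G\colon \eta_i^{(1)}(h)=\eta_i^{(2)}(h)=0 \text{ for all } i\}$. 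Since $\eta_i(h,\mathrm{id})=\eta_i^{(1)}(h)$, the map $h\mapsto(h,\mathrm{id})$ embeds $\widetilde G$ into $\widetilde G_1\coloneqq\bigcap_i\ker\eta_i\subseteq G\times G$. The conclusion of \cite[Theorem 4]{Leng} says $\widetilde G_1$ has lower step, so $\widetilde G$ does too.

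This works equally in your quotient by the diagonal $\{(z,z)\colon z\in G_s\}$. There the conclusion is that $s$-fold commutators of $\widetilde G_1$ lie in the diagonal, and $(z,\mathrm{id})$ is diagonal only if $z=\mathrm{id}$. No separate ``degenerate $\xi$'' case is needed.

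\paragraph{Smaller correction.} Drop the relative square. The sequence $n\mapsto(g(m_1n+c_1),g(m_2n+c_2))$ does not take values in $G\times_{G_{(2)}}G$, nor in any fibre product over $G_s$. You must work on $G\times G$, modulo the diagonal copy of $G_s$. That quotient is what makes the vertical torus one-dimensional, so that \cite[Theorem 4]{Leng} applies to $F\otimes\overline{F}$.
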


\begin{proof}
By pigeonhole principle there exist $\gg \delta^{O(1)}W$ residues $r\in [W]$ with $(r,W)=1$, and for each such $r$ there are $\gg \delta^{O(1)}(M/W)^2$ pairs $(m_1,m_2)\in [M,2M]^2$ with $m_1\equiv m_2\equiv br^{-1} \mod W$ such that
\[
\delta^{O(1)}L/W \ll \Bigabs{\twosum{L<l\leq 2 L}{l\equiv r \mod W} F \bigbrac{g(\tfrac{m_1l-b}{W})\Gamma} \overline{F \bigbrac{g(\tfrac{m_2l-b}{W})\Gamma}}}.
\]
Write $l=r+Wn$, so that $L/W<n\leq 2 L/W$, and the preceding inequality becomes
\[
\delta^{O(1)}L/W \ll \Bigabs{ \sum_{L/W<n\leq 2 L/W}  F \bigbrac{g(m_1n+\tfrac{m_1r-b}{W})\Gamma} 
\overline{ F \bigbrac{g(m_2n+\tfrac{m_2r-b}{W})\Gamma} } }
\]
for $\gg \delta^{O(1)}W$ choices $r$ and $\gg \delta^{O(1)}(M/W)^2$ such pairs $(m_1,m_2)$. 

For each such triple $(m_1,m_2, r)$, define the polynomial sequence
\[
\tilde g_{m_1,m_2,r} =\bigbrac{ g(m_1\cdot + \tfrac{m_1r-b}{W}), g(m_2\cdot +\tfrac{m_2r-b}{W})},
\]
and  for such a pair $(m_1,m_2)$ define $g_{m_1,m_2} =\bigbrac{g(m_1\cdot),g(m_2\cdot)}$.
Then  $\tilde g_{m_1,m_2,r}, g_{m_1,m_2} \in\poly(\Z,G^2)$. Applying \cite[Theorem 4]{Leng} to $\tilde g_{m_1,m_2,r}$, we obtain (for each such triple) linearly independent horizontal characters $\eta_1,\dots,\eta_t$ on $G\times G$ of size at most $(K/\delta)^{O_k(d^{O_k(1)})}$ and such that
\[
\|\eta_i \circ \tilde g_{m_1,m_2,r}\|_{C^\infty [L/W]} \ll (K/\delta)^{O_k(d^{O_k(1)})}.
\]
By pigeonhole principle, using \cite[Lemma A.12]{Leng} and scaling up $\eta_i$ if necessary, we may assume that
\[
\|\eta_i \circ  g_{m_1,m_2}\|_{C^\infty [L/W]} \ll (K/\delta)^{O_k(d^{O_k(1)})}
\]
holds for all $\eta_1,\dots,\eta_t$ and  for at least $(\delta/K)^{O_k(d^{O_k(1)})}M^2/W$ pairs $(m_1,m_2)\in [M,2M]^2$ with $m_1\equiv m_2 \mod W$.

Write each $\eta_i$ as $\eta_i=\eta_i^{(1)}\oplus \eta_i^{(2)}$, where $\eta_i^{(1)}, \eta_i^{(2)}\colon G\to \T$ are horizontal characters,  not both zero. Write
 \[
\eta_i^{(1)}\circ g(n)=\sum_{j=0}^k \beta_{j,i}\,n^j,
\qquad
\eta_i^{(2)}\circ g(n)=\sum_{j=0}^k \beta'_{j,i}\,n^j.
\]
Then
\[
\eta_i\circ g_{m_1,m_2}(n)
=
\sum_{j=0}^k \bigl(\beta_{j,i}m_1^j+\beta'_{j,i}m_2^j\bigr)n^j.
\]
By \cite[Lemma A.10]{Leng}, there exists an integer $q\ll_k 1$ such that for all $1\le i\le t$ and $1\le j\le k$,
\[
\| q(\beta_{j,i}m_1^j+\beta_{j,i}'m_2^j)\|_\T \ll (\tfrac{L}{W})^{-j} \| \eta_i \circ g_{m_1,m_2}\|_{C^\infty[L/W]}\ll (K/\delta)^{O_k(d^{O_k(1)})}(W/L)^j
\]	
 for at least $(\delta/K)^{O_k(d^{O_k(1)})}M^2/W$ pairs $(m_1,m_2)\in [M,2M]^2$ with $m_1\equiv m_2 \mod W$. Fix $i$ and choose an index with $\eta_i^{(1)}\neq 0$ (the case $\eta_i^{(2)}\neq 0$ is analogous). 
 By pigeonholing in $m_2$,
we may select some $m_2\in[M,2M]$ that occurs in $\gg (\delta/K)^{O_k(d^{O_k(1)})}M$ admissible pairs $(m_1,m_2)$, noting that there are only $M/W$ choices of $m_2$ under the assumption $m_1\equiv m_2 \mod W$.
Subtracting the relation for two such $m_1$'s  yields
\[
\|q\,\beta_{j,i}(m_1^j-m_1'^j)\|_{\T}
\ll 
(K/\delta)^{O_k(d^{O_k(1)})}\Bigl(\frac{W}{L}\Bigr)^j,
\]
and hence one obtains
\[
\|qm^j\beta_{j,i}\|_{\T} \ll (K/\delta)^{O_k(d^{O_k(1)})}(W/L)^j
\]
 for $\gg (\delta/K)^{O_k(d^{O_k(1)})}M$ integers $m\in [-M,M]$. A standard Waring-type argument then gives an integer $q'\ll (K/\delta)^{O_k(d^{O_k(1)})}$ such that
 \[
\|q'\beta_{j,i}\|_{\T}
\ \ll\
(K/\delta)^{O_k(d^{O_k(1)})}\Bigl(\frac{W}{ML}\Bigr)^j
\qquad (j=1,\dots,k).
\]
By \cite[Definition 2.2]{Leng} and the assumption $\delta N\leq ML\leq N$, this implies
\[
\|q'\eta_i^{(1)}\circ g\|_{C^\infty[N/W]}
\ \ll\ 
(K/\delta)^{O_k(d^{O_k(1)})}.
\]
 The desired factorisation now follows from~\cite[Lemma~A.1]{Leng} for some polynomial sequence $g'$ taking values in $\tilde{G}\coloneqq \bigcap_{j=1}^t \textnormal{ker}(\eta_j)$.
It remains to justify the step drop. 

Recall that $\eta_i=\eta_i^{(1)}\oplus \eta_i^{(2)}$ as above, so that
$\eta_i(x,y)=\eta_i^{(1)}(x)+\eta_i^{(2)}(y)$.
Let
\[
\widetilde G_1 \coloneqq \{(x,y)\in G\times G:\ \eta_i(x,y)=0\ \text{for all }1\le i\le t\}.
\]
By the conclusion of \cite[Theorem 4]{Leng} (as used in the proof of \cite[Proposition~5.4]{Leng}),
$\widetilde G_1$ is a rational subgroup of $G\times G$ of step at most $s-1$, where $s$ is the step of $G$.
Note that
\[
\widetilde G = \{h\in G:\ \eta_i^{(1)}(h)=\eta_i^{(2)}(h)=0\ \text{for all }1\le i\le t\}.
\]
Then for any $h\in \widetilde G$ and any $i$ we have
$\eta_i(h,\mathrm{id})=\eta_i^{(1)}(h)+\eta_i^{(2)}(\mathrm{id})=0$,
so $(h,\mathrm{id})\in \widetilde G_1$; hence $\widetilde G\times\{\mathrm{id}\}\subseteq \widetilde G_1$.
Since subgroups of an $(s-1)$-step nilpotent group are again $(s-1)$-step, it follows that
$\widetilde G\times\{\mathrm{id}\}$ has step $\le s-1$, and therefore (via the embedding $h\mapsto (h,\mathrm{id})$)
$\widetilde G$ itself has step $\le s-1$, i.e. strictly smaller than the step of $G$.
This is the required step reduction, which completes the proof.
\end{proof}

\bibliographystyle{plain}

\bibliography{md_references.bib}

\end{document}